\newtheorem{theorem}{Theorem}[section]
\newtheorem{lemma}[theorem]{Lemma}
\newtheorem{prop}[theorem]{Proposition}
\newtheorem{cor}[theorem]{Corollary}
\newtheorem{fact}[theorem]{Fact}
\theoremstyle{definition}
\newtheorem{defi}[theorem]{Definition}
\newtheorem{example}[theorem]{Example}
\theoremstyle{remark}
\newtheorem{remark}[theorem]{Remark}
\numberwithin{equation}{section}
\newcommand{\N}{{\mathbb N}}
\newcommand{\rr}{{\mathbb R}}
\newcommand{\rd}{{\mathbb R^d}}
\newcommand{\E}{{\mathbb E}}
\newcommand{\DOA}{\operatorname{DOA}}
\newcommand{\RV}{\operatorname{RV}}
\newcommand{\eqd}{\stackrel{d}{=}}
\def\cal{\mathcal}
\date{November 17, 2014}
\keywords{regular variation; regular variation on cones; max-stable; implicit max-stable; implicit order statistics}
\begin{document}

\title{Implicit Extremes and Implicit Max--Stable Laws}

\author{Hans-Peter Scheffler$^\dagger$}\thanks{$\dagger$ University of Siegen, Germany, {\tt scheffler@mathematik.uni-siegen.de}}

\author{Stilian Stoev$^\ddagger$}\thanks{$\ddagger$ University of Michigan, Ann Arbor, USA, {\tt sstoev@umich.edu}}

\maketitle

\begin{abstract} Let $X_1,\cdots,X_n$ be iid random vectors and $f\ge 0$ be a non--negative function. 
Let also $k(n) = {\rm Argmax}_{i=1,\cdots,n} f(X_i)$. We are interested in the distribution of 
$X_{k(n)}$ and their limit theorems. In other words, what is the distribution the random vector where
a function of its components is extreme. This question is motivated by 
a kind of inverse problem where one wants to determine the extremal behavior of $X$ when 
only explicitly observing $f(X)$. We shall refer to such types of results as to {\em implicit extremes}.
It turns out that, as in the usual case of explicit extremes, all limit {\em implicit extreme value} laws are {\em implicit 
max--stable.} We characterize the regularly varying implicit max--stable laws in terms of their spectral and stochastic
representations.  We also establish the asymptotic behavior of {\em implicit order statistics} relative to a
given homogeneous loss and conclude with several examples drawing connections to prior work involving regular 
variation on general cones.  
%The theory is illustrated with applications to novelty detection and multiple hypothesis testing \fbox{TO DO!}
\end{abstract}

\section{Introduction}
On January 21 in 1959, Ohio experienced an extreme flood, which was the most destructive such event since 1913
claiming 16 lives and \$100 million in damages.  
The root cause of this event was not entirely due to extreme precipitation. It was essentially due 
to {\em rain on frozen ground}, i.e., cold ground--freezing conditions combined with a rare case of moderately 
intensive rainfall due to a warm front 
\cite{floods:1959}.  In hydrology, it is well understood that floods are not simply caused by 
extreme precipitation but, in fact, involve a complex combination of factors including ground saturation, snow--melt, 
precipitation intensity, and duration.  In such and many other applications {\em extreme loss} events
are caused by unusual combination of factors, the marginal values of which may or may not be {\em extreme} but 
their coordinated effect is extreme.  
Such type of phenomena motivated us to focus on {\em extreme loss events} rather than extreme values and  
develop theory that helps understand and model the joint behavior of the factors leading to extreme losses.  

More precisely, let $f: \mathbb R^d \to [0,\infty)$ be a non--negative function modeling the loss $f(x)$ associated with 
the values $x=(x_i)_{i=1}^d$ of $d$ factors.  Let also $X$ be a random vector in $\mathbb R^d$, 
modeling the joint behavior of these $d$ factors. Assuming that $X_1,\ldots,X_n$ are independent copies (measurements)
of $X$, we are interested in the behavior of $X_{k(n)}$ leading to maximal loss.  Namely, let
$$
k(n) := {\rm Argmax}_{k=1,\ldots,n} f(X_k),
$$
where in the case of ties $k(n)$ is taken as the smallest index yielding the maximum.

In this paper, our main goal is to establish the asymptotic behavior of $X_{k(n)}$ under appropriate normalization.
Operationally, $X_{k(n)}$ may be viewed as the {\em $f$--implicit maximum} of the $X_k$'s, i.e., the observation 
leading to maximal $f$--loss. As illustrated,  the events leading to extreme losses $f(X)$ are of utmost importance 
in practice.  Thus, given a loss functional of interest, the limit distribution of $X_{k(n)}$, as $n\to\infty$ provides 
a natural fundamental model for the joint dependence structure of the factors leading to such extremes.

Here, we focus on the case of {\em homogeneous} losses (see \eqref{eq4}, below).  In Theorem \ref{thm1},
under the assumption that $X$ is {\em regularly varying} on the cone $\mathbb R^d\setminus\{f=0\}$, 
we show that
\begin{equation}\label{e:intro-X-to-Y}
\frac{1}{a_n} X_{k(n)}  \Longrightarrow Y,\ \ \mbox{ as }n\to\infty,
\end{equation}
for some normalizing sequence $a(n)>0$, where `$\Rightarrow$ denotes convergence in distribution.

The limit laws arising in \eqref{e:intro-X-to-Y} will be referred to as {\em $f$-implicit extreme value distributions}. 
As anticipated from the classic theory of
(explicit) multivariate extremes, the limits in \eqref{e:intro-X-to-Y} have certain stability property with respect to
the operation of {\em implicit maxima}. Indeed, if $Y_k,\ k=1,\ldots, n$ are independent copies of $Y$, then
it turns out that, for all $n$, exists $a(n)>0$ such that 
\begin{equation}\label{e:intro-imp-max-stab}
 {\rm Argmax}_{Y_k,\ k=1,\ldots,n} f(Y_k) \stackrel{d}{=} a(n) Y. 
\end{equation}
Random vectors satisfying \eqref{e:intro-imp-max-stab} will be referred to as {\em $f$-implicit max--stable}. Our 
first result (Theorem \ref{thm1}) shows that all implicit extreme value laws are in fact implicit max--stable. The converse
is also true.  In Theorem \ref{thm:DOA_char}, 
we characterize the implicit max--domain of attraction of all $f$-implicit max--stable laws 
associated with positive and continuous homogeneous loss functions $f$.  It turns out that these laws are precisely 
the regularly varying distributions on the cone $\mathbb R^d\setminus\{f=0\}$.  This result shows that the generalized 
notion of regular variation on cones is a natural technical and conceptual approach to implicit extremes. 
The notion of regular variation on general cones originates from the works on hidden regular variation 
of Resnick and Maulik \cite{resnick:maulik:2004}. It is briefly defined and reviewed in Section \ref{sec:RV} 
below from the perspective of generalized polar coordinates.  More details and further applications or regular 
variation on cones can be found in the recent work of \cite{lindskog:resnick:roy:2013}.

In Section \ref{sec:examples}, we discuss several examples that unveil connections to prior work by 
Ledford and Tawn \cite{ledford:tawn:1996}, Draisma {\em et al} \cite{draisma:drees:ferreira:dehaan:2004}, and
de Haan and Zhou \cite{dehaan:zhou:2011}.  The recent work of Dombry and Ribatet \cite{dombry:ribatet:2014}
on $\ell$--Pareto processes involves very similar ideas to ours. It focuses on the limit behavior of a process $X$ conditionally on the event that a certain loss 
functional $\ell(X)$ is extreme.  In this sense, Dombry and Ribatet study implicit {\em exceedances} whereas
we study implicit {\em maxima}. Technically, the two approaches: {\em implicit extremes} and {\em implicit exceedances}
lead to different limits and contexts of application but they both have important virtues. Conceptually, implicit 
extremes correspond to the study of (implicit) maxima, while implicit exceedances to (implicit) peaks-over-threshold 
as defined by a suitable loss functional.

\medskip
\noindent{\em The paper is structured as follows.} In Section \ref{sec:prelim}, we start with 
the problem formulation and give a key technical lemma.  In Section \ref{sec:RV}, we review
regular variation on cones in $\mathbb R^d$ using generalized polar coordinates.  We provide
a disintegration formula for the measure of regular variation via its spectral measure on a
(generalized) unit sphere. In Section \ref{sec:imp_ext}, we establish limit theorems for implicit extremes 
under regular variation condition. We also give stochastic representations of the limit laws, where the
disintegration formula plays an important role.  The implicit maximum domain of attraction is characterized
in Section \ref{sec:DA}.  In Section \ref{sec:o-stat}, we define implicit order statistics relative to a given loss and
establish their asymptotic behavior.  We end with several examples and discuss related work 
in Section \ref{sec:examples}.  Some technical proofs and auxiliary results are given in the Appendix.

\section{Preliminaries} \label{sec:prelim}

Let $X_i=(X_i^{(1)},\dots,X_i^{(d)})$ be iid $\rr^d$-valued random vectors. Moreover, let $f : \rr^d \to\rr_+$ be 
measurable. For $n\geq 1$ define 
\begin{equation}
 \label{e:k(n)-def}
  k(n)=\operatorname{argmax}\bigl\{f(X_1),\dots,f(X_n)\bigr\} 
\end{equation}
so that
\begin{equation}
 \label{e:X-k(n)-def}
  f(X_{k(n)})=\max\bigl\{f(X_1),\dots,f(X_n)\bigr\}.
\end{equation}
In the case of ties, $k(n)$ is taken as the smallest index for which the maximum is attained.

We are interested in the distribution of $X_{k(n)}$ and their limit theorems. In other words, what is the joint distribution 
of the components of the random vector where a function of its components is extreme. Broadly speaking, this is 
motivated by a kind of inverse problem where one wants to determine the extremal behavior of $X$ when 
only explicitly observing $f(X)$. This is why we shall refer to such types of results as to {\em implicit extremes}.

\begin{lemma}\label{lem1} Let $G(y):=P(f(X)\le y)$ be the distribution function of $f(X)$,
where $X\eqd X_1$. Then, for all measurable $A\subset\rr^d$ we have
\begin{equation}\label{eq1-new}
 n\int_A G(f(x)-)^{n-1}\,P_X(dx) \le   P\bigl\{X_{k(n)}\in A\bigr\}\le n\int_A G(f(x))^{n-1}\,P_X(dx).
\end{equation}
In particular, if $G$ is continuous, then
\begin{equation}\label{eq2-new}
P_{X_{k(n)}}(dx)=n G(f(x))^{n-1}\,P_X(dx) \equiv n P\bigl\{f(X)\leq f(x)\bigr\}^{n-1}\,P_X(dx).
\end{equation}
\end{lemma}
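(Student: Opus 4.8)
The plan is to condition on which index attains the maximum of $f(X_1),\dots,f(X_n)$ and exploit the iid structure. Fix a measurable set $A\subset\rr^d$. By the tie-breaking convention, the event $\{X_{k(n)}\in A\}$ is the disjoint union over $i=1,\dots,n$ of the events $E_i := \{X_i\in A\} \cap \{f(X_j) < f(X_i)\text{ for } j<i\} \cap \{f(X_j)\le f(X_i)\text{ for } j>i\}$. By independence and identical distribution, conditioning on the value $X_i = x$ gives
\begin{equation*}
P(E_i) = \int_A P\{f(X)<f(x)\}^{\,i-1}\, P\{f(X)\le f(x)\}^{\,n-i}\, P_X(dx)
 = \int_A G(f(x)-)^{\,i-1} G(f(x))^{\,n-i}\, P_X(dx),
\end{equation*}
using $G(y-) = P(f(X)<y)$. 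Summing over $i=1,\dots,n$ and observing that $G(f(x)-)\le G(f(x))$ pointwise, each summand is bounded below by $\int_A G(f(x)-)^{\,n-1}\,P_X(dx)$ and above by $\int_A G(f(x))^{\,n-1}\,P_X(dx)$; since there are $n$ terms, this yields \eqref{eq1-new}.

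The only subtlety to check carefully is the measurability of the sets $E_i$ and the correctness of the conditioning step: $f$ is assumed measurable, so $x\mapsto G(f(x))$ and $x\mapsto G(f(x)-)$ are measurable (the latter because $G(y-) = \lim_{m} G(y-1/m)$ is a pointwise limit of measurable functions, monotone in $y$), hence the integrals make sense; and the factorization of $P(E_i)$ is just Fubini/Tonelli applied to the product measure $P_X^{\otimes n}$, integrating out the coordinates $X_j$, $j\ne i$, for fixed $X_i=x$. I would write this as: $P(E_i) = \int_A \big(\prod_{j<i}P\{f(X_j)<f(x)\}\big)\big(\prod_{j>i}P\{f(X_j)\le f(x)\}\big)P_X(dx)$, which is unambiguous.

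For the second statement, if $G$ is continuous then $G(f(x)-) = G(f(x))$ for every $x$, so the lower and upper bounds in \eqref{eq1-new} coincide and equal $n\int_A G(f(x))^{n-1}\,P_X(dx)$ for every measurable $A$; hence the measure $P_{X_{k(n)}}$ has density $n\,G(f(x))^{n-1}$ with respect to $P_X$, which is exactly \eqref{eq2-new}. Alternatively one can note that continuity of $G$ forces $P\{f(X_i)=f(X_j)\}=0$ for $i\ne j$, so ties occur with probability zero and the tie-breaking rule is irrelevant; then all $n$ events $E_i$ have equal probability $\int_A G(f(x))^{n-1}P_X(dx)$ and one sums.

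I do not anticipate a genuine obstacle here — the result is essentially a bookkeeping exercise. The one place to be careful is to use the strict inequality $f(X_j) < f(X_i)$ for indices $j<i$ (reflecting that earlier indices win ties) versus the non-strict $f(X_j)\le f(X_i)$ for $j>i$, since this is precisely what produces the asymmetric bounds $G(f(x)-)$ and $G(f(x))$ and thus the two-sided estimate rather than an identity in the general (possibly atomic) case.
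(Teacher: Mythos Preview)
Your proof is correct and follows essentially the same approach as the paper: decompose $\{X_{k(n)}\in A\}$ into the disjoint events $\{k(n)=i,\ X_i\in A\}$, use the iid structure to integrate out the coordinates $j\ne i$, and then squeeze each term between the $G(f(x)-)^{n-1}$ and $G(f(x))^{n-1}$ integrals. The only cosmetic difference is that you first compute the exact expression $\int_A G(f(x)-)^{i-1}G(f(x))^{n-i}\,P_X(dx)$ and then bound the integrand, whereas the paper bounds the events before integrating; your added remarks on measurability and the alternative ``no-ties'' argument for the continuous case are correct and not in the paper, but do not change the substance.
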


\begin{proof} We have that
\begin{eqnarray}\label{e:lem1-1}
& & P\bigl\{X_{k(n)}\in A\bigr\} =\sum_{i=1}^n P\bigl\{X_i\in A, k(n)=i\bigr\} \nonumber\\ 
 & &\quad =\sum_{i=1}^n P\bigl\{X_i\in A, f(X_j)< f(X_i),\ 1\le j< i\ \text{ and } f(X_j)\le f(X_i), \ i<j\le n\bigr\}.
\end{eqnarray}  
Each term in the above sum is bounded above by 
$$
P\bigl\{X_i\in A, f(X_j)\leq f(X_i)\ \text{for all $j\neq i$}\bigr\}.
$$
Therefore, by using the fact that the $X_i$s are iid, we obtain 
\begin{eqnarray*}
P\bigl\{X_{k(n)}\in A\bigr\} &\leq &
% P\bigl\{X_i\in A, f(X_j)\leq f(X_i)\ \text{for all $j\neq i$}\bigr\} 
%= & \int_A P\bigl\{f(X_j)\leq f(X_i)\ \text{for all $j\neq i$} | X_i=x\bigr\}\, P_{X_i}(dx) \\
 n \int_A P\bigl\{f(X_j)\leq f(x)\ \text{for all $j\neq i$} | X_i=x\bigr\}\,  P_X(dx) \\
% &=&  n\int_A P\bigl\{f(X_j)\leq f(x)\ \text{for all $j\neq i$}\bigr\}\, P_X(dx) \\
 &=&  n \int_A P\bigl\{f(X)\leq f(x)\bigr\}^{n-1}\,P_X(dx) \\
 &=& \int_A G(f(x))^{n-1}\,P_X(dx).
\end{eqnarray*}
This yields the upper bound in \eqref{eq1-new}. Similarly, each term of the sum in the 
right-hand side  of \eqref{e:lem1-1} is bounded below by
\begin{eqnarray*}
 P\bigl\{X_i\in A, f(X_j)< f(X_i)\ \text{for all $j\neq i$}\bigr\} 
%= & \int_A P\bigl\{f(X_j)\leq f(X_i)\ \text{for all $j\neq i$} | X_i=x\bigr\}\, P_{X_i}(dx) \\
%= & \int_A P\bigl\{f(X_j)\leq f(x)\ \text{for all $j\neq i$} | X_i=x\bigr\}\,  P_X(dx) \\
 &=&  \int_A P\bigl\{f(X)< f(x)\bigr\}^{n-1}\,P_X(dx) \\
 &=& \int_A G(f(x) - )^{n-1}\,P_X(dx),
\end{eqnarray*}
which completes the proof of \eqref{eq1-new}.
\end{proof}

\section{Implicit extreme value laws}\label{sec1}

In this section, we establish limit theorems for $X_{k(n)}$ in \eqref{e:X-k(n)-def}. 
The emerging limits will be referred to as implicit extreme value distributions.
To this end, we need to impose some assumptions on $X$ and $f$. Since we are concerned with 
multivariate extreme value theory on $\rr^d$, it is natural to work in the context of multivariate regular variation.  
We shall need, however, a slight extension, which considers this notion over general {\it sub-cones} of 
$\overline\rr^d$. 

\subsection{Regular variation on cones}\label{sec:RV}  This exposition is motivated by the fundamental concept of {\em hidden regular variation}
pioneered by Resnick and Maulik \cite{resnick:maulik:2004} (see also p.\ 324 in \cite{resnick:2007}). The recent work of 
\cite{lindskog:resnick:roy:2013} develops abstract and far-reaching theory in the context of metric spaces. The following presentation 
is tailored to our needs.

Let $\overline \rr$ denote the extended Real line $[-\infty,\infty]$. The topology in $\overline\rr$ is generated by the 
usual class of open sets in $\rr$ along with the open neighborhoods of $\pm\infty$ of the type $(a,\infty]$ and 
$[-\infty,a),\ a\in\rr$. Thus $\overline\rr$ becomes compact.

Let also $\overline\rr^d$ be the Cartesian $d$-power of the extended Real line, equipped with the product topology. The 
space $\overline \rr^d$ is compact by Tichonoff's theorem. It is also separable and complete with respect to the metric
\begin{equation}\label{e:rho}
 \rho(x,y) := \sum_{i=1}^d r(x_i,y_i),\ \ x = (x_i)_{i=1}^d, y = (y_i)_{i=1}^d\in \overline{\rr}^d,
\end{equation}
where $r(x,y):= |{\rm atan}(x) - {\rm atan}(y)|$, where ${\rm atan}(\pm \infty) := \pm \pi/2$.
In fact, $\overline \rr^d$ is homeomorphic to the compact interval 
$[-\pi/2,\pi/2]^d$ equipped with the usual topology, where the map $x\mapsto {\rm atan}(x)$ taken 
coordinate-wise is one homeomorphism, for example. 
 
The classic notion of multivariate regular variation involves  the `punctured space' $\overline \rr^d \setminus\{0\}$. In our context, it is 
convenient to remove an entire cone rather than just the origin. Recall that $D\subset \overline\rr^d$ is said to be a (positive) 
cone, if  $\lambda D \subset D$, for all $\lambda >0$.

Let $D$ be a {\it closed positive cone} and consider the punctured space $\overline \rr^d_D:= \overline \rr^d\setminus D$, equipped with the 
relative topology.  As expected, we have the following characterization of compacts.

\begin{fact}\label{fact:compact} A set $F\subset \overline \rr^d_D$ is compact if and only if it is closed and
bounded away from $D$, that is,  $F \subset \overline \rr^d \setminus U$, where $U \supset D$ is 
an open neighborhood  of $D$ in $\overline \rr^d$. 
\end{fact}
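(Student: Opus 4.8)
The plan is to characterize compactness in $\overline{\rr}^d_D$ by combining the ambient compactness of $\overline{\rr}^d$ (Tichonoff) with the observation that $\overline{\rr}^d_D$ is an open subset of a compact Hausdorff space, hence only locally compact, and that its compact subsets are exactly those that stay a positive distance from the deleted cone $D$.

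First I would prove the ``if'' direction. Suppose $F\subset\overline{\rr}^d_D$ is closed in the relative topology and bounded away from $D$, i.e.\ $F\subset\overline{\rr}^d\setminus U$ for some open $U\supset D$. Then $\overline{\rr}^d\setminus U$ is a closed subset of the compact space $\overline{\rr}^d$, hence compact. Moreover $F$ is closed in $\overline{\rr}^d_D$; since $F\subset\overline{\rr}^d\setminus U\subset\overline{\rr}^d_D$ and the relative topology on $\overline{\rr}^d\setminus U$ inherited from $\overline{\rr}^d_D$ agrees with that inherited from $\overline{\rr}^d$, the set $F$ is in fact closed in $\overline{\rr}^d\setminus U$, hence a closed subset of a compact set, hence compact. (One may phrase this via sequences using the metric $\rho$: any sequence in $F$ has a $\rho$-convergent subsequence in $\overline{\rr}^d$ with limit in $\overline{\rr}^d\setminus U\subset \overline{\rr}^d_D$, and closedness of $F$ puts the limit in $F$.)

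Next I would prove the ``only if'' direction, which I expect to be the main obstacle. Let $F\subset\overline{\rr}^d_D$ be compact. Compactness is preserved by continuous maps and is intrinsic, so $F$ is compact as a subset of $\overline{\rr}^d$ as well; since $\overline{\rr}^d$ is Hausdorff, $F$ is closed in $\overline{\rr}^d$, hence closed in $\overline{\rr}^d_D$. It remains to produce the open neighborhood $U\supset D$ with $F\cap U=\emptyset$. Here I would use the metric $\rho$ from \eqref{e:rho}: since $F$ and $D$ are disjoint, $F$ is compact, and $D$ is closed, the distance $\delta:=\inf\{\rho(x,y): x\in F,\ y\in D\}$ is attained (the map $x\mapsto \rho(x,D):=\inf_{y\in D}\rho(x,y)$ is continuous on the compact set $F$, and $D$ being closed forces $\rho(x,D)>0$ for each $x\in F$), so $\delta>0$. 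Then $U:=\{z\in\overline{\rr}^d: \rho(z,D)<\delta\}$ is open, contains $D$, and is disjoint from $F$, giving $F\subset\overline{\rr}^d\setminus U$ as required. The only subtlety is confirming that ``$\rho(x,D)$ attained and positive on a compact set'' yields a uniform positive lower bound; this is exactly the standard compactness argument (a continuous positive function on a compact set has positive minimum), and the completeness/separability of $(\overline{\rr}^d,\rho)$ noted in the text guarantees the infimum defining $\rho(x,D)$ behaves well. Finally I would remark that the cone structure of $D$ plays no role in this fact — only that $D$ is closed — so the statement holds for any closed $D$.
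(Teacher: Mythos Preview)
Your proof is correct. Both directions are sound, and your remark that only closedness of $D$ is used (not the cone property) is accurate.

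The paper's proof reaches the same conclusion by slightly different routes. For the ``if'' direction, rather than invoking ``closed subset of a compact set,'' the paper works directly with the open-cover definition: given a cover $\{V_n\setminus D\}$ of $F$, it adjoins the open set $U\supset D$ to obtain a cover of the compact set $F\cup D$ in $\overline{\rr}^d$, extracts a finite subcover, and then drops $U$ (since $F\cap U=\emptyset$). For the ``only if'' direction, instead of minimizing the continuous function $x\mapsto\rho(x,D)$ over the compact $F$, the paper covers $F$ by the nested open sets $U_\epsilon=\{\rho(\cdot,D)>\epsilon\}$ and uses compactness to find a single $U_{\epsilon_0}\supset F$. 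These are two standard packagings of the same idea; your metric-space arguments (closed-in-compact, positive minimum of a continuous positive function) are arguably a bit cleaner, while the paper's open-cover arguments stay closer to the topological definitions. Neither approach offers a real generality advantage over the other.
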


\noindent The proof is given in the appendix. %% This is similar to Proposition 6.1 on page 171 of \cite{resnick:2007}.
We equip $\overline\rr^d_D$ with the Borel $\sigma$-algebra generated by all open sets. We shall consider Radon measures on 
$\overline\rr^d_D$, i.e.\ those that are finite on all compacts. Since $\overline \rr^d_D$ can be represented as 
a countable union of compacts, the Radon measures are $\sigma$-finite. Recall that the Radon measures  $\nu_n$ converge
vaguely to another measure $\nu$, written
$$
\nu_n\stackrel{v}{\to}v,\ \ \mbox{ as }n\to\infty,
$$
if and only if $\int h d\nu_n \to \int h d\nu,\ n\to\infty$, for all continuous $h:\overline \rr^d_D\to \rr$
that vanish outside some compact set in $\overline \rr^d_D$. The limit $\nu$ is necessarily Radon.

\begin{defi}\label{d:RV-D} Let $D$ be a closed cone in $\overline\rr^d$. A random vector $X$ in $\rr^d$ 
is said to be regularly varying on $\rr^d_D:= \rr^d\setminus D$ with exponent $\alpha>0$, if there exists
a non-trivial Radon measure $\nu$ on $\overline\rr^d_D$ supported on $\rr^d_D$ 
and a regularly varying sequence $a_n>0$ with exponent $1/\alpha$, such that 
\begin{equation}\label{e:d:RV-D}
 n P(a_n^{-1}X \in \cdot) \stackrel{v}{\longrightarrow} \nu,\ \ \mbox{ as }n\to\infty.
\end{equation}
In this case, we write $X \in RV_\alpha(\{a_n\},D,\nu)$ or sometimes simply $X\in RV_\alpha(D,\nu)$.
\end{defi}

%
%
% 
%\smallskip
%\noindent We shall assume moreover that $\nu$ is supported on $\rr^d\setminus D$, i.e.\ 
% $\nu(\overline\rr^d \setminus %(\rr^d\cup D)) =0$.
%
%
Observe that the vague convergence in \eqref{e:d:RV-D} involves measures defined on 
$\overline\rr^d_D$ that vanish on the set of infinite points $\overline \rr^d_D \setminus \rr^d.$
In the case when the {\em exceptional cone} is $D = \{0\}$, one recovers the usual notion 
of multivariate regular variation.

\begin{remark} The above definition is closely related and in fact inspired
by the fundamental concept of {\em hidden regular variation} of Resnick and Maulik 
(see e.g.\ p.\ 324 in \cite{resnick:2007}). Our definition, however, does not involve multiple cones 
and it does not require, in particular, that $X$ be
multivariate regularly varying on $\rr^d_{\{0\}}$. In this sense, regular variation on cones is both more basic and less restrictive
than hidden regular variation. For a general treatment and several equivalents to the above Definition \ref{d:RV-D}, 
see \cite{lindskog:resnick:roy:2013} and also Proposition \ref{p:RV-polar} below.
\end{remark}

\medskip
The limit $\nu$ in \eqref{e:d:RV-D} has the scaling property  
\begin{equation}\label{e:nu-scaling}
\nu(\lambda \cdot) = \lambda^{-\alpha} \nu(\cdot ),\ \ \mbox{ for all $\lambda>0$ }
\end{equation}
(see e.g.\ Theorem 3.1 in \cite{lindskog:resnick:roy:2013}).
As in the classical case, \eqref{e:nu-scaling} yields a {\it disintegration formula} 
for $\nu$ involving radial and angular (or spectral) components. Special care needs to be taken, however, in defining the 
unit sphere. One may take as the unit sphere the boundary of any star-shaped domain containing $D$ in its interior 
(in $\overline \rr^d$). In practice, however, it is easier to derive it from suitable generalized polar coordinates 
as follows.

\begin{defi}[polar coordinates in $\rr^d\setminus D$] \label{d:polar-coors}
Let $D$ be a closed cone in $\overline\rr^d$. Let also $\tau:\overline\rr^d \to [0,\infty]$ be a continuous 
function such that $\{\tau = 0\} = D$ and $\tau(x)<\infty$ for all $x\in \rr^d$.  We shall assume also that $\tau$ is
$1$-homogeneous, that is,  $\tau (\lambda x) = \lambda \tau (x)$ for all $\lambda> 0$ and $x\in \overline \rr^d$. 
For $x\in \overline\rr^d\setminus (D\cup\{\tau = \infty\}) \supset \rr^d\setminus D$,
its {\em polar coordinates} are defined as 
\begin{equation}\label{e:polar-coors}
 (\tau,\theta):= (\tau(x), x/\tau(x)),
 \end{equation}
where $\tau$ is referred to as the radial and $\theta$ is the angular component of $x=\tau \theta$.
\end{defi}

Now, fix some polar coordinates as in \eqref{e:polar-coors}. The corresponding {\it unit sphere} is
$$
 \overline S:= \{x\in \overline \rr^d_D\, :\, \tau(x) = 1\}.
$$
Since $\tau$ is continuous in $\overline \rr^d_D$, the set $\overline S$ is closed in $\overline \rr^d_D$. It is also
bounded away from $D \equiv\{\tau = 0\}$. Hence, the unit sphere $\overline S$ 
is compact (in $\overline \rr^d_D$). Consider the relative topology and corresponding Borel $\sigma$-algebra
induced on $\overline S$. It can be shown that the map 
\begin{equation}\label{e:T-def}
 T:\overline\rr^d_D\setminus\{\tau = \infty\} \to (0,\infty)\times
 \overline S,
\end{equation}
 defined as $T(x):= (\tau(x),\theta(x))$, is a homeomorphism of topological spaces. 
 That is, $T$ is one-to-one and onto, and both $T$ and its inverse $T^{-1}$ are continuous and hence measurable. 
 The restriction of the map $T: \rr^d\setminus D \to (0,\infty)\times S$, where 
 $$
  S := \overline S \cap \rr^d \equiv \{ \tau =1 \} \cap \rr^d 
 $$ 
 is also a homeomorphism. The difference between $\overline S$ and $S$ is that the former may (and typically will) 
 contain infinite points in $\overline \rr^d\setminus \rr^d.$  Since the measures $\nu$ involved in \eqref{e:d:RV-D} 
 are supported on $\rr^d$, however, we shall work with the uncompactified unit sphere $S$ that contains only 
 points from $\rr^d$.

Any measure $\nu$ on $\rr^d\setminus D$ naturally induces a measure $\widetilde \nu := \nu \circ T^{-1}$ 
on $(0,\infty)\times S$, where $(0,\infty)\times S$ is equipped with the product $\sigma$-algebra. 
The disintegration formula for $\nu$ in \eqref{e:fact:disintegration} below is a consequence of the
scaling property \eqref{e:nu-scaling} and a change of variables.  To gain intuition consider the `cylinder sets' 
$$
  A_{r,B} = \{ x\, : \, \tau(x) >r, \,  \theta(x) \in B\} = T^{-1}( (r,\infty)\times B),\  \ r>0,\ B\subset S
$$ 
and observe that by the scaling property
\begin{equation}\label{e:nu-A.r.B}
\nu(A_{r,B}) = r^{-\alpha} \nu(A_{1,B}) = \widetilde \nu( (r,\infty)\times B ).
\end{equation}
This suggests defining the measure $\sigma_S$ on $S$ as follows
\begin{equation}\label{e:sigma}
\sigma_S(B) := \nu(A_{1,B}) \equiv \widetilde \nu( (1,\infty)\times B),\ \  B\subset S.
\end{equation}
Note that $\sigma_S (S) = \nu\{\tau >1\}<\infty$, since $\{\tau >1\}$ is bounded away from $D$. 
By writing the term $r^{-\alpha}$  in \eqref{e:nu-A.r.B} as $\int_r^\infty \alpha \tau^{-\alpha-1}d\tau$, we obtain
the following result.
  
\begin{fact}\label{fact:disintegration} Let $\nu$ be a Radon measure on $\overline\rr^d_D$, supported on $\rr^d$,
which satisfies \eqref{e:nu-scaling}, for some $\alpha>0$. Let also $(\tau,\theta)$ be polar coordinates on
$\rr^d_D$ as in \eqref{e:polar-coors}.
Then, there exists (unique) finite measure $\sigma_S$ on $S:= \{\tau =1\}\cap \rr^d$, such that
\begin{equation}\label{e:fact:disintegration} 
\nu(A)  =\int_{S}\int_0^\infty  1_{A}(\tau \theta) \frac{\alpha d\tau}{\tau^{\alpha+1}} \sigma_{S}(d\theta),
\end{equation}
for all measurable $A\subset \rr^d_D$. The measure $\sigma_S$ is uniquely identified by \eqref{e:sigma}.
\end{fact}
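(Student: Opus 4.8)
The plan is to reduce the claim to a classical disintegration (or Fubini) argument exploiting the scaling property \eqref{e:nu-scaling} together with the homeomorphism $T$. First I would transport $\nu$ to the product space $(0,\infty)\times S$ via $\widetilde\nu := \nu\circ T^{-1}$, which is legitimate since $T$ is a measurable bijection with measurable inverse and $\nu$ is supported on $\rr^d_D\setminus\{\tau=\infty\}$. The scaling property of $\nu$ translates, under the coordinate map, into a corresponding scaling property of $\widetilde\nu$ in the radial variable only: writing $m_\lambda(\tau,\theta)=(\lambda\tau,\theta)$, we have $\widetilde\nu\circ m_\lambda^{-1} = \lambda^{-\alpha}\widetilde\nu$, because $T(\lambda x)=(\lambda\tau(x),\theta(x))$ by $1$-homogeneity of $\tau$.

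Next I would show that this scaling property forces $\widetilde\nu$ to factor as a product of the measure $\alpha\tau^{-\alpha-1}d\tau$ on $(0,\infty)$ and a finite measure $\sigma_S$ on $S$. Concretely, define $\sigma_S(B):=\widetilde\nu((1,\infty)\times B)$ for Borel $B\subset S$, as in \eqref{e:sigma}; this is a finite measure on $S$ because $\{\tau>1\}$ is bounded away from $D$ and hence has finite $\nu$-measure by Fact \ref{fact:compact} and the Radon property. For a fixed $B$, the set function $r\mapsto\widetilde\nu((r,\infty)\times B)$ satisfies, by the radial scaling, $\widetilde\nu((r,\infty)\times B)=\widetilde\nu(m_r^{-1}((1,\infty)\times B)) = r^{-\alpha}\widetilde\nu((1,\infty)\times B) = r^{-\alpha}\sigma_S(B)$, which is exactly \eqref{e:nu-A.r.B}. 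Since $r^{-\alpha}=\int_r^\infty\alpha\tau^{-\alpha-1}\,d\tau$, the measures $\widetilde\nu(\cdot\times B)$ and $\bigl(\int\alpha\tau^{-\alpha-1}d\tau\bigr)\sigma_S(B)$ agree on all sets of the form $(r,\infty)$. These generate the Borel $\sigma$-algebra on $(0,\infty)$ and form a $\pi$-system, and both measures are $\sigma$-finite on it, so by the $\pi$-$\lambda$ (uniqueness of measures) theorem they coincide as measures in the radial coordinate for every fixed $B$. A second application of the uniqueness theorem, now over the product $\pi$-system $\{(r,\infty)\times B\}$ which generates the product $\sigma$-algebra, shows $\widetilde\nu(dr\,d\theta)=\alpha r^{-\alpha-1}dr\,\sigma_S(d\theta)$ as measures on $(0,\infty)\times S$.

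Finally, I would pull this back through $T$: for measurable $A\subset\rr^d_D$,
\begin{equation*}
\nu(A)=\widetilde\nu\bigl(T(A)\bigr)=\int_S\int_0^\infty 1_{T(A)}(\tau,\theta)\,\frac{\alpha\,d\tau}{\tau^{\alpha+1}}\,\sigma_S(d\theta)=\int_S\int_0^\infty 1_A(\tau\theta)\,\frac{\alpha\,d\tau}{\tau^{\alpha+1}}\,\sigma_S(d\theta),
\end{equation*}
using $T^{-1}(\tau,\theta)=\tau\theta$ and the change-of-variables identity $1_{T(A)}(\tau,\theta)=1_A(T^{-1}(\tau,\theta))$; this is \eqref{e:fact:disintegration}. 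Uniqueness of $\sigma_S$ is immediate from \eqref{e:sigma}: any $\sigma_S$ satisfying \eqref{e:fact:disintegration} must assign $\sigma_S(B)=\nu(A_{1,B})$ by plugging in $A=A_{1,B}$ and computing the inner integral $\int_1^\infty\alpha\tau^{-\alpha-1}\,d\tau=1$. The main obstacle I anticipate is not conceptual but a matter of care with measurability and $\sigma$-finiteness: one must verify that $\widetilde\nu$ is a well-defined Radon measure on the product space (which follows from $T$ being a homeomorphism off $\{\tau=\infty\}$ and $\nu$ vanishing there), that $\sigma_S$ is genuinely finite (needs $\{\tau>1\}$ bounded away from $D$, via Fact \ref{fact:compact}), and that the uniqueness-of-measures argument is applied to $\pi$-systems on which the relevant measures are $\sigma$-finite so that the extension is unique; none of these is deep, but they are the points where a sloppy argument would break.
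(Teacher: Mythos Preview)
Your proposal is correct and follows essentially the same approach as the paper, which in fact does not give a formal proof of this Fact but only sketches it in the paragraph preceding the statement: define $\sigma_S(B):=\nu(A_{1,B})$, use scaling to get $\nu(A_{r,B})=r^{-\alpha}\sigma_S(B)$, and write $r^{-\alpha}=\int_r^\infty\alpha\tau^{-\alpha-1}\,d\tau$. Your write-up simply supplies the missing details---the $\pi$-$\lambda$ extension from cylinder sets and the care with $\sigma$-finiteness---that the paper leaves implicit.
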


The measure $\sigma_{S}$ in \eqref{e:fact:disintegration} will be referred to as a
{\em spectral measure} of $\nu$ and will be used in the sequel to conveniently represent implicit extreme value laws.
Depending on the cone $D$, the `right' choice of polar coordinates and resulting unit `sphere' may be somewhat 
counter-intuitive in applications as the following example shows. See also Example 3.1 in \cite{lindskog:resnick:roy:2013}.

\begin{example}[Pareto and Dirichlet] \label{ex:Pareto-Dirichlet} Let $X = (U_i^{-1/\alpha_i})_{i=1}^d, $ where $U_i\stackrel{iid}{\sim}$Uniform$(0,1)$ and $\alpha_i>0,\ i=1,\cdots,d$. That is,
the components of $X$ are independent standard $\alpha_i$-Pareto. It is well known that $X$ is regularly varying in the 
usual sense, where the measure of regular variation $\nu$ 
concentrates on the coordinate axes corresponding to heaviest tail(s). More precisely, 
$X\in RV_\alpha(\{0\},\nu)$, where $\alpha:= \min_{i=1,\cdots,d} \alpha_i$, and in this case
$$
 \nu( \bigtimes_{i=1}^d (x_i,\infty) ) = \sum_{i=1}^d \delta_{\alpha}(\alpha_i) \frac{1}{x_i^{\alpha+1}},\ \ \mbox{ for all }x_i>0.
$$
However, if one excises the axes as well as the origin, the random vector $X$ becomes regularly varying with non--trivial measure $\nu$
supported on the entire positive orthant for all possible choices of positive exponents $\alpha_i$. 

Indeed, focus on the strictly positive orthant $\rr^d_D := (0,\infty)^d$. Since
$$
P( X^{(i)} >x_i,\ i=1,\cdots,d ) = \prod_{i=1}^d x_i^{-\alpha_i} =: \nu(  \bigtimes_{i=1}^d (x_i,\infty) ),\ \ \mbox{ for all } x_i\ge 1,
$$
it is easy to see that $X \in RV_\alpha(\{n^{1/\alpha}\},D,\nu)$, where 
\begin{equation}\label{e:Pareto-alpha-nu}
 \alpha = \sum_{i=1}^d\alpha_i, \ \ \mbox{ and }\ \ \nu(dx_1\cdots d x_d) = \prod_{i=1}^d \frac{\alpha_i}{x_i^{\alpha_i+1}} \times  dx_1\cdots dx_d. 
\end{equation}
We shall now determine the spectral measure of $\nu$ in suitable polar coordinates. Let 
\begin{equation}\label{e:tau-Pareto}
 \tau(x) = \left( \sum_{i=1}^d \frac{1}{(x_i)_+} \right)^{-1} \equiv \| 1/x_+ \|_{\ell_1}^{-1}\ \ \mbox{ and }\ \ \theta(x) = x/\tau(x).
\end{equation}
Observe that $\tau:\overline\rr^d \to [0,\infty]$ is $1$--homogeneous, continuous, $\tau(x)<\infty,\ x\in\rr^d$,
and $\{\tau>0\}= (0,\infty]^d$. Therefore, $(\tau,\theta)$ are
valid polar coordinates in $(0,\infty)^d\subset \overline \rr^d\setminus\{D\cup \{\tau = \infty\})$. 
Note also that the unit sphere $S = \{\tau = 1\} \cap \rr^d$ can be parameterized as follows:
$$
 S = {\Big\{} (1/u_i)_{i=1}^d\, :\, u_i \in (0,1),\ \sum_{i=1}^d u_i = 1 {\Big\}}.
$$
That is, $S$ is the image of the open unit simplex with respect to the coordinate-wise inversion operation ${\cal I}(u_1,\cdots,u_d) = (1/u_1\cdots 1/u_d)$.
With this parameterization, we have that
$x_i = \tau/u_i,\ i=1,\cdots,d$ and a standard computation of Jacobians yields
$$
  dx_1 \cdots dx_d = \tau ^{d-1} \prod_{i=1}^d u_i^{-2} \times d\tau du_1\cdots d u_{d-1},
$$
where the free variables are $\tau \in (0,\infty)$ and $u_i,\ i=1,\cdots,d-1$ with $u_i> 0, \sum_{i=1}^{d-1} u_i < 1$. For notational convenience
we let  $u_n:= (1-\sum_{i=1}^{d-1}u_i)$.

Now, for the measure $\nu$ in \eqref{e:Pareto-alpha-nu}, we obtain
\begin{eqnarray}\label{e:Dirichlet-spectrum}
 \nu(d\tau du_1\cdots d u_{d-1}) &=& \prod_{i=1}^d \frac{\alpha_i u_i^{\alpha_i+1} }{\tau^{\alpha_i+1}} \times \tau ^{d-1} \prod_{i=1}^d u_i^{-2} \times d\tau du_1\cdots d u_{d-1}\nonumber\\
 & =& \frac{\alpha d\tau}{\tau^{\alpha +1}}\times \frac{\prod_{i=1}^d \alpha_i}{\alpha}  u_1^{\alpha_1-1} \cdots u_{d}^{\alpha_{d}-1} du_1\cdots d u_{d-1} 
    =: \frac{\alpha d\tau}{\tau^{\alpha+1}} \times \sigma_S(d\theta).
  \end{eqnarray}
This calculation shows an intriguing fact that the spectral measure $\sigma_S$ in \eqref{e:Dirichlet-spectrum} is up to a constant the lift of a Dirichlet distribution 
on the unit simplex. That is, with ${\cal I}(x)= 1/x$, we have that
$$
\sigma_S(B) = c_{\{\alpha_i\}} P( {\cal I}(\xi)  \in B ),\ \ \mbox{ where }\xi\sim {\rm Dirichlet}(\alpha_1,\cdots,\alpha_d),
$$
and where $c_{\{\alpha_i\}} = (\prod_{i=1}^d \alpha_i \Gamma(\alpha_i))/(\alpha \Gamma(\alpha)).$

This result can be used to efficiently simulate from implicit max-stable laws  and in fact to characterize all such laws that have spectral measures absolutely continuous with 
respect to $\sigma_S$ (see Example \ref{ex:Pareto-Dirichlet-continued} and Proposition \ref{p:Pareto-Dirichlet-tilting}, below). 
\end{example}

  \begin{remark} Other choices of polar coordinates are possible with the caveat that the unit `sphere' needs to be bounded away from $D$.
  The typical choice of a unit sphere $S = \{ \|x\| =1\} \setminus D,$ where $\|\cdot\|$ is some norm in $\rr^d$ would
  not have worked well in the previous example. Indeed, it could provide a disintegration formula of the type \eqref{e:fact:disintegration}, but the resulting spectral measure will
  be infinite. This is because the set $S$ is not bounded away from $D$. 
  \end{remark} 
  
  \begin{remark} For most cones $D$ it is not possible to extend the homogeneous 
  polar coordinates as a homeomorphism to the {\em entire} space $\overline \rr^d\setminus D$ (including 
  all points at infinity). Indeed, consider Example \ref{ex:Pareto-Dirichlet}, where $\overline \rr^d_D = (0,\infty]^d$ 
  and  $(\tau,\theta)$ are as in \eqref{e:tau-Pareto}. Then, $(\tau,\theta): (0,\infty]^d\setminus\{(\infty,\cdots,\infty)\} \to 
    (0,\infty)\times \overline S$ is a  homeomorphism.  Since $\{\infty\} \times \overline S$ is 
    not homeomorphic to the single point $(\infty,\cdots,\infty)$, however, the polar coordinates do not extend 
     to $\overline \rr_D^d$. 
     
     In the classic case of $\overline \rr^d_{\{0\}}$, the coordinates $\tau(x):= \|x\|$, $\theta(x):=x/\|x\|$,
     extend by continuity to $\overline\rr^d_{\{0\}}$, where $\|\cdot\|$ is an arbitrary norm in $\rr^d$. 
     This is perhaps the only case when $(\tau,\theta): \overline\rr^d_{\{0\}} \to (0,\infty] \times \overline S$ 
     is a homeomorphism. 
  \end{remark}
 
 We give next a version of the well-known characterization of regular variation on $\rr^d_D$ in terms 
 of generalized polar coordinates. The proof is given in the Appendix.
 
\begin{prop}\label{p:RV-polar} Let $(\tau,\theta)$ be polar coordinates in $\rr^d_D$ as 
in Definition \ref{d:polar-coors}. Then $X\in RV_\alpha(\{a_n\},D,\nu)$ if and only if, for some 
$C>0$ and all $x>0$
\begin{equation}\label{e:p-RV-polar}
  n P( a_n^{-1} \tau(X) >x) \mathop{\longrightarrow}_{n\to\infty} C x^{-\alpha} \quad{ and }\quad\ P(\theta(X) \in \cdot\, \vert\, \tau(X)>u) 
  \mathop{\stackrel{w}{\longrightarrow}}_{u\to\infty} \sigma_0(\cdot),
\end{equation}
where $\sigma_0(\cdot)$ is a probability measure on the unit sphere $S$. In this case, the spectral measure 
$\sigma_S$ of $\nu$ and $\sigma_0$ are related as follows $\sigma_S = C \sigma_0$, where $C = \nu(\{\tau>1\})$.  
 \end{prop}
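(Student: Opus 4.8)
The plan is to prove the equivalence by translating the vague convergence in Definition~\ref{d:RV-D} through the homeomorphism $T$ of \eqref{e:T-def}, which identifies $\rr^d_D$ with $(0,\infty)\times S$ and (after adding the point at infinity) $\overline\rr^d_D\setminus\{\tau=\infty\}$ with a correspondingly nice space. Under $T$, the measures $\mu_n := nP(a_n^{-1}X\in\cdot)$ become measures $\widetilde\mu_n := \mu_n\circ T^{-1}$ on $(0,\infty)\times S$, and a set is relatively compact in $\overline\rr^d_D$ exactly when, under $T$, it lies in $[r,\infty)\times S$ for some $r>0$ together with being closed (using Fact~\ref{fact:compact} and the fact that $\{\tau\ge r\}$ is bounded away from $D$). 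So vague convergence $\mu_n\stackrel{v}{\to}\nu$ on $\overline\rr^d_D$ is equivalent to vague convergence $\widetilde\mu_n\stackrel{v}{\to}\widetilde\nu$ on $(0,\infty)\times S$. This is the conceptual backbone; everything else is extracting the radial and angular pieces.

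First I would prove the forward direction. Assume $X\in RV_\alpha(\{a_n\},D,\nu)$. Applying \eqref{e:d:RV-D} to the sets $A_{r,S} = \{\tau(X) > r\}$, which are bounded away from $D$ and, if we also want $\nu$-continuity, have boundary of $\nu$-measure zero for all but countably many $r$ by the scaling property \eqref{e:nu-scaling}, gives $nP(a_n^{-1}\tau(X) > r) \to \nu(A_{r,S}) = r^{-\alpha}\nu(A_{1,S}) = Cr^{-\alpha}$ with $C := \nu(\{\tau>1\})\in(0,\infty)$; this is the first half of \eqref{e:p-RV-polar}. For the angular part, fix a bounded continuous $h:S\to\rr$ and a continuity level $u$; writing $P(\theta(X)\in\cdot \mid \tau(X)>u)$ and multiplying and dividing by $nP(\tau(X)>a_n u')$ for appropriate $u'$, one reduces to showing $\int h(\theta)\,\widetilde\mu_n(d\tau\,d\theta)$ restricted to $\{\tau>r\}$ converges to $\int h(\theta)\,\widetilde\nu(d\tau\,d\theta)$ restricted to $\{\tau>r\}$; by the disintegration Fact~\ref{fact:disintegration} the latter equals $Cr^{-\alpha}\int_S h\,d\sigma_0$ with $\sigma_0 := \sigma_S/C$. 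A standard subsequence/Portmanteau argument along continuity points $u\to\infty$ (or equivalently $r\to\infty$ along the approximating sequence $a_n u'$), using that $u\mapsto P(\tau(X)>u)$ is eventually regularly varying hence allows interpolation between the discrete levels $a_n$, yields the weak convergence $P(\theta(X)\in\cdot\mid\tau(X)>u)\stackrel{w}{\to}\sigma_0$. Since $\sigma_0(S) = \sigma_S(S)/C = \nu(\{\tau>1\})/C = 1$, $\sigma_0$ is a probability measure.

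Conversely, assume \eqref{e:p-RV-polar} holds. Define $\nu$ on $\rr^d_D$ by the disintegration formula \eqref{e:fact:disintegration} with spectral measure $\sigma_S := C\sigma_0$; this is a Radon measure on $\overline\rr^d_D$ supported on $\rr^d_D$ since $\{\tau>r\}$ is bounded away from $D$ and $\nu(\{\tau>r\}) = Cr^{-\alpha}<\infty$, and it satisfies \eqref{e:nu-scaling}. To get \eqref{e:d:RV-D} it suffices, by the compactness characterization above, to check $\mu_n(g)\to\nu(g)$ for $g = \mathbf 1_{(r,\infty)}(\tau)\,h(\theta)$ with $h$ bounded continuous on $S$ and $r$ a continuity point, since finite linear combinations of such ``cylinder'' functions, together with monotone-class / $\pi$-$\lambda$ arguments, are enough to pin down the vague limit (products of a continuous radial cutoff and a continuous angular function are dense in the relevant sense). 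Now $\mu_n(g) = nP(\tau(X)>a_n r)\cdot P(\theta(X)\in h\text{-average}\mid\tau(X)>a_n r)$, and the first factor $\to Cr^{-\alpha}$ by the radial hypothesis while the second $\to\int_S h\,d\sigma_0$ by the angular hypothesis (here $a_n r\to\infty$); multiplying, $\mu_n(g)\to Cr^{-\alpha}\int_S h\,d\sigma_0 = \nu(g)$ by \eqref{e:fact:disintegration}. The asserted relation $\sigma_S = C\sigma_0$ with $C = \nu(\{\tau>1\})$ then holds by construction, and uniqueness of $\sigma_S$ is Fact~\ref{fact:disintegration}.

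The main obstacle is the angular part of the forward direction: passing from convergence along the integer-indexed, geometrically-spaced radial levels $a_n$ to convergence of the conditional angular law $P(\theta(X)\in\cdot\mid\tau(X)>u)$ as a \emph{continuous} parameter $u\to\infty$. This requires knowing that $u\mapsto P(\tau(X)>u)$ is (eventually) regularly varying so that the ratio $P(\tau(X)>u)/P(\tau(X)>a_n)$ behaves well for $a_n\le u< a_{n+1}$ — which follows because $\{a_n\}$ is regularly varying with index $1/\alpha$ and the discrete limit $nP(a_n^{-1}\tau(X)>x)\to Cx^{-\alpha}$ is uniform on $[\delta,\infty)$, a standard but slightly delicate Potter-type argument. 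One must also handle continuity points of $\nu$ (equivalently of $r\mapsto Cr^{-\alpha}$, which is continuous everywhere, so in fact every $r>0$ works for the radial sets, and only the $\sigma_S$-boundary of $h$-discontinuities matters on the angular side — negligible for continuous $h$). I would relegate these Potter-bound details, and the verification that cylinder functions determine the vague limit, to the Appendix as indicated.
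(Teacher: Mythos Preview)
Your proposal is correct and follows essentially the same route as the paper: both directions use the polar homeomorphism $T$ to reduce to cylinder sets/functions, and you correctly identify the one nontrivial step---passing from the discrete radial levels $a_n$ to a continuous parameter $u\to\infty$ via regular variation of $u\mapsto P(\tau(X)>u)$---which the paper handles by a monotone-sandwich argument using $a_n^*:=\inf_{m\ge n}a_m$ rather than Potter bounds, but the idea is the same. The only cosmetic difference is that the paper proves $Q_{a_n}\stackrel{w}{\to}\sigma_0$ first via Portmanteau on open sets and then extends, and in the converse direction invokes an explicit ``convergence-determining semiring'' result (Daley--Vere-Jones) where you speak of $\pi$-$\lambda$ / monotone-class arguments.
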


We give next an extension of the standard Breiman-type lemma, which provides a useful way
of constructing regularly varying distributions on cones.

\begin{lemma}[Breiman in polar coordinates]\label{l:Breiman}  Let $X:=ZV$, where $Z$ and $V$ be independent and such that 
$Z$ is a positive random variable and $V$ takes values in the cone $\rr^d\setminus D$. 
Then, the conditions
 $$
  P(Z>x) \sim x^{-\alpha},\ x\to\infty\ \quad \mbox{ and }\ \quad E(\tau^\alpha (V)) <\infty,
$$
imply that for all $x>0$
$$
nP(n^{-1/\alpha} \tau(X) >x) \mathop{\longrightarrow}_{n\to\infty} E(\tau^\alpha(V)) x^{-\alpha}
\ \ \mbox{ and }\ \  P(\theta(X) \in \cdot\, \vert\, \tau(X)>u) 
\mathop{\stackrel{TV}{\longrightarrow}}_{u\to\infty} \sigma_V(\cdot),
$$
where the last convergence is in the sense of total variation norm and
$$
\sigma_V(B):= \frac{1}{E \tau^\alpha(V)} \int_{\rr^d} 1_B(\theta(v)) \tau^\alpha(v) P_V(dv).
$$
In particular, $X \in RV_\alpha(\{n^{1/\alpha}\},D,\nu)$, where the spectral measure of 
$\nu$ is $\sigma_S(\cdot) = E (\tau^\alpha(V))\, \sigma_V(\cdot)$.
\end{lemma}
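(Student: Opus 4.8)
The plan is to establish the radial tail asymptotics and the angular convergence separately, then invoke Proposition \ref{p:RV-polar} to conclude. First I would handle the radial part. Since $\tau$ is $1$-homogeneous, $\tau(X) = \tau(ZV) = Z\tau(V)$, so $\tau(X)$ is a product of the independent random variables $Z$ and $\tau(V)$, with $P(Z>x)\sim x^{-\alpha}$ and $\E(\tau^\alpha(V))<\infty$. This is exactly the setting of the classical Breiman lemma, which gives $P(\tau(X)>x) = P(Z\tau(V)>x) \sim \E(\tau^\alpha(V))\, x^{-\alpha}$ as $x\to\infty$. Rewriting with $x$ replaced by $n^{1/\alpha}x$ and multiplying by $n$ yields $nP(n^{-1/\alpha}\tau(X)>x)\to \E(\tau^\alpha(V))\,x^{-\alpha}$, which is the first displayed convergence and verifies the first half of \eqref{e:p-RV-polar} with $C = \E(\tau^\alpha(V))$ and $a_n = n^{1/\alpha}$.

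Next I would treat the angular component. Again using $1$-homogeneity, $\theta(X) = X/\tau(X) = ZV/(Z\tau(V)) = V/\tau(V) = \theta(V)$, so remarkably the angular part of $X$ does not depend on $Z$ at all --- only the \emph{conditioning event} $\{\tau(X)>u\}$ couples things. So for a bounded measurable $g$ on $S$ we need to understand
\begin{equation*}
\E\bigl[g(\theta(V))\,\big|\, Z\tau(V)>u\bigr] = \frac{\E\bigl[g(\theta(V))\,P(Z>u/\tau(V)\,|\,V)\bigr]}{\E\bigl[P(Z>u/\tau(V)\,|\,V)\bigr]} = \frac{\E\bigl[g(\theta(V))\,\overline{F}_Z(u/\tau(V))\bigr]}{\E\bigl[\overline{F}_Z(u/\tau(V))\bigr]},
\end{equation*}
where $\overline{F}_Z(x) = P(Z>x)$. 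The idea is that $\overline{F}_Z(u/\tau(V)) \approx (u/\tau(V))^{-\alpha} = u^{-\alpha}\tau^\alpha(V)$ for large $u$, so both numerator and denominator, after multiplying by $u^\alpha$, converge to $\E[g(\theta(V))\tau^\alpha(V)]$ and $\E[\tau^\alpha(V)]$ respectively, giving the ratio $\sigma_V(g)/\sigma_V(S)$ as claimed (note $\sigma_V$ is already normalized to be a probability measure, so the limit is $\int g\,d\sigma_V$). To upgrade this to total variation convergence I would bound the difference of the conditional law from $\sigma_V$ uniformly over all measurable $g$ with $\|g\|_\infty\le 1$; the key estimate is a uniform (in the test function) control of $u^\alpha\overline{F}_Z(u/\tau(V))$ against $\tau^\alpha(V)$ together with a dominated-convergence argument, using $\E(\tau^\alpha(V))<\infty$ as the integrable majorant (after a Potter-type bound on $\overline F_Z$ near infinity, and splitting off the region where $\tau(V)$ is small so that $u/\tau(V)$ stays large).

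The main obstacle is precisely making the total variation convergence rigorous and uniform. Pointwise/weak convergence of the conditional angular law is a routine Breiman-type computation, but the claimed $TV$ convergence requires that the approximation $u^\alpha\overline F_Z(u/t)\to t^\alpha$ be controlled simultaneously for all $t=\tau(V)$ in the support of $V$ and dominated by something $P_V$-integrable against $g$ uniformly in $\|g\|_\infty\le 1$. Concretely one writes
\begin{equation*}
\Bigl| \E\bigl[g(\theta(V))\,\big|\,\tau(X)>u\bigr] - \int_S g\,d\sigma_V \Bigr| \le \frac{\E\bigl[\,\bigl|u^\alpha\overline F_Z(u/\tau(V)) - \tau^\alpha(V)\bigr|\,\bigr]}{\E\bigl[u^\alpha\overline F_Z(u/\tau(V))\bigr]} + (\text{a denominator-correction term}),
\end{equation*}
and shows the right-hand side tends to $0$ independently of $g$; this reduces the whole matter to an $L^1(P_V)$ convergence $u^\alpha\overline F_Z(u/\tau(V)) \to \tau^\alpha(V)$, which follows from a.s.\ convergence plus uniform integrability secured by Potter's bounds and $\E(\tau^\alpha(V))<\infty$. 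Once both bullet points of \eqref{e:p-RV-polar} are in hand, Proposition \ref{p:RV-polar} immediately gives $X\in RV_\alpha(\{n^{1/\alpha}\},D,\nu)$ with $\sigma_S = C\sigma_0 = \E(\tau^\alpha(V))\,\sigma_V$, completing the proof.
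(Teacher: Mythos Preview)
Your overall strategy is correct and matches the paper's: establish the radial tail via Breiman, analyze the conditional angular law, and conclude via Proposition~\ref{p:RV-polar}. The paper likewise writes
\[
P(\theta(X)\in B\mid \tau(X)>u)=\int_{\rr^d}1_B(\theta(v))\,h_u(v)\,P_V(dv),
\qquad h_u(v):=\frac{P(Z>u/\tau(v))}{P(\tau(X)>u)},
\]
and observes that $h_u(v)\to h(v):=\tau^\alpha(v)/\E\tau^\alpha(V)$ pointwise.

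The one genuine difference is how total variation convergence is obtained. You normalize by $u^{-\alpha}$ and argue that $u^\alpha\overline F_Z(u/\tau(V))\to\tau^\alpha(V)$ in $L^1(P_V)$ via Potter-type bounds and domination by a multiple of $\tau^\alpha(V)$. This works, but the paper's argument is shorter: since setting $B=S$ shows each $h_u$ is a probability density with respect to $P_V$, and $h$ is also a probability density, Scheff\'e's lemma (stated in the appendix as Lemma~\ref{l:Scheffe}) gives $\|h_u-h\|_{L^1(P_V)}\to 0$ immediately, hence TV convergence of the induced angular laws. No Potter bounds or truncation are needed. Your approach has the mild advantage of being self-contained (not invoking an auxiliary lemma), but the Scheff\'e route is cleaner and avoids the bookkeeping you allude to.

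One small correction in your outline: the region requiring separate treatment is where $\tau(V)$ is \emph{large} (so that $u/\tau(V)$ may fail to be in the Potter range), not where it is small. On $\{\tau(V)\ge u/x_0\}$ one uses the trivial bound $\overline F_Z\le 1$ together with $u^\alpha\le x_0^\alpha\,\tau^\alpha(V)$ to still dominate by a constant multiple of $\tau^\alpha(V)$; on the complement the asymptotic $\overline F_Z(x)\le(1+\epsilon)x^{-\alpha}$ for $x>x_0$ suffices. With this fix your domination argument goes through.
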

\begin{proof} By the extension of Breiman's lemma given in Lemma 2.3 (2) of \cite{davis:mikosch:2008}, we have 
\begin{equation}\label{e:example-Breiman-1}
 P(\tau(X)>u) = P( Z \tau(V) > u) \sim u^{-\alpha}E(\tau^\alpha (V)),\mbox{ as }u\to\infty.
\end{equation}
Now, for all measurable $B\subset S\equiv \{\tau = 1\}\cap \rr^d$, by homogeneity and independence 
\begin{eqnarray}\label{e:example-Breiman-2}
& &  P(\theta(X) \in B\, \vert\, \tau(X)>u) = \frac{P( \theta(V) \in B,\, Z\tau(V)>u)}{P(\tau(X)>u)}   \nonumber\\
 & &\quad \quad  =  \int_{\rr^d} 1_{B}(\theta(v)) \frac{P(Z> u/\tau(v)) }{ P(\tau(X)>u)}  P_V(dv) =: \int_{\rr^d} 1_{B}(\theta(v)) h_u(v)  P_V(dv),
\end{eqnarray}
where $P_V$ stands for the law of $V$.

By setting $B = S$, we see that $h_u$ are probability densities. Further, by \eqref{e:example-Breiman-1} and 
since $u^{\alpha}P(Z>u/c) \to c^\alpha,$ as $u\to\infty$, for all $c>0$, we get that 
$$
h_u(v) \longrightarrow h(v):= \frac{\tau^\alpha(v)}{ E \tau^\alpha(V)},\ \mbox{ as }u\to\infty,
$$ 
where the convergence is valid for all $v$ since $h_u(v)\equiv 0$, by convention when $\tau(v) = 0$.  Note that
$h$ is also a probability density with respect to $P_V$.  Thus, by the Scheffe-type Lemma \ref{l:Scheffe}, we get that, as $u\to\infty$,
$$%\begin{equation}\label{e:example-Breiman-sigma_V}
P(\theta(X) \in \cdot  \, \vert\, \tau(X)>u) \stackrel{{\rm TV}}{\longrightarrow} \sigma_V(\cdot) := \int_{V} 1_{(\cdot)} (\theta(v)) \frac{\tau^\alpha(v)}{ E \tau^\alpha(V)} P_V(dv).
$$%%\end{equation}
This along with \eqref{e:example-Breiman-1} thanks to Proposition \ref{p:RV-polar}, implies that 
$X = ZV \in RV_\alpha(\{n^{1/\alpha}\},D,\nu)$, where the spectral measure 
of $\nu$ is 
$%\begin{equation}\label{e:example-Breiman-sigma}
  \sigma_S(\cdot) = E(\tau^\alpha(V))\, \sigma_V(\cdot).
$%\end{equation}
\end{proof}

\subsection{Limit theorems for implicit extremes} \label{sec:imp_ext}

We start by listing the assumptions on $f$ and $X$.

\medskip
\noindent
{\bf Assumption RV$_\alpha(D,\nu)$.} {\em Let $D \subset \overline\rr^d$ be a closed cone in $\overline \rr^d$. 
We suppose that $X\in RV_\alpha(\{a_n\}, D,\nu)$, that is,  $X$ is regularly varying on $\overline\rr^d \setminus D$ with index 
$\alpha>0$ and some Radon measure $\nu$ that does not charge infinite lines, i.e.\ $\nu(\overline\rr^d\setminus(\rr^d\cup D)) = 0$.}

\medskip
\noindent{\bf Assumption H.}
{\em Let $f: \overline\rr^d \to [0,\infty]$ be Borel measurable, such that $f(x)<\infty,\ x\in\rr^d$, 
$f(0) = 0$, and 1-homogeneous, that is, 
\begin{equation}\label{eq4}
 f(\lambda x)=\lambda f(x)\quad\text{for all $\lambda>0$ and $x\neq 0$.}
\end{equation}}

\medskip
We shall use in the sequel the following two conditions relating $f$ and $\nu$.

\medskip
\noindent
{\bf Assumption F.} {\em For all $\epsilon>0$, the set $\{ f >\epsilon \}$ is bounded away from $D$. Furthermore, for all 
compact $K\subset \overline\rr^d_D$, we have
\begin{equation}\label{e:f-bdd-below}
\inf_{x\in K} f(x) > 0.
\end{equation}
}

\begin{remark} \label{rem:f-and-D} Assumption F implies that $\{f=0\} = D$. Indeed, $\{f>0\} = \cup_{\epsilon >0} \{f>\epsilon \} \subset  \overline\rr^d \setminus D$ and 
hence $D\subset \{f=0\}$. On the other hand for all $x\in \overline \rr^d\setminus D$, there exists a compact $K\subset \rr^d\setminus D$ such that $x\in K$
and thus $f(x)>0$, by \eqref{e:f-bdd-below}. This shows that $\{f=0\}\subset D$ and hence $\{f=0\} = D.$
\end{remark}

\medskip
\noindent
{\bf Assumption C.} We have $\nu( \overline{{\rm Disc}(f)} ) = 0$, where $\overline{{\rm Disc}(f)}$ denotes 
the closure in $\overline \rr^d_D$ of the set of all discontinuity points of $f$. 

\medskip
We fix some polar coordinates as in Definition \ref{d:polar-coors} so that the map
$x\mapsto (\tau,\theta)$ is a homeomorphism between the spaces $\rr^d_D$ and $(0,\infty)\times S$.
Recall the disintegration formula from Fact \ref{fact:disintegration}:
\begin{equation}\label{eq3a}
\nu(A)=\int_{S}\int_0^\infty 1_{A}(\tau\theta)\, \frac{\alpha d\tau}{\tau^{\alpha+1}}\,  \sigma(d\theta),
\end{equation}
where $\sigma= \sigma_S$ is the unique finite {\em spectral measure} of $\nu$, relative to these polar coordinates.

\medskip
\begin{remark} By the homogeneity of the function $f$, we have
\begin{equation}\label{e:f-polar}
 f(x) = \tau(x) f_0(\theta(x) ), 
\end{equation}
where $f_0: S \to (0,\infty)$ may be viewed as the {\em angular} part of $f$. By \eqref{eq3a}, 
Assumption C is equivalent to $\sigma(\overline{{\rm Disc}(f_0)}) =0$, where $S$ is equipped with the 
relative topology. This means that the atoms of the spectral measure $\sigma$ do not coincide with discontinuity 
points of the angular component $f_0$. 
\end{remark}

\begin{remark} Assumptions F and C are clearly fulfilled if $f:\overline \rr^d \to [0,\infty]$ is continuous and such that 
$D = \{f= 0\}$. In view of \eqref{e:f-polar}, however, interesting discontinuous homogeneous functions can be constructed that should 
be covered by a limiting theory.  This motivates the more general Assumption C.
\end{remark}
 
\begin{theorem}\label{thm1} Assumptions $RV_\alpha(D,\nu)$, H, F and C imply
\begin{equation}\label{eq5}
 a_n^{-1}X_{k(n)}  \Longrightarrow Y\quad\text{as $n\to\infty$}
\end{equation}
where $Y$ is a random vector taking values in $\overline\rr^d\setminus D$, with distribution
\begin{equation}\label{eq6}
P_Y(dx)=e^{-Cf(x)^{-\alpha}}\,\nu(dx)
\end{equation}
where
\begin{equation}\label{eq6-C}
 C:=\nu(\{ z\, :\, f(z)>1 \}) = \int_{S}f(\theta)^{\alpha}\,\sigma(d\theta) <\infty.
 \end{equation}
 The random vector $Y$ is proper, i.e.\ takes values in $\rr^d\setminus D$ since by assumption $\nu(\overline\rr^d\setminus(\rr^d\cup D)) = 0$, that is,
 $\nu$ does not charge points on the infinite lines.
 \end{theorem}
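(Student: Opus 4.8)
The plan is to compute the limiting distribution of $a_n^{-1}X_{k(n)}$ directly from the bounds in Lemma \ref{lem1} and the regular variation assumption, by testing against bounded continuous functions. First I would reduce matters to showing that for every bounded continuous $g:\overline\rr^d\to\rr$ vanishing on a neighborhood of $D$ one has $\Exp[g(a_n^{-1}X_{k(n)})]\to\int g\,dP_Y$, and then handle the general bounded continuous $g$ (including those not vanishing near $D$) by a separate tightness-type argument showing that $a_n^{-1}X_{k(n)}$ places asymptotically no mass near $D$; this last point uses Assumption F, since $f(X_{k(n)})=\max_i f(X_i)$ is the maximum of iid regularly varying positive random variables and hence $a_n^{-1}f(X_{k(n)})$ is tight and bounded away from $0$ in the limit, so $a_n^{-1}\tau(X_{k(n)})$ is bounded away from $0$ as well.

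For the main computation, I would use \eqref{eq2-new}-type reasoning: writing $G(y)=P(f(X)\le y)$, the key fact from Assumption F together with $X\in RV_\alpha(D,\nu)$ and the disintegration formula \eqref{eq3a} is that $\{f>1\}$ is bounded away from $D$, so $nP(f(a_n^{-1}X)>1)\to\nu(\{f>1\})=C$ with $C<\infty$ by \eqref{eq6-C}; more generally, by homogeneity $nP(f(X)>a_n t)\to C t^{-\alpha}$. Hence for a fixed $x$ with $f(x)>0$, substituting $y=a_n f(x)$ gives $G(a_n f(x))^{n-1}=(1-P(f(X)>a_nf(x)))^{n-1}\to \exp(-Cf(x)^{-\alpha})$. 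The upper bound in \eqref{eq1-new}, after the change of variables $x\mapsto a_n x$, reads
\begin{equation*}
P\{a_n^{-1}X_{k(n)}\in A\}\le \int_A n\,G(a_n f(x))^{n-1}\,P_{a_n^{-1}X}(dx),
\end{equation*}
and I would integrate $g(x)$ against the measures $\mu_n(dx):=n\,G(a_n f(x))^{n-1}\,P_{a_n^{-1}X}(dx)$, recognizing that $nP_{a_n^{-1}X}\stackrel{v}{\to}\nu$ while the multiplier $G(a_n f(x))^{n-1}$ converges to the continuous (on $\{f>0\}$, off $\Disc(f)$) bounded function $e^{-Cf(x)^{-\alpha}}$. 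The corresponding lower bound from \eqref{eq1-new}, with $G(f(x)-)$ in place of $G(f(x))$, converges to the same limit because $G(a_nf(x)-)^{n-1}$ and $G(a_nf(x))^{n-1}$ differ by the mass of the single point $\{f(X)=a_nf(x)\}$, which is negligible after multiplication by $n$ along the relevant scale (and one may also invoke that $G$ can have at most countably many atoms so that $G(y-)=G(y)$ for a.e. $y$). Squeezing the upper and lower bounds then yields $\mu_n\stackrel{v}{\to}\mu$ with $\mu(dx)=e^{-Cf(x)^{-\alpha}}\nu(dx)$, and Assumptions C (so the discontinuity set of the multiplier is $\nu$-null) and F (so the multiplier is bounded and, away from $D$, well-behaved) are exactly what make the product $g(x)e^{-Cf(x)^{-\alpha}}$ a legitimate test function for vague convergence.

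To finish I would verify that $\mu=P_Y$ is a probability measure, i.e. $\int_{\rr^d_D}e^{-Cf(x)^{-\alpha}}\nu(dx)=1$: using the disintegration \eqref{eq3a} and $f(\tau\theta)=\tau f_0(\theta)$,
\begin{equation*}
\int_S\int_0^\infty e^{-C(\tau f_0(\theta))^{-\alpha}}\frac{\alpha\,d\tau}{\tau^{\alpha+1}}\,\sigma(d\theta)
=\int_S\frac{f_0(\theta)^{-\alpha}}{C\,f_0(\theta)^{-\alpha}}\,\sigma(d\theta)
=\frac{1}{C}\int_S f_0(\theta)^{\alpha}\,\sigma(d\theta)=1,
\end{equation*}
where the inner integral is computed by the substitution $s=C(\tau f_0(\theta))^{-\alpha}$ and the last equality is \eqref{eq6-C}; since $\mu$ is a subprobability limit of probability measures and has total mass $1$, no mass escapes, which retroactively justifies that testing against $g$ vanishing near $D$ suffices and that $a_n^{-1}X_{k(n)}$ is tight in $\overline\rr^d_D$. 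That $Y$ is proper, i.e. charges only $\rr^d\setminus D$, is immediate since $\nu(\overline\rr^d\setminus(\rr^d\cup D))=0$ and $\mu\ll\nu$.

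I expect the main obstacle to be the interchange of limit and integral in $\int g\,d\mu_n$: one must argue that the multipliers $G(a_nf(x))^{n-1}$ converge to $e^{-Cf(x)^{-\alpha}}$ not merely pointwise but in a manner compatible with vague convergence of $nP_{a_n^{-1}X}$ to $\nu$ — essentially a "converging measures times converging functions" lemma where the function sequence converges uniformly on compacts away from $\Disc(f)\cup D$. Handling the behavior near $D$ (where $f\to 0$ so $e^{-Cf^{-\alpha}}\to 0$, but the convergence of $G(a_nf(x))^{n-1}$ is not uniform) is where Assumption F does the real work: it guarantees $\{f>\epsilon\}$ is bounded away from $D$ so that the region where the multiplier is not yet close to its limit is, after excising a neighborhood of $D$, a genuine compact on which uniform convergence holds, while on the excised neighborhood both the limiting integrand and (by the tightness argument above) the contribution of $\mu_n$ are uniformly small.
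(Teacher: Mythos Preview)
Your proposal is correct and follows essentially the same route as the paper: the sandwich from Lemma~\ref{lem1}, convergence of the multipliers $G(a_nf(x))^{n-1}\to e^{-Cf(x)^{-\alpha}}$ (uniformly on compacts of $\overline\rr^d_D$), a ``converging densities times vaguely converging measures'' lemma (the paper's Lemma~\ref{l:weak-convergence}, with the uniformity supplied by Lemma~\ref{l:f(X)-rv}(iii)), and verification via the disintegration formula that the limit integrates to one so no mass escapes. One small imprecision: your reason for the lower bound $G(a_nf(x)-)^{n-1}$ having the same limit is not quite right as stated (neither ``negligible after multiplication by $n$'' nor ``$G(y-)=G(y)$ for a.e.\ $y$'' applies at the specific points $y=a_nf(x)$); the paper's clean fix is to note that $\nu(\{f=c\})=0$ for every $c>0$, since the sets $\{f=c\}$ are disjoint and all sit inside $\{f>\epsilon\}$ of finite $\nu$-measure, so $nP(f(X)\ge a_ny)$ and $nP(f(X)>a_ny)$ share the limit $Cy^{-\alpha}$.
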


\begin{proof} 
 By the upper bound in \eqref{eq1-new} of Lemma \ref{lem1}, for any measurable set $A\subset\rr^d$, 
\begin{eqnarray} \label{e:thm1-1}
 P(a_n^{-1} X_{k(n)}\in A) &\le & n\int_{a_nA} P(f(X)\le f(x))^{n-1}P_X(dx)\nonumber\\ 
 &=& \int_{A} {\Big(} 1- \frac{nP(f(X)> f(a_nx))}{n}{\Big)}^{n-1} nP_{a_n^{-1}X} (dx) = : \int_A h_n^{+} d\nu_n,
\end{eqnarray}
where $\nu_n (dx) :=nP_{a_n^{-1}X} (dx)$.  

Similarly, using the lower bound in \eqref{eq1-new} along with the established \eqref{e:thm1-1}, we obtain
\begin{equation}\label{e:thm1-sandwich}
  \int_{A} h_n^{-} d\nu_n\le P(a_n^{-1} X_{k(n)}\in A) \le \int_{A} h_n^{+} d\nu_n.
\end{equation}
where
\begin{equation}\label{e:thm1-1.5}
 h_n^{-}(x):= {\Big(} 1- \frac{nP(f(X)\ge  f(a_nx))}{n}{\Big)}^{n-1}.
\end{equation}
We will show that the two measures $h_n^\pm d\nu_n$ sandwiching the law of 
$a_n^{-1} X_{k(n)}$ in \eqref{e:thm1-sandwich} converge to the same limit, which will 
ultimately yield \eqref{eq5}.  We will first present the intuition and then make the argument precise.

\medskip
By the homogeneity of $f$ and \eqref{e:d:RV-D}, as $n\to\infty$,
\begin{eqnarray}\label{e:thm1-alt}
 nP(f(X)> f(a_nx))  &\equiv& nP(f(X) > a_n f(x) ) \nonumber\\
  &=& n P(a_n^{-1} X \in \{ y \, :\, f(y)>f(x) \}) \nonumber\\
  &\longrightarrow&  \nu(\{ y\, :\, f(y)>f(x)\}).
\end{eqnarray}
This is true, provided $\{ y\, :\, f(y)>f(x)\}$ is a $\nu$-continuity set, which is bounded away from $D$. 
If this is the case, for $h_n^{+}$ in \eqref{e:thm1-1}, we have
\begin{equation}\label{e:thm1-2}
 h_n^{+}(x)\longrightarrow h^{+}(x):= e^{- \nu(\{ y\, :\, f(y)>f(x)\})} \equiv e^{ -  f(x)^{-\alpha} \nu(\{f>1\}) },\ \ \mbox{ as }n\to\infty,
\end{equation}
where in the last relation we used the homogeneity of $f$ and the scaling property of $\nu$.
Under similar conditions, for $h_n^{-}$ in \eqref{e:thm1-1.5}, we obtain
\begin{equation}\label{e:thm1-2.5}
  h_n^{-}(x)\longrightarrow h^{-}(x):= e^{- \nu(\{ y\, :\, f(y)\ge f(x)\})}\equiv e^{ -  f(x)^{-\alpha} \nu(\{f\ge 1\}) },\ \ \mbox{ as }n\to\infty.
\end{equation}

The limit functions $h^{+}$ and $h^{-}$ coincide. Indeed, by the homogeneity of $f$ and the scaling property 
of $\nu$, for all $c>0$
$$
 \nu(\{f\ge c\}) - \nu(\{f>c\}) = \nu(\{f = c\}) = c^{-\alpha}\nu(\{f=1\}).
$$
The sets $\{f=c\},\ c>0$ are, however, disjoint. This, since $\{f>\epsilon\} = \cup_{c>\epsilon} \{f=c\}$ has finite
$\nu$-measure for all $\epsilon>0$ (Assumption F), implies that $\nu(\{f=c\}) = c^{-\alpha} \nu(\{f=1\}) = 0$, for all $c>0$.  
Consequently, $\nu(\{f>1\}) = \nu(\{f\ge 1\})$ and 
\begin{equation}\label{e:thm1-h}
h^{+}(x) = h^{-}(x) =: h(x) = e^{- C f(x)^{-\alpha}},\ \mbox{ for all }x\in \rr^d\setminus D.
\end{equation} 

Recall that by Assumption RV$_\alpha(D,\nu)$, we have $\nu_n\stackrel{v}{\to} \nu,\ n\to\infty$. Hence, Relations \eqref{e:thm1-2}, \eqref{e:thm1-2.5} and \eqref{e:thm1-h}
suggest that the probability measures in \eqref{e:thm1-sandwich} converge to the same measure $P_Y(dx) = h(x) \nu(dx)$. We will show this is indeed the case 
by using Lemmas \ref{l:weak-convergence} and \ref{l:f(X)-rv}, given in the Appendix.

\medskip
Since $\nu_n\stackrel{v}{\to} \nu$, as $n\to\infty$, by Lemma \ref{l:weak-convergence}, the measures in the right-hand side of \eqref{e:thm1-sandwich} 
converge to $h(x) \nu(dx)$, provided  \eqref{e:thm1-2}  holds uniformly in $x$ over all compact subsets of
$\overline\rr^d_D$. This is true, if \eqref{e:thm1-alt} is valid uniformly in $x$ over $K$, for all compacts $K\subset \overline\rr^d_D$.
Note that by \eqref{e:f-bdd-below}, the function $f(x)$ is uniformly bounded away from $0$ over the compact $K$.
Therefore, Lemma \ref{l:f(X)-rv} (iii) applied with $y:=f(x)$ to \eqref{e:thm1-alt} yields the desired uniform convergence. The argument showing 
that the left-hand side in \eqref{e:thm1-sandwich} converges to $h(x) \nu(dx)$ is similar.

\medskip
{\it To complete the proof}, it remains to show that, in the limit, no mass is lost at infinity, and the measure $P_Y(dx) = h(x)\nu(dx)$ given by 
\eqref{eq6} and \eqref{eq6-C} is a valid probability distribution on $\rr^d\setminus D$.  Note first that by assumption $\nu$ does not put any mass 
on the infinite hyperplanes, i.e.\  $\nu(\overline\rr_D^d\setminus \rr_D^d) = 0$. Thus, the support of the measure $P_Y$ is confined to 
$\rr^d\setminus D$.

Using the 1-homogenity of $f$ and the scaling property of $\nu$, we obtain for all $x\in\rr^d\setminus D$
\begin{equation*}
\nu\{ y : f(y)>f(x)\}  %= \nu\big(f(x) \{ z : f(z)>1\}\bigr) 
= \nu\{z : f(z)>1\} f(x)^{-\alpha} = C f(x)^{-\alpha}.
\end{equation*}
This shows that $h(x) = e^{-\nu \{ y : f(y)>f(x)\}} = e^{-C f(x)^{-\alpha}}$. Next, we establish the second 
expression for $C$ in \eqref{eq6-C}. Using the disintegration formula \eqref{eq3a}, we get
\begin{equation*}
\begin{split}
C = \nu\{z : f(z)>1\} &= \int_{S}\int_0^\infty 1_{\{z : f(z)>1\}}(\tau\theta)\,\frac{\alpha d\tau}{\tau^{\alpha+1}}\,\sigma(d\theta) \\
& = \int_{S}\int_{f(\theta)^{-1}}^\infty \frac{\alpha d\tau}{\tau^{\alpha+1}}\,\sigma(d\theta) \\
& = \int_{S}f(\theta)^{\alpha}\,\sigma(d\theta)<\infty.
\end{split}
\end{equation*}
As a by-product of the above computation, we obtain that the last integral is finite since it 
equals $C\equiv \nu\{f>1\}<\infty$, by Assumption F.

Finally, using \eqref{eq3a} again, we verify that \eqref{eq6} integrates to one
\begin{equation*}
\begin{split}
\int_{\rr^d\setminus D}e^{-Cf(x)^{-\alpha}}\,\nu(dx) &= \int_{S}\int_{(0,\infty)} e^{-Cf(\tau \theta)^{-\alpha}}\,\frac{\alpha d\tau}{\tau^{\alpha+1}}\, \sigma(d\theta) \\
&= \int_{S}\int_{(0,\infty)} e^{-C\tau^{-\alpha} f(\theta)^{-\alpha}}\,\frac{\alpha d\tau }{\tau^{\alpha+1}}\, \sigma(d\theta) \\
&=C^{-1}\int_{S}f(\theta)^{\alpha }\, \sigma(d\theta)\int_0^\infty e^{-u}\,du = 1,
\end{split}
\end{equation*}
where in the last line, we used the change of variables $u:= C\tau^{-\alpha} f(\theta)^{-\alpha}$ and the already established Relation \eqref{eq6-C}.
\end{proof}

\begin{remark} Suppose that $X \in RV_\alpha(D,\nu)$ and let $f$ be a {\em continuous} non-negative homogeneous function.
The requirement that $D = \{f=0\}$ following from Assumption F can be circumvented. 
Indeed, if $ D \subset \widetilde D := \{f=0\},$
then $X\in RV_\alpha(\widetilde D,\widetilde \nu)$, where $\widetilde \nu := \nu\vert_{\overline\rr^d\setminus \widetilde D}$ is the
restriction of $\nu$. Then, by the continuity of $f$, Assumptions F, H and C hold and hence  \eqref{eq5} is valid over the restricted space $\overline\rr^d_{\widetilde D}$.

It may happen, however, that ${\rm supp}(\nu) \subset \widetilde D$ and so trivially $\widetilde \nu = 0$. As seen in Example \ref{ex:Pareto-Dirichlet}, above,
essentially different measure of regular variation may arise on the restricted cone $\overline\rr^d\setminus\widetilde D$.
This shows that, in general, when focusing on $f$-implicit extremes, the natural domain of regular variation 
is $\overline\rr^d\setminus \{f=0\}.$ Finally, if $\{f=0\} \subset D$, the above argument may fail since $X$ may not be regularly 
varying in the larger cone $\overline \rr^d\setminus\{f=0\}$. 
\end{remark}

\begin{remark}\label{rem:continuity-at-infinity} It is important to note that Theorem \ref{thm1} may fail for {\em continuous} 
$f : \rr^d\to [0,\infty)$ that are, however, not continuous on the extended space $\overline \rr^d$.
Indeed, consider for example the $1-$homogeneous function $f(x_1,x_2):= \sqrt{x_1x_2},\ x_1,x_2\ge 0$,
defined as $0$ elsewhere.  Let also $X = (1/U_1\ 1/U_2)$ be as in Example \ref{ex:Pareto-Dirichlet} above,
where $U_1$ and $U_2$ are independent Uniform$(0,1)$. We have that $X \in RV_{\alpha}(\{n^2\},D,\nu)$,
where $\alpha = 2$, $D := \overline \rr^2 \setminus (0,\infty]^2$, and
$$
\nu(dx_1 dx_2) = x_1^{-2} x_2^{-2} dx_1 dx_2\ \ \mbox{ on }(0,\infty)^2.
$$
One may be tempted to conclude that \eqref{eq5} holds. Notice that for all $C>0$, we have
$$
 \int_{(0,\infty)^2} e^{-C f^{-\alpha}(x) } \nu(dx) = \int_0^\infty \int_0^\infty e^{-C x_1^{-1} x_2^{-1}} x_1^{-2} x_2^{-2} dx_1dx_2 = 
 \int_0^\infty \int_0^\infty e^{-C u_1 u_2} du_1 du_2 = \infty
$$
and therefore \eqref{eq6} does not define a valid probability distribution.
\end{remark}

\begin{defi} The limits arising in \eqref{eq5} will be referred to as {\em $(f,\nu)$-implicit extreme value} laws.
\end{defi}

We have the following probabilistic representation. Recall that a random variable 
$Z$ is said to be standard $\alpha$-Fr\'echet ($\alpha>0$), if $P(Z\le x) = e^{-x^{-\alpha}},\ x>0$.

\begin{prop}\label{p:thm1-structure}
The random vector $Y$ in $\rr^d\setminus D$ has an $(f,\nu)$-implicit extreme value law if and only if for some
measurable $g:S \to [0,\infty)$ with $\int_{S} g^\alpha(\theta) \sigma(d\theta) =1$, 
\begin{equation}\label{e:p:thm1-structure}
 Y \eqd Z \frac{\Theta}{g(\Theta)},
\end{equation}
where $Z$ standard $\alpha$-Fr\'echet and $\Theta$ is an independent of $Z$ random vector taking values in $S$
and having distribution $\sigma_g(d\theta):= g^{\alpha}(\theta)\sigma(d\theta)$. 

Moreover, the function $g$ in \eqref{e:p:thm1-structure} is unique, modulo $\sigma$-null sets
and, in the context of Theorem \ref{thm1}, it is given by $g(\theta) = C^{-1/\alpha}f(\theta),\ \theta\in S$. 
(Note that $P(g(\Theta) =0) =0$ and so \eqref{e:p:thm1-structure} is well--defined.)
\end{prop}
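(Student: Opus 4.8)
The plan is to go through the two implications separately, using Theorem \ref{thm1} and the disintegration formula \eqref{eq3a} as the main tools. For the ``only if'' direction, suppose $Y$ has an $(f,\nu)$-implicit extreme value law, so by \eqref{eq6} and \eqref{eq6-C}, $P_Y(dx) = e^{-Cf(x)^{-\alpha}}\nu(dx)$ with $C = \int_S f(\theta)^\alpha\,\sigma(d\theta)$. Set $g(\theta):= C^{-1/\alpha}f(\theta)$ on $S$; then $g$ is measurable, strictly positive (since $f_0 = f|_S$ is strictly positive by Assumption F via \eqref{e:f-polar}), and $\int_S g^\alpha\,d\sigma = C^{-1}\int_S f(\theta)^\alpha\,d\sigma = 1$. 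First I would compute the law of the right-hand side $W:= Z\,\Theta/g(\Theta)$ of \eqref{e:p:thm1-structure} directly in polar coordinates: writing $W = \tau(W)\theta(W)$, independence and $1$-homogeneity of $\tau$ give $\theta(W) = \theta(\Theta/g(\Theta)) = \Theta$ and $\tau(W) = Z/g(\Theta)$. Hence for $B\subset S$ and $r>0$,
\begin{equation*}
 P(\tau(W)>r,\ \theta(W)\in B) = \int_B P\bigl(Z > r\,g(\theta)\bigr)\,\sigma_g(d\theta) = \int_B e^{-r^{-\alpha}g(\theta)^{-\alpha}}g(\theta)^\alpha\,\sigma(d\theta).
\end{equation*}
Differentiating in $r$ (or recognizing the cylinder-set formula) shows that the pushforward of $P_W$ under $T$ has the form $\widetilde{P}_W(d\tau\,d\theta) = e^{-\tau^{-\alpha}g(\theta)^{-\alpha}}\,\alpha\tau^{-\alpha-1}\,d\tau\,g(\theta)^\alpha\sigma(d\theta)$. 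On the other hand, applying \eqref{eq3a} to $P_Y(dx) = e^{-Cf(x)^{-\alpha}}\nu(dx)$ and using $f(\tau\theta) = \tau f(\theta) = \tau C^{1/\alpha}g(\theta)$ gives exactly the same expression in $(\tau,\theta)$-coordinates. Since $T$ is a homeomorphism (hence a measurable isomorphism) between $\rr^d\setminus D$ and $(0,\infty)\times S$, equality of the pushforwards yields $Y\eqd W$, proving \eqref{e:p:thm1-structure}.

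For the ``if'' direction, reverse the computation: given any measurable $g:S\to[0,\infty)$ with $\int_S g^\alpha\,d\sigma = 1$, the requirement $\int_S g^\alpha\,d\sigma = 1$ forces $g>0$ $\sigma$-a.e., so $\Theta/g(\Theta)$ is well defined a.s.; the same change-of-variables identity shows that the law of $Z\,\Theta/g(\Theta)$ has density $e^{-\tau^{-\alpha}g(\theta)^{-\alpha}}$ with respect to $\alpha\tau^{-\alpha-1}d\tau\,\sigma(d\theta)$, i.e.\ with respect to $\nu$ (in polar form) it equals $e^{-f_g(x)^{-\alpha}}\nu(dx)$ where $f_g(\tau\theta):=\tau g(\theta)$ is a $1$-homogeneous loss with $\{f_g=0\}=D$ and $\int_S f_g(\theta)^\alpha\sigma(d\theta)=1$. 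This is of the form \eqref{eq6}–\eqref{eq6-C} with $f$ replaced by $f_g$ (normalized so $C=1$); by Theorem \ref{thm1} applied to $f_g$ (which satisfies Assumptions H, F, C automatically once $g$ is, say, taken to be a.e.\ positive — one should remark here that for the ``if'' direction we only need that $e^{-f_g^{-\alpha}}\nu$ is a probability measure, which the computation verifies, rather than a full DOA statement), $Z\,\Theta/g(\Theta)$ has an $(f_g,\nu)$-implicit extreme value law, hence an $(f,\nu)$-implicit extreme value law in the sense of the definition.

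Finally, for uniqueness of $g$ modulo $\sigma$-null sets: if $g_1,g_2$ both satisfy \eqref{e:p:thm1-structure} for the same $Y$, then the two polar densities $e^{-\tau^{-\alpha}g_i(\theta)^{-\alpha}}$ agree $\nu$-a.e., hence for $\sigma$-a.e.\ $\theta$ they agree for Lebesgue-a.e.\ $\tau\in(0,\infty)$; since $\tau\mapsto e^{-\tau^{-\alpha}c^{-\alpha}}$ determines $c>0$, we get $g_1=g_2$ $\sigma$-a.e. The identification $g(\theta)=C^{-1/\alpha}f(\theta)$ in the setting of Theorem \ref{thm1} then follows by matching \eqref{eq6} with the density just computed. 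I expect the main obstacle to be purely bookkeeping: carefully justifying the change of variables on $(0,\infty)\times S$ and ensuring the exceptional $\sigma$-null set where $g$ might vanish is handled (the normalization $\int g^\alpha d\sigma = 1$ together with finiteness of $\sigma$ does not by itself force $g>0$ everywhere, but it does force $P(g(\Theta)=0)=0$ since $\sigma_g$ puts no mass there, which is all that is needed for \eqref{e:p:thm1-structure} to be well defined). No genuinely hard estimate is involved beyond what Theorem \ref{thm1} already supplies.
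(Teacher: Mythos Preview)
Your forward direction ($\Rightarrow$) is essentially identical to the paper's: both pass to polar coordinates via the disintegration formula \eqref{eq3a}, recognize the Fr\'echet cdf in the radial integral, and conclude by a $\pi$-system argument (you phrase it as equality of pushforwards under the homeomorphism $T$; the paper works with rectangles $A_{r,B}$ and the $\pi$--$\lambda$ theorem). Your uniqueness argument via comparison of polar densities is also correct; the paper's version is a little quicker, since it just reads off the marginal $P(\theta(Y)\in B)=\int_B g^\alpha\,d\sigma$, which pins down $g^\alpha$ as the Radon--Nikodym derivative $d\sigma_g/d\sigma$.

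The converse ($\Leftarrow$) has a gap as written. You correctly compute that the law of $W=Z\,\Theta/g(\Theta)$ equals $e^{-f_g^{-\alpha}}\,\nu$ with $f_g(\tau\theta):=\tau g(\theta)$, but the \emph{definition} of an $(f,\nu)$-implicit extreme value law is ``arises as a limit in \eqref{eq5}'', not merely ``has a density of the form \eqref{eq6}''. To close the loop you try to invoke Theorem~\ref{thm1} for $f_g$; however, for an arbitrary measurable $g$ the function $f_g$ need not satisfy Assumptions~F or~C (these are topological conditions on $f$, not measure-theoretic ones), so that route does not go through without extra hypotheses you have not imposed. The paper sidesteps this cleanly: with $X_i:=Z_i\Theta_i/g(\Theta_i)$ and $f:=f_g$ one has $f(X_i)=Z_i$ by homogeneity, so $k(n)=\operatorname{argmax}_i Z_i$ depends only on the $Z_i$'s and is independent of the $\Theta_i$'s; hence
\[
X_{k(n)}\stackrel{d}{=}\Bigl(\bigvee_{i=1}^n Z_i\Bigr)\frac{\Theta_1}{g(\Theta_1)}\stackrel{d}{=}n^{1/\alpha}X,
\]
so $n^{-1/\alpha}X_{k(n)}\Rightarrow X$ trivially and $X$ is an implicit extreme value limit with no appeal to Theorem~\ref{thm1} at all. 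Inserting this one-line observation in place of your Theorem~\ref{thm1} invocation repairs your argument.
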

\begin{proof} ($\Rightarrow$) Suppose first that $Y$ is a $(f,\nu)$-implicit extreme value, that is, \eqref{eq5} holds for
some $\alpha>0$ and $1$-homogeneous function $f$. In view of the disintegration formula \eqref{eq3a} of the
measure $\nu$, the law of $Y$ in \eqref{eq6} has the following representation in polar coordinates
$$
P_Y(d \tau \sigma(d\theta)) = e^{-C f(\tau \theta)^{-\alpha}} \frac{\alpha d\tau }{\tau ^{\alpha+1} } \sigma(d\theta).
$$
Consider an arbitrary `rectangle' in polar coordinates, i.e.\ $A_{r,B} := 
\{ x\in \rr^d\setminus D \, :\, \tau \le r,\ \theta(x) \in B\}$, for $r>0$ and
a Borel set $B\subset S$. We have that
\begin{eqnarray} \label{e:p:thm1-structure-1}
P(Y\in A_{r,B}) &=& \int_{S}\int_{0}^\infty 1_{A_{r,B}}(\tau\theta) 
 e^{-C f( \tau\theta)^{-\alpha}} \frac{\alpha d\tau }{\tau^{\alpha+1}} \sigma(d\theta)\nonumber\\
 &=& \int_{B}\int_0^{r} e^{-Cf(\theta)^{-\alpha} \tau^{-\alpha}} \frac{\alpha d\tau}{\tau^{\alpha+1}} \sigma(d\theta)\nonumber\\
 & = & \int_{B} \int_{{C f(\theta)^{-\alpha} r^{-\alpha}}}^\infty e^{-u} du\ C^{-1} f(\theta)^{\alpha} \sigma(d\theta), 
\end{eqnarray}
where in the second relation we used the homogeneity of $f$ and in the last relation, we made the change of variables 
$u:= C f(\theta)^{-\alpha} \tau^{-\alpha}$. Note that the inner integral in \eqref{e:p:thm1-structure-1} equals 
$$
  e^{-C f(\theta)^{-\alpha} r^{-\alpha}} = P(Z C^{1/\alpha} f(\theta)^{-1}\le r),
$$
for a standard $\alpha$-Fr\'echet variable $Z$. We therefore obtain
\begin{equation}\label{e:p:thm1-structure-2}
 P(Y\in A_{r,B}) \equiv P( \tau(Y) \le r,\ \theta(Y) \in B) = \int_{B} P(  Z C^{1/\alpha} f(\theta)^{-1} \le r )  \widetilde \sigma(d\theta),
\end{equation}
where $\widetilde \sigma(d\theta) := C^{-1} f(\theta)^{\alpha} \sigma(d\theta).$ Observe that the choice of the 
constant $C$ ensures that $\widetilde \sigma (d\theta)$ is a probability distribution on $S$.  

Suppose now that $\Theta$ is an independent of $Z$,  $S$-valued random vector with probability distribution 
$\widetilde \sigma$. Using the independence of $Z$ and $\Theta$, we see that the right-hand side of \eqref{e:p:thm1-structure-2} equals
$P( Z C^{1/\alpha} f(\Theta)^{-1} \in A_{r,B})$. This shows that the distributions of $Y$ and 
$Z \Theta/ g(\Theta)$ coincide on the class of sets $A_{r,B},\ r>0,\ B\in {\mathcal B}(S)$,
where
 $$
  g(\theta) := C^{-1/\alpha} f(\theta),\ \theta\in S. 
 $$
Since the latter class is a $\pi$-system, generating the Borel $\sigma$-algebra on $\rr^d\setminus D$, the $\pi$-$\lambda$ theorem 
shows that \eqref{e:p:thm1-structure} holds.

\medskip
($\Leftarrow$)  Conversely, for an arbitrary non-negative measurable function $g:S\to [0,\infty)$ with 
$\int_{S} g(\theta)^\alpha \sigma(\theta) = 1$, let $f(x) := \tau(x) g(x/\tau(x)) \equiv \tau g(\theta)$ 
be a $1$-homogeneous function. Consider the random vector
$$
X:= Z \frac{\Theta}{g(\Theta)},
$$
where $Z$ and $\Theta$ are independent with standard $\alpha$-Fr\'echet and $\sigma_g$ laws, respectively. Let
$(Z_i,\Theta_i)$, $1\le i\le n$ be independent copies of $(Z,\Theta)$. By homogeneity
$$
f(X_i) = f{\Big(} Z_i{\Theta_i\over g(\Theta_i)} {\Big)} = Z_i \frac{g(\Theta_i)}{g(\Theta_i)} = Z_i,\ 1\le i\le n. 
$$
That is, $f(X_i),\ 1\le i\le n$ are iid $\alpha$-Fr\'echet, that do not depend on the directions $\Theta_i = \theta(X_i)$ 
of the vectors $X_i$. Hence the random variable $k(n)$ in \eqref{e:k(n)-def} is independent of $\Theta_i,\ 1\le i\le n$ and
\begin{equation}\label{e:imp-max-stab-motivation}
 X_{k(n)} = Z_{k(n)}  \frac{\Theta_{k(n)}}{g(\Theta_{k(n)})} \stackrel{d}{=} {\Big(} \bigvee_{i=1}^n Z_i {\Big)} 
 \frac{\Theta_{1}}{g(\Theta_{1})} \eqd n^{1/\alpha} X,
\end{equation}
where in the last relation we used the fact that $\vee_{i=1}^n Z_i \eqd n^{1/\alpha} Z$.
 Relation \eqref{e:imp-max-stab-motivation} shows that \eqref{eq5} holds trivially in this case, where
 $a_n:=n^{1/\alpha}$ and $Y \eqd X$. That is, any $Y$ as in \eqref{e:p:thm1-structure} can be 
 a limit in \eqref{eq5}.

To complete the proof, it remains to show that the function $g$ in \eqref{e:p:thm1-structure} is unique.
By letting $r\to\infty$ in \eqref{e:p:thm1-structure-1}, we see that
$$
P(\theta(Y) \in B) = \int_B C^{-1} f(\theta)^{\alpha} \sigma(t\theta),
$$
for all Borel $B\subset S$. This uniquely identifies $g$ as $g(\theta) = C^{-1/\alpha} f(\theta),\ \theta\in S$, 
modulo $\sigma$-null sets.
 \end{proof}

\begin{remark} \label{rem:composition}
 Observe that \eqref{e:k(n)-def} remains unchanged if $f$ is replaced by $\psi\circ f$, for any monotone
 strictly increasing function $\psi$. This shows that the result of Theorem \ref{thm1} automatically
 extends to functions $f$ such that $\psi^{-1}\circ f$ is $1$-homogeneous and satisfies the assumptions of the 
 theorem.
\end{remark}

The following result shows that $(f,\nu)$-implicit max-stable laws appearing in Theorem \ref{thm1}
are also in the class $RV_\alpha(D,\nu)$, as expected.

\begin{cor}\label{c:thm1} If  $Y$ is an $(f,\nu)$-implicit extreme value random vector as in Theorem \ref{thm1}, then
$Y\in RV_\alpha(\{n^{1/\alpha}\}, D, \nu)$. In fact, for all $x>0$, we have 
\begin{equation}\label{e:c:thm1}
 nP(n^{-1/\alpha} \tau(Y) > x) \mathop{\longrightarrow}_{n\to\infty} \sigma_S(S) x^{-\alpha}\ \ \mbox{ and }\  \ P(\theta(Y) \in \cdot \vert \tau (Y) > u)
 \mathop{\stackrel{TV}{\longrightarrow}}_{u\to\infty} \sigma_0(\cdot),
\end{equation}
where $\sigma_S$ is as in \eqref{e:fact:disintegration} and $\sigma_0(\cdot) = \sigma_S(\cdot)/\sigma_S(S)$.
\end{cor}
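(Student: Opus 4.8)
The plan is to verify the two convergences in \eqref{e:c:thm1} directly from the explicit stochastic representation of $Y$ given in Proposition \ref{p:thm1-structure}, and then appeal to Proposition \ref{p:RV-polar} to conclude that $Y\in RV_\alpha(\{n^{1/\alpha}\},D,\nu)$ with the stated spectral measure. Recall from Proposition \ref{p:thm1-structure} that $Y\eqd Z\,\Theta/g(\Theta)$ with $Z$ standard $\alpha$-Fr\'echet, $\Theta\sim\sigma_g$ with $\sigma_g(d\theta)=g^\alpha(\theta)\sigma(d\theta)$, and $g(\theta)=C^{-1/\alpha}f(\theta)$; in particular $g>0$ $\sigma_g$-a.s., so $\tau(Y)=Z/g(\Theta)$ and $\theta(Y)=\Theta$ by the $1$-homogeneity of $\tau$ (and since $\theta(\lambda v)=\theta(v)$ for $\lambda>0$).

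First I would compute the radial tail. Conditioning on $\Theta$ and using $P(Z>t)=1-e^{-t^{-\alpha}}\sim t^{-\alpha}$ as $t\to\infty$,
\begin{equation*}
 nP(n^{-1/\alpha}\tau(Y)>x) = n\,\Exp\bigl[\,P(Z> n^{1/\alpha} x\, g(\Theta)\mid \Theta)\,\bigr].
\end{equation*}
For each fixed $\theta$ with $g(\theta)>0$ one has $n\,P(Z>n^{1/\alpha}x\,g(\theta)) = n(1-e^{-(n^{1/\alpha}xg(\theta))^{-\alpha}})\to x^{-\alpha}g(\theta)^{-\alpha}$; moreover $n(1-e^{-t}) \le nt$ gives the uniform bound $n\,P(Z>n^{1/\alpha}xg(\Theta)) \le x^{-\alpha}g(\Theta)^{-\alpha}$, and $\Exp[g(\Theta)^{-\alpha}] = \int_S g(\theta)^{-\alpha}g^\alpha(\theta)\sigma(d\theta)=\sigma(S)<\infty$. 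Hence dominated convergence yields
\begin{equation*}
 nP(n^{-1/\alpha}\tau(Y)>x)\longrightarrow x^{-\alpha}\,\Exp[g(\Theta)^{-\alpha}] = x^{-\alpha}\,\sigma_S(S),
\end{equation*}
using $C=\int_S f(\theta)^\alpha\sigma(d\theta)$ and $g^\alpha=C^{-1}f^\alpha$ to identify $\Exp[g(\Theta)^{-\alpha}]=\int_S\sigma(d\theta)=\sigma_S(S)$. This is the first display in \eqref{e:c:thm1}.

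Next I would handle the angular component. For a Borel set $B\subset S$,
\begin{equation*}
 P(\theta(Y)\in B\mid \tau(Y)>u) = \frac{\Exp\bigl[1_B(\Theta)\,P(Z>u\,g(\Theta)\mid\Theta)\bigr]}{\Exp\bigl[P(Z>u\,g(\Theta)\mid\Theta)\bigr]}
 = \frac{\Exp\bigl[1_B(\Theta)\,\phi_u(\Theta)\bigr]}{\Exp\bigl[\phi_u(\Theta)\bigr]},
\end{equation*}
where $\phi_u(\theta):= u^\alpha P(Z>u\,g(\theta)) = u^\alpha(1-e^{-(ug(\theta))^{-\alpha}})$. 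As $u\to\infty$, $\phi_u(\theta)\to g(\theta)^{-\alpha}$ pointwise (and $=0$ when $g(\theta)=0$), with domination $\phi_u(\theta)\le g(\theta)^{-\alpha}\in L^1(\sigma_g)$; so the numerator tends to $\int_B g(\theta)^{-\alpha}\sigma_g(d\theta)=\int_B\sigma(d\theta)=\sigma_S(B)$ and the denominator to $\sigma_S(S)$, giving convergence to $\sigma_0(B):=\sigma_S(B)/\sigma_S(S)$ for every Borel $B$. Convergence of the measures $P(\theta(Y)\in\cdot\mid\tau(Y)>u)$ to $\sigma_0$ for \emph{every} Borel set (not just continuity sets) is convergence in total variation, since these are probability measures on $S$ and pointwise convergence on all measurable sets is equivalent to TV convergence. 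With both limits in \eqref{e:c:thm1} established, Proposition \ref{p:RV-polar} (applied with $C=\sigma_S(S)$ and $\sigma_0$ the normalized spectral measure) immediately gives $Y\in RV_\alpha(\{n^{1/\alpha}\},D,\nu)$, the spectral measure being $\sigma_S=\sigma_S(S)\,\sigma_0$.

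I expect no serious obstacle here: the only points requiring a little care are (i) justifying the dominated-convergence interchanges, which is routine given the clean bound $1-e^{-t}\le t$ and the integrability of $g^{-\alpha}$ against $\sigma_g$; and (ii) noting that Proposition \ref{p:RV-polar} is stated with weak convergence of the angular conditional law while here we obtain the stronger total-variation convergence — this is only an upgrade, so it is harmless, and in fact parallels exactly the total-variation conclusion of Lemma \ref{l:Breiman}, which one could alternatively invoke by writing $Y\eqd Z\cdot(\Theta/g(\Theta))$ as a product $ZV$ with $V:=\Theta/g(\Theta)$ and checking $\Exp[\tau^\alpha(V)]=\Exp[g(\Theta)^{-\alpha}]=\sigma_S(S)<\infty$.
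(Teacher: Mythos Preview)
Your argument is essentially the paper's own: the paper simply invokes Lemma~\ref{l:Breiman} with $V=\Theta/g(\Theta)$, which is precisely the alternative you spell out in your last sentence, and your ``direct'' computation just reproduces the proof of that lemma in this special case.

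There is, however, one genuine slip. You write that ``pointwise convergence on all measurable sets is equivalent to TV convergence.'' This is false: setwise convergence of probability measures is strictly weaker than total-variation convergence (e.g.\ take densities $1+\sin(2\pi n x)$ on $[0,1]$, which converge setwise to Lebesgue measure by Riemann--Lebesgue but stay at TV-distance $1/\pi$). What your computation actually shows is that the conditional laws have densities $\phi_u(\theta)/\Exp[\phi_u(\Theta)]$ with respect to $\sigma_g$, and these densities converge $\sigma_g$-a.e.\ to the probability density $g(\theta)^{-\alpha}/\sigma_S(S)$. The TV convergence then follows from Scheff\'e's lemma (stated in the paper as Lemma~\ref{l:Scheffe}), exactly as in the proof of Lemma~\ref{l:Breiman}. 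With this correction your proof is complete; alternatively, and more economically, just cite Lemma~\ref{l:Breiman} as you yourself suggest.
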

\begin{proof} The result readily follows from the Breiman-type Lemma \ref{l:Breiman}, above, applied to $X:= Z V$, where
$V := \Theta/g(\Theta)$. Note that now the law $P_V$ of $V$ is concentrated on the deformed 
unit sphere $\{\theta/g(\theta)\, :\, \theta \in S\}$. 
\end{proof}

\begin{remark} If $X\in  RV_\alpha(\{a_n\}, D, \nu)$, then $X\in RV_\alpha(c\{a_n\},D, c^{-\alpha} \nu)$, for all 
$c>0$. Thus, upon rescaling, we can always ensure that the spectral measure is a probability measure.
\end{remark}

The next result shows the uniqueness of the stochastic representation of the implicit extreme value laws.

\begin{cor}\label{c:uniqueness} The representation in \eqref{e:p:thm1-structure} is unique. More precisely,
if $(\alpha,g,\sigma_g)$ and $(\widetilde \alpha, \widetilde g, \widetilde \sigma_{\widetilde g})$ are two triplets parameterizing 
the right-hand side therein, then $\alpha = \widetilde \alpha$, $\sigma_S = \widetilde \sigma_S$, and $g = \widetilde g$ (mod $\sigma_S$). 
\end{cor}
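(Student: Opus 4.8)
The plan is to argue that each of $\alpha$, $\sigma_S$ and $g$ is a functional of the law of $Y$ alone, so that the two triplets must coincide. Write $Y\eqd Z\Theta/g(\Theta)$ as in \eqref{e:p:thm1-structure}, with $Z$ standard $\alpha$-Fr\'echet, $\Theta\sim\sigma_g=g^\alpha\sigma_S$, and $Z$ independent of $\Theta$. Since $\tau$ is $1$-homogeneous with $\tau(\Theta)=1$ on $S$ and $g(\Theta)>0$ almost surely (as $\sigma_g(\{g=0\})=0$), we get $\tau(Y)=Z/g(\Theta)$ and $\theta(Y)=\Theta$. Hence the joint law of $(\tau(Y),\theta(Y))$ equals that of $(Z/g(\Theta),\Theta)$; in particular $\theta(Y)\sim\sigma_g$, and conditionally on $\theta(Y)=\theta$ the radial part $\tau(Y)$ is $\alpha$-Fr\'echet with scale $g(\theta)^{-1}$.

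Next I would recover $\alpha$ and $\sigma_S$ from Corollary \ref{c:thm1}, which gives $Y\in RV_\alpha(\{n^{1/\alpha}\},D,\nu)$. Since $\tau(Y)$ then has a regularly varying right tail with index $\alpha$, and the tail index of a nonnegative random variable is determined by its law, $\alpha=\widetilde\alpha$. Once $\alpha$ is fixed, the vague limit $\nu=\lim_n nP(n^{-1/\alpha}Y\in\cdot)$ is unique, and $\sigma_S$ is read off from $\nu$ through the fixed polar coordinates via \eqref{e:sigma}, i.e.\ $\sigma_S(B)=\nu\{x:\tau(x)>1,\ \theta(x)\in B\}$; hence $\sigma_S=\widetilde\sigma_S$. (Equivalently, $\sigma_S$ can be exhibited directly from $P_Y$: using $P(Z>x)=1-e^{-x^{-\alpha}}$ and dominated convergence with the $\sigma_g$-integrable majorant $g^{-\alpha}$, valid since $\int_S g^{-\alpha}\,d\sigma_g\le\sigma_S(S)<\infty$, one gets $u^\alpha P(\theta(Y)\in B,\ \tau(Y)>u)\to\int_B g(\theta)^{-\alpha}\,\sigma_g(d\theta)=\sigma_S(B)$ as $u\to\infty$.)

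It then remains to recover $g$. By the first paragraph the law of $\theta(Y)$ is $\sigma_g=g^\alpha\sigma_S$, so $\sigma_g\ll\sigma_S$ with Radon--Nikodym density $g^\alpha$. Since $\alpha$ and $\sigma_S$ are already pinned down, $g^\alpha=d\sigma_g/d\sigma_S$ is determined $\sigma_S$-almost everywhere, and taking nonnegative $\alpha$-th roots yields $g=\widetilde g$ mod $\sigma_S$, which completes the proof.

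The step I expect to require the most care is the recovery of $\sigma_S$: what is directly visible from $P_Y$ is the angular law $\sigma_g$, and $\sigma_g$ determines $\sigma_S$ only after untilting by $g^\alpha$, which forces one to know that $g>0$ holds $\sigma_S$-almost everywhere. In the setting of Theorem \ref{thm1} this is automatic, since $g=C^{-1/\alpha}f$ and Assumption F makes $f$ bounded away from $0$ on every compact subset of $\overline\rr^d_D$, hence $f>0$ on all of $S\subset\overline\rr^d_D$; this is precisely why the argument routes the recovery of $\sigma_S$ through the full regular-variation structure of $Y$ (Corollary \ref{c:thm1}) rather than through the angular law alone. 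Everything else is routine computation with the Fr\'echet tail.
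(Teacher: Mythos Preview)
Your proof is correct and follows essentially the same line as the paper: invoke Corollary~\ref{c:thm1} to pin down $\alpha$ and $\sigma_S$ from the regular-variation structure of $Y$, then use $\theta(Y)\stackrel{d}{=}\Theta$ to identify $\sigma_g$ and hence $g^\alpha=d\sigma_g/d\sigma_S$ modulo $\sigma_S$. Your additional commentary on the $g>0$ issue and the alternative direct limit for $\sigma_S$ is sound but not needed for the argument.
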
 
\begin{proof}  Suppose that
\begin{equation}\label{e:c:uniqueness}
  Y\stackrel{d}{=} Z \frac{\Theta}{g(\Theta)} \stackrel{d}{=} \widetilde Z \frac{\widetilde \Theta}{g(\widetilde \Theta)},
\end{equation}
where the tilded quantities correspond to the stochastic representation as in 
\eqref{e:p:thm1-structure} with parameters $(\widetilde \alpha, \widetilde g, \widetilde \sigma_{\widetilde g})$. 
By Corollary \ref{c:thm1}, we have $\alpha = \widetilde \alpha$ and $\sigma_S = \widetilde \sigma_S$. 
On the other hand, by
\eqref{e:c:uniqueness},  
$$
 \theta(Y) \stackrel{d}{=} \theta{\Big(} Z \frac{\Theta}{g(\Theta)} {\Big)} = \Theta \stackrel{d}{=} \widetilde \Theta,
$$
and hence $\sigma_g = \widetilde\sigma_{\widetilde g}$, which yields $g = \widetilde g$ (mod $\sigma_S\equiv \widetilde \sigma_S$).
\end{proof}

\begin{remark} What happens with the stochastic representation in \eqref{e:p:thm1-structure} under another set of
polar coordinates $(\tau^*,\theta^*)$? Let $S^* = \{ \tau^* =1\}$ and define 
the natural bijection $\lambda: S\to S^*$, where $\lambda(\theta) = \theta/\tau^* (\theta)$ is simply a rescaled version of 
the vector $\theta$. Suppose that \eqref{e:p:thm1-structure} holds and observe that by homogeneity,
\begin{equation}\label{e:new-coors}
Z \frac{\Theta}{g(\Theta)} = Z \frac{\Theta/\tau^* (\Theta)}{g(\Theta/\tau^*(\Theta))} %= Z \frac{\lambda(\Theta)}{g(\lambda(\Theta))} 
=: Z\frac{\Theta^*}{g(\Theta^*)},\ \ \mbox{ surely (not just almost surely)}.
\end{equation}
Observe that $Z$ and $\Theta^*:=\Theta/\tau^*(\Theta)$ are independent and $\Theta^*$ takes values in the new unit sphere $S^*$. 
The uniqueness of the stochastic representation (Corollary \ref{c:uniqueness}) then implies that the right-hand side 
\eqref{e:new-coors} provides the stochastic representation of $Y$ with respect to the new polar coordinates.

It is remarkable that the relationship between the two stochastic representations is deterministic. That is, the two involve
the same $\alpha$-Fr\'echet random variable and {\em the same} directional component vector 
$\Theta/g(\Theta) \equiv \Theta^*/g(\Theta^*)$. This shows that the representation in \eqref{e:p:thm1-structure}
does not depend on the choice of polar coordinates.
\end{remark}

\section{Implicit max-stable laws and their domains of attraction}\label{sec:DA}

Relation \eqref{e:imp-max-stab-motivation} in the proof of Proposition \ref{p:thm1-structure} suggests the following notion of
{\em $f$-implicit max-stable} distributions. 

\begin{defi} \label{f-imp-def}An $\rr^d$-valued random vector $X$ is said to be implicit max-stable with respect to a homogeneous
function $f$, or simply {\it $f$-implicit max-stable}, if for all $n$, there exist $a_n>0$  such that
\begin{equation}\label{eqp1}
 a_n^{-1} X_{k(n)} \stackrel{d}{=} X,
\end{equation}
where $k(n)$ is as in \eqref{e:k(n)-def}, and  $X_i,\ 1\le i\le n$ are independent copies of $X$.
\end{defi}

%Relation \eqref{e:imp-max-stab-motivation} shows that all {\it $(f,\nu)$-implicit extreme value laws} are also 
%{\it $f$-implicit max-stable}. The next result shows the converse is true for {\it continuous} functions $f$.

By \eqref{e:imp-max-stab-motivation}, all $(f,\nu)$-implicit extreme value laws are also $f$-implicit max-stable. 
Under the mild additional assumption that $f$ is continuous, the converse is also true, as shown next.

\begin{theorem}\label{thm_f} Let $f : \overline\rr^d \to [0,\infty]$ be non-negative, continuous and $1$-homogeneous 
function such that $f(x)<\infty,\ x\in \rr^d$. Then, a distribution is strictly $f$-implicit max-stable if and only if it 
is a $(f,\mu)$-implicit extreme value distribution, where $\mu$ is supported on $\rr^d\setminus \{f=0\}$ and 
satisfies the scaling property  $\mu(\lambda \cdot)= \lambda^{-\alpha}\mu(\cdot)$ for all $\lambda>0$ and some 
$\alpha>0$.
\end{theorem}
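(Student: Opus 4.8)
The plan is to prove both directions by reduction to Theorem \ref{thm1} and Proposition \ref{p:thm1-structure}. The ``if'' direction is essentially already done: if a distribution is an $(f,\mu)$-implicit extreme value distribution with $\mu$ scaling by exponent $\alpha$ and supported off $\{f=0\}$, then by Proposition \ref{p:thm1-structure} it admits the representation $Y \eqd Z\,\Theta/g(\Theta)$, and the computation \eqref{e:imp-max-stab-motivation} shows directly that $a_n^{-1}X_{k(n)}\eqd X$ with $a_n = n^{1/\alpha}$. So the real content is the ``only if'' direction: every strictly $f$-implicit max-stable law arises this way.

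For the forward direction, suppose $X$ is strictly $f$-implicit max-stable, so $a_n^{-1}X_{k(n)}\eqd X$ for all $n$. First I would analyze the scalars $Y_n := f(X_{k(n)}) = \max_{i\le n} f(X_i)$: by $1$-homogeneity of $f$, applying $f$ to both sides of \eqref{eqp1} gives $a_n^{-1}\max_{i\le n}f(X_i) \eqd f(X)$, so $f(X)$ is (classically) strictly max-stable as a nonnegative random variable, hence standard $\alpha$-Fr\'echet up to scale for some $\alpha>0$, and $a_n = n^{1/\alpha}$. In particular $f(X)>0$ a.s., so $P_X$ is supported on $\rr^d\setminus\{f=0\}$, and $n^{-1/\alpha}\max_{i\le n}f(X_i)$ has a nondegenerate limit. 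The next and main step is to upgrade this to a regular-variation statement for $X$ itself on the cone $\rr^d\setminus\{f=0\}$: I would use Lemma \ref{lem1} (the sandwich \eqref{eq1-new}) in reverse. Iterating \eqref{eqp1} along $n=2^k$ and using the exact expression \eqref{eq2-new} (valid since $G$, the d.f.\ of $f(X)$, is continuous — it is Fr\'echet), one gets that $P_X$ is a fixed point of the map $\mu\mapsto $ (law of $n^{-1/\alpha}X_{k(n)}$); writing this fixed-point equation out via \eqref{eq2-new} and letting $n\to\infty$ should force $P_X(dx)$ to have the form $e^{-Cf(x)^{-\alpha}}\mu(dx)$ for a measure $\mu$ with the scaling property. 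Concretely, from $G$ Fr\'echet one has $G(f(a_nx))^{n-1} = \exp\{(n-1)\log G(n^{1/\alpha}f(x))\}\to \exp\{-c f(x)^{-\alpha}\}$; substituting into the identity $P_X = n\,G(f(\cdot))^{n-1}P_X$ (applied to $a_n^{-1}X$, using self-similarity) and comparing with \eqref{e:d:RV-D} identifies the vague limit $n P(a_n^{-1}X\in\cdot)\to\mu$ with $\mu(dx) = e^{Cf(x)^{-\alpha}}P_X(dx)$, which is Radon on $\overline\rr^d_{\{f=0\}}$ because $f$ continuous implies $\{f>\epsilon\}$ is bounded away from $\{f=0\}=D$. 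Thus $X\in RV_\alpha(\{n^{1/\alpha\}},\{f=0\},\mu)$, the scaling property $\mu(\lambda\cdot)=\lambda^{-\alpha}\mu(\cdot)$ follows from \eqref{e:nu-scaling}, and Assumptions H, F, C hold automatically by continuity of $f$ (Remark after Theorem \ref{thm1}). Theorem \ref{thm1} then applies and yields that $X$ is exactly the $(f,\mu)$-implicit extreme value law, closing the loop.

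The step I expect to be the main obstacle is the rigorous passage from the fixed-point identity to the vague convergence defining $\mu$: one must show that the pointwise limit $e^{-Cf(x)^{-\alpha}}$ of $G(f(a_nx))^{n-1}$ genuinely gives $n P(a_n^{-1}X\in A)\to\mu(A)$ on $\nu$-continuity sets bounded away from $D$, which requires controlling the measures $n P_{a_n^{-1}X}$ uniformly — a priori we only know the sequence is tight on such sets, not that it converges. The clean way around this is to argue that along $n=2^k$ the identity $P_X(dx) = 2^{k}\,G(f(2^{k/\alpha}x))^{2^k-1}\,P_{2^{-k/\alpha}X}(dx)$ (this is \eqref{eq2-new} composed with the self-map) pins down $P_{2^{-k/\alpha}X}$ as $2^{-k}\,G(f(2^{k/\alpha}x))^{-(2^k-1)}P_X(dx)$, whose total mass on $\{f>\epsilon\}$ is bounded, and then extract a vague limit point $\mu$; continuity of $f$ gives $\mu(\partial\{f>t\})=0$ for a.e.\ $t$, enough to pass to the limit and verify $\mu$ satisfies the scaling relation and $P_X(dx)=e^{-Cf(x)^{-\alpha}}\mu(dx)$. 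Uniqueness of the limit point then follows from Corollary \ref{c:uniqueness}. I would also note the minor point that ``strictly'' max-stable (no centering, pure scaling) is what makes $a_n=n^{1/\alpha}$ and avoids the Gumbel/Weibull alternatives for $f(X)$.
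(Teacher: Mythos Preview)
Your proposal is correct and follows essentially the same route as the paper: deduce that $f(X)$ is $\alpha$-Fr\'echet with $a_n=n^{1/\alpha}$, use Lemma~\ref{lem1} (with $G$ continuous) to write the fixed-point identity $P_X(dx)=g_n(x)\,\mu_n(dx)$ where $\mu_n:=nP_{n^{-1/\alpha}X}$ and $g_n(x)=e^{-(1-n^{-1})Cf(x)^{-\alpha}}$, then invert the density to get $\mu_n=h_n\,P_X$ and pass to the limit. Your worry about the ``main obstacle'' is unfounded, and the detour through $n=2^k$, subsequential limit points, and Corollary~\ref{c:uniqueness} is unnecessary: since $h_n(x)=e^{(1-n^{-1})Cf(x)^{-\alpha}}\to e^{Cf(x)^{-\alpha}}$ uniformly on compacts of $\rr^d\setminus\{f=0\}$ and the base measure $P_X$ does not depend on $n$, Lemma~\ref{l:weak-convergence} applied in the trivial case $\nu_n\equiv\nu:=P_X$ gives $\mu_n\stackrel{v}{\to}\mu:=e^{Cf(\cdot)^{-\alpha}}P_X$ directly for the full sequence, after which $P_X(dx)=e^{-Cf(x)^{-\alpha}}\mu(dx)$ is immediate.
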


\begin{proof} ($\Leftarrow$): By the continuity of $f$, the assumptions of Theorem \ref{thm1} hold, and the claim follows from
Relation  \eqref{e:imp-max-stab-motivation} in the proof of Proposition \ref{p:thm1-structure}.

\medskip
($\Rightarrow$): Assume that \eqref{eqp1} holds. By the homogeneity of $f$ and the definition of $k(n)$, Relation \eqref{eqp1} implies 
\[ 
 a_n^{-1}f(X_{k(n)})=a_n^{-1}\max\bigl\{f(X_1),\dots,f(X_n)\bigr\}\eqd f(X) 
\]
for all $n\geq 1$. Then, $f(X)$ is a max-stable random variable supported on $[0,\infty).$ Hence, by classical extreme value theory, we know that 
$f(X)$ has an $\alpha$-Fr\'echet distribution and $a_n=n^{1/\alpha}$ for some $\alpha>0$. Thus, there exists a constant $C>0$ such that
\[ 
 P\{f(X)\leq x\}=e^{-Cx^{-\alpha}}\quad\text{for all $x>0$.} 
 \]
This implies in particular that $f(X)$ has continuous distribution and by Lemma \ref{lem1}, 
\[ 
 P\bigl\{X_{k(n)}\in A\bigr\}=n\int_A e^{-(n-1)Cf(x)^{-\alpha}}\,P_X(dx). 
\]
Note also that $P_X\{f=0\} = P\{f(X) = 0\} = 0$. Thus, the mass of 
$P_X$ is concentrated on $\rr^d\setminus D$, where $D:= \{f=0\}$. Without loss of generality, in the rest of the proof, we shall consider all 
measures over $\rr^d\setminus D$.

By \eqref{eqp1} with $a_n=n^{1/\alpha}$ and using the homogeneity of $f$, we obtain that for all $n\geq 1$
\begin{equation}\label{e:thm_f-.5} 
 P\{X\in A\}=P\bigl\{n^{-1/\alpha}X_{k(n)}\in A\bigr\}=\int_A e^{-(1-n^{-1})Cf(x)^{-\alpha}}\,n\cdot P_{n^{-1/\alpha}X}(dx),
\end{equation}
for all measurable $A \subset \rr^d\setminus D.$ We will show that this implies
\begin{equation}\label{e:thm_f-1} 
 \mu_n(dx):= n\cdot P_{n^{-1/\alpha}X}(dx)\overset{v}{\longrightarrow}\mu(dx) \quad\text{as $n\to\infty$} 
\end{equation}
for some Radon measure $\mu$ on $\rd\setminus D$.

Indeed, \eqref{e:thm_f-.5} means that $g_n(x) := e^{-(1-n^{-1})Cf(x)^{-\alpha}}$ is the Radon-Nikodym derivative of $P_X$ with 
respect to $\mu_n$. Since $f(x)>0$, we have $g_n(x)>0$, for all $x\in\rr^d\setminus D$ and hence 
$\mu_n \ll P_X$. Thus, letting $h_n:= g_n^{-1} \equiv d \mu_n/d P_X$, we obtain
\[
  n\cdot P_{n^{-1/\alpha}X}(A) \equiv \mu_n(A) =\int_{A} h_n(x) P_X(dx).
\]
Observe that $h_n(x)=e^{(1-n^{-1}) C f(x)^{-\alpha}}$ converges to $h(x):= e^{Cf(x)^{-\alpha}},$ as $n\to\infty$, uniformly over all compacts in $\rr^d\setminus D$. Therefore,
by applying Lemma \ref{l:weak-convergence} with $\mu_n$, $h_n$ and $h$ as above to the trivial case $\nu_n \equiv \nu := P_X$, we obtain
\eqref{e:thm_f-1}, where in fact
\begin{equation}\label{e:thm_f-2} 
  \mu(A) = \int_{A} h(x) P_X(dx)  \equiv \int_A e^{C f(x)^{-\alpha} } P_X(dx).
\end{equation}

Relation \eqref{e:thm_f-1} means that $X \in RV_\alpha(\{n^{1/\alpha}\},D,\mu)$.  Furthermore, since 
$f(x)>0,$ for all $x\in\rr^d\setminus D$,  Relation \eqref{e:thm_f-2} is equivalent to
\begin{equation}\label{e:thm_f-3} 
 P\{ X \in A\} =\int_A e^{-Cf(x)^{-\alpha}}\,\mu(dx),
 \end{equation}
for all Borel sets $A\subset \rr^d\setminus D$, showing that $X$ has a $(f,\mu)$-extreme value law. Notice that
as in the proof of Theorem \ref{thm1}, the constant $C$ satisfies \eqref{eq6-C}. 
\end{proof}

\begin{defi}
Fix $f : \overline\rr^d \to [0,\infty]$ as in Theorem \ref{thm_f}. We say that a random vector belongs to the $f$-implicit domain of attraction 
of a (necessarily) $f$-implicit max-stable random vector $Y$, if there exist $a_n>0$ such that
\begin{equation}\label{eqp2}
a_n^{-1}X_{k(n)}\Longrightarrow Y\quad\text{as $n\to\infty$}
\end{equation}
where $k(n)$ is as in \eqref{e:k(n)-def} and $X_1,\dots,X_n$ are i.i.d. as $X$. We write $X\in\DOA_f(Y)$ in this case.
\end{defi}

\begin{theorem}\label{thm:DOA_char} Let $f : \overline\rr^d \to [0,\infty]$ be non-negative, continuous and $1$-homogeneous function, such that $f(x)<\infty,\ x\in \rr^d$.
Then, $X\in\DOA_f(Y)$ if and only if $X\in \RV_\alpha(\{f=0\},\mu)$ for some $\alpha>0$.
\end{theorem}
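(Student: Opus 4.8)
The plan is to prove both implications, leveraging the machinery already assembled. The backward direction is essentially Theorem~\ref{thm1}: if $X\in\RV_\alpha(\{f=0\},\mu)$, then because $f$ is continuous and $1$-homogeneous with $\{f=0\}=D$, Assumptions H, F, and C hold automatically (continuity of $f$ forces $\Disc(f)=\emptyset$, and $\{f>\epsilon\}$ is bounded away from $\{f=0\}$ by continuity), so Theorem~\ref{thm1} yields $a_n^{-1}X_{k(n)}\Rightarrow Y$ with $Y$ an $(f,\mu)$-implicit extreme value law, which by Relation~\eqref{e:imp-max-stab-motivation} is $f$-implicit max-stable. Hence $X\in\DOA_f(Y)$.

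For the forward direction, suppose $a_n^{-1}X_{k(n)}\Rightarrow Y$. First I would apply the continuous mapping theorem together with the homogeneity of $f$: since $f(X_{k(n)})=\max\{f(X_1),\dots,f(X_n)\}$ and $f$ is continuous on $\overline\rr^d$, we get $a_n^{-1}\max\{f(X_1),\dots,f(X_n)\}\Rightarrow f(Y)$ (taking care that $f(Y)$ is a.s. finite since, by Theorem~\ref{thm_f} applied to the limit, $Y$ lives in $\rr^d\setminus\{f=0\}$ and $f<\infty$ on $\rr^d$; alternatively one argues $Y$ is proper directly). This exhibits $f(X)$ as being in the classical max-domain of attraction of $f(Y)$; since $f(X)\ge 0$ and $f(Y)$ must be nondegenerate, classical univariate extreme value theory forces $f(Y)$ to be $\alpha$-Fr\'echet for some $\alpha>0$ and $a_n=n^{1/\alpha}L(n)$ for a slowly varying $L$, equivalently $nP(f(X)>a_nx)\to Cx^{-\alpha}$ for all $x>0$. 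The main work is then to upgrade this one-dimensional regular variation of $f(X)$ to genuine multivariate regular variation of $X$ on the cone $\overline\rr^d\setminus\{f=0\}$.

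To carry out this upgrade, I would go back to Lemma~\ref{lem1}. Since $f(X)$ has continuous distribution (being in a Fr\'echet domain of attraction forces $G$ to be continuous eventually, and a short argument removes atoms), Lemma~\ref{lem1} gives the exact formula $P_{X_{k(n)}}(dx)=nG(f(x))^{n-1}P_X(dx)$. Writing $G(f(a_nx))^{n-1}=(1-n^{-1}\cdot nP(f(X)>a_nf(x)))^{n-1}$ and using $nP(f(X)>a_nf(x))\to Cf(x)^{-\alpha}$ uniformly on compacts of $\overline\rr^d_D$ (via the uniform convergence of regularly varying functions, as in Lemma~\ref{l:f(X)-rv}(iii), since $f$ is bounded away from $0$ on such compacts by continuity), one finds that $P(a_n^{-1}X_{k(n)}\in A)=\int_A h_n\,d\nu_n$ where $h_n(x)=G(f(a_nx))^{n-1}\to e^{-Cf(x)^{-\alpha}}=:h(x)$ and $\nu_n(dx):=nP_{a_n^{-1}X}(dx)$. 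The hypothesis $a_n^{-1}X_{k(n)}\Rightarrow Y$ says the left-hand measures converge weakly to $P_Y$. Since $h_n\to h$ uniformly on compacts and $h$ is continuous and strictly positive on $\rr^d\setminus D$, I would invert this relation: $h_n^{-1}$ is the Radon--Nikodym density of $\nu_n$ with respect to $P_{X_{k(n)}}^{(a_n)}$ (the law of $a_n^{-1}X_{k(n)}$), and $h_n^{-1}\to h^{-1}=e^{Cf(x)^{-\alpha}}$ uniformly on compacts, so Lemma~\ref{l:weak-convergence} gives $\nu_n\stackrel{v}{\to}\mu$ with $\mu(dx)=e^{Cf(x)^{-\alpha}}P_Y(dx)$ on $\rr^d\setminus D$. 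This is exactly $X\in\RV_\alpha(\{n^{1/\alpha}\},\{f=0\},\mu)$, completing the proof.

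The main obstacle I anticipate is the inversion step and the associated mass-loss/tightness bookkeeping: one must ensure (i) that the limit $Y$ is genuinely supported on $\rr^d\setminus\{f=0\}$ with $f(Y)<\infty$ a.s., so that $h^{-1}$ is finite and the formula $\mu=h^{-1}P_Y$ makes sense (the pathology in Remark~\ref{rem:continuity-at-infinity} shows this can fail without continuity of $f$ on $\overline\rr^d$, so the hypothesis that $f$ is continuous on the compactified space is essential here), and (ii) that the vague convergence $\nu_n\to\mu$ obtained from Lemma~\ref{l:weak-convergence} holds on $\overline\rr^d_D$ and not merely on relatively compact subsets, i.e. no mass escapes to the infinite hyperplanes. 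Part (ii) follows because $h^{-1}$ is bounded on sets bounded away from $D$ (there $f$ is bounded away from $0$ by continuity) and $\nu_n$ restricted to any $\{f>\epsilon\}$ is controlled by $nP(f(X)>a_n\epsilon)\to C\epsilon^{-\alpha}<\infty$; combined with the weak convergence $P_{X_{k(n)}}^{(a_n)}\Rightarrow P_Y$ and the continuity of $f$, a standard Portmanteau argument on the continuity sets $\{f>\epsilon\}$ closes the gap. Once these two points are handled, everything else is a routine assembly of the earlier lemmas.
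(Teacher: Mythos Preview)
Your overall strategy matches the paper's proof closely: backward via Theorem~\ref{thm1}, forward via the continuous mapping theorem to get $f(X)$ in the Fr\'echet max-domain of attraction, then invert the density relation using Lemma~\ref{l:weak-convergence} to obtain $\nu_n\stackrel{v}{\to}\mu$ with $\mu(dx)=e^{Cf(x)^{-\alpha}}P_Y(dx)$. The restriction to compacts and the uniform convergence of $h_n^{-1}\to e^{Cf(\cdot)^{-\alpha}}$ are handled the same way in both arguments.

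There is, however, one genuine gap. You invoke the \emph{exact} formula $P_{X_{k(n)}}(dx)=nG(f(x))^{n-1}P_X(dx)$ from Lemma~\ref{lem1}, which requires $G$ to be continuous, and you justify this by asserting that ``being in a Fr\'echet domain of attraction forces $G$ to be continuous eventually.'' That claim is false: regular variation of the tail $1-G$ does not preclude atoms. For instance, a purely atomic law with $P(f(X)=2^k)=c\,2^{-k\alpha}$, $k\ge 0$, lies in the Fr\'echet domain of attraction but has $G$ discontinuous everywhere on its support. So your ``short argument removes atoms'' cannot work in general.

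The paper sidesteps this entirely by using the two-sided bound \eqref{eq1-new} from Lemma~\ref{lem1} rather than the exact formula. This yields $g_n^-(x)\le \frac{d\nu_n}{d\mu_n}(x)\le g_n^+(x)$ with $g_n^\pm(x)=P(a_n^{-1}f(X)\mathrel{<}\!\!/\!\!\le f(x))^{n-1}$, and \emph{both} bounds converge (uniformly on compacts) to the same continuous limit $e^{-Cf(x)^{-\alpha}}$, since the limit Fr\'echet law is atomless. The inversion then proceeds exactly as you describe. Replacing your continuity assumption on $G$ by this sandwich makes your argument complete and essentially identical to the paper's.
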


\begin{proof} ($\Leftarrow$): Theorem \ref{thm1} shows that if $X\in \RV_\alpha(\{f=0\},\nu)$, then $X\in\DOA_f(Y)$ and $Y$ has a $f$-implicit 
max-stable law by Theorem \ref{thm_f}. 

($\Rightarrow$): Assume now that \eqref{eqp2} holds. Then, by the continuous mapping theorem, we have 
$$%\begin{equation}\label{e:thm:DOA_char-1} 
  a_n^{-1} f(X_{k(n)})=a_n^{-1} \max\bigl\{f(X_1),\dots,f(X_n)\bigr\}\Longrightarrow f(Y)\quad\text{as $n\to\infty$.} 
$$% \end{equation}
 This shows that $f(X)$ belongs to the domain of attraction of the (necessarily) $\alpha$-Fr\'echet random variable $f(Y)$. 
 Thus, $a_n$ is regularly varying with index $1/\alpha$ and there exists a constant $C>0$ such that
\begin{equation}\label{eqp3}
 P\bigl\{a_n^{-1} f(X)\leq y\bigr\}^{n-1}\to e^{-Cy^{-\alpha}}\quad\text{as $n\to\infty$}
\end{equation}
uniformly in $y>0$. In view of Lemma \ref{lem1}, we then get
\begin{equation}\label{eqp4}
 \int_A g_n^{-} (x) \mu_n(dx) \le P\bigl\{a_n^{-1}X_{k(n)}\in A\bigr\}\le   \int_A g_n^{+} (x) \mu_n(dx),
\end{equation}
where $\mu_n(\cdot) := nP(a_n^{-1} X \in \cdot)$, and where
$$
 g_n^-(x)  =  P\bigl\{a_n^{-1} f(X)< f(x)\bigr\}^{n-1}\quad\mbox{ and }\quad  g_n^+(x)  =  P\bigl\{a_n^{-1} f(X)\leq f(x)\bigr\}^{n-1}.
$$
Notice that by \eqref{eqp3},
\begin{equation}\label{e:thm:DOA_char-2}
 g_n^\pm (x) \mathop{\longrightarrow}_{n\to\infty} g(x) := e^{-C f(x)^{-\alpha}},\ \ \mbox{ for all }x\in \rr^d\setminus D \equiv \rr^d \setminus\{f=0\}. 
\end{equation}
We will show that \eqref{eqp4} and \eqref{e:thm:DOA_char-2} imply
\begin{equation}\label{eqp5}
 \mu_n(dx) \equiv n\cdot P_{a_n^{-1} X}(dx)\overset{v}{\longrightarrow}\mu(dx),\ \ \mbox{ as }n\to\infty,
\end{equation}
for some Radon measure $\mu$ on $\rd\setminus\{f=0\}$. To this end, observe that it is enough to show that for all fixed compacts $K\subset 
\overline\rr^d\setminus\{f=0\}$, we have
\begin{equation}\label{eqp5.5}
  \mu_n( \cdot\cap K) \equiv n\cdot P_{a_n^{-1}X}(\cdot\cap K ) \overset{w}{\longrightarrow}\mu(\cdot\cap K),\ \ \mbox{ as }n\to\infty.
\end{equation}
Proceeding as in the proof of Theorem \ref{thm_f}, let $\nu_n(dx):= P\bigl\{a_n^{-1}X_{k(n)}\in dx \bigr\}$ and
$\nu:= P_Y$ be the laws of the left- and right-hand side in \eqref{eqp2}, respectively. Then, by
\eqref{eqp4}, we have
\begin{equation}\label{e:thm:DOA_char-2.1}
  g_n^{-} (x) \le \frac{d\nu_n}{d\mu_n} (x)  \le  g_n^+(x),\ \quad x\in \rr^d\setminus\{ f= 0\}.
\end{equation}
The continuity of $f$ over the compact $K$ implies $\inf_{x\in K } f(x)>0$. Thus, by Relation \eqref{e:thm:DOA_char-2} 
for all sufficiently large $n$, we have $\inf_{x\in K} g_n^{\pm}(x) >0$. 
This, in view of \eqref{e:thm:DOA_char-2.1}, shows that 
$\mu_n\vert_{K }\ll \nu_n\vert_{K}$, for all sufficiently large $n$ and hence
$$
 \mu_n(A\cap K ) = \int_{A\cap K} h_n(x) \nu_n(dx),\ \quad\ A\in {\mathcal B}(\rr^d\setminus\{f=0\}),
$$ 
where
$$
 \frac{1}{g_n^+(x) }\le    h_n(x) := \frac{d\mu_n}{d\nu_n}(x) \le \frac{1}{g_n^{-}(x)},\ \ x\in K.
 $$
By the uniformity of the convergence in \eqref{eqp3} and the continuity of $f$, we also have that the convergences in \eqref{e:thm:DOA_char-2} are
uniform over the compact $K$. This shows that $h_n$ converges to $h(x) := g^{-1}(x)= e^{C f(x)^{-\alpha}}$, uniformly in $x \in K$, as $n\to\infty$. 
Thus, Lemma \ref{l:weak-convergence} applied to the measures $\mu_n$, $\nu_n$ and $\nu = P_Y$, restricted to $K$, yields \eqref{eqp5.5}.
Since the choice of the compact $K$ was arbitrary, we obtain \eqref{eqp5}, where
$$%\begin{equation}\label{e:thm:DOA_char-3}
 \mu(A):= \int_A h(x) \nu(dx) \equiv \int_A e^{C f(x)^{-\alpha}} P_Y(dx).
$$%\end{equation}
Relation \eqref{eqp5} and the fact that $a_n$ is regularly varying with index $1/\alpha$ imply 
that $\mu(\lambda \cdot)=\lambda^{-\alpha}\mu(\cdot)$ for all $\lambda>0$ and that $X\in RV_\alpha(\{a_n\},\{f=0\},\mu)$. 
\end{proof}

\begin{remark} In view of Corollary \ref{c:thm1}, $f$-implicit max-stable laws (for continuous $f$) are regularly 
varying and belong to their own domain of implicit attraction, as expected.
\end{remark}

\begin{remark} The continuity assumption in Theorem \ref{thm_f} can be relaxed.  Note that the continuity of $f$
is not used in the proof of the `only if' part and it is only used in the `if' part to justify the application of Theorem \ref{thm1}.
Therefore, one can merely suppose that $f$ satisfies the assumptions of the last theorem.
The continuity assumption in Theorem \ref{thm:DOA_char} can be similarly relaxed.
\end{remark}

\section{Implicit Order Statistics}\label{sec:o-stat}

 In this brief section we study the natural counterpart of order statistics relative to a given loss function $f$.
 Namely, suppose that $X_i,\ i=1,\ldots,n$ are independent copies of a vector $X$. Consider the order 
 statistics of the scalar sample of losses $\xi_i:= f(X_i),\ i =1,\ldots,n$. That is, let 
 $\{k(1;n),\cdots,k(n;n)\}$ be a permutation of $\{1,\ldots,n\}$ such that
 $$
 \xi_{k(1;n)} \equiv f(X_{k(1;n)}) \ge \xi_{k(2;n)}\equiv f(X_{k(2;n)}) \ge \cdots \ge \xi_{k(n;n)} \equiv f(X_{k(n;n)}).
 $$
 where, by convention, possible ties among the $\xi_i$'s are resolved by taking the 
 indices $k(\cdot;n)$ in an increasing order.
 We shall refer to $X_{k(i;n)},\ i=1,\ldots,n$ as to the {\em implicit order statistics} relative to the loss $f$.
 Observe that $X_{k(1;n)} \equiv X_{k(n)}$ is the {\em implicit maximum} defined in \eqref{e:X-k(n)-def} above.
 
 We will establish the asymptotic behavior of the implicit order statistics for homogeneous losses and regularly varying 
 $X$.  To this end, it is convenient to consider polar coordinates generated by the loss function.  Specifically, let
 $f:\overline \rr^d\to[0,\infty]$ be a continuous homogeneous loss function, such that
 $f(x) <\infty$ for all $x\in \rr^d$.  Let also $\nu$ be a Radon measure on $\overline \rr^d\setminus\{f=0\}$, such 
 that $\nu(\overline \rr^d\setminus \rr^d) = 0$ and
 $$
 \nu(\lambda \cdot) = \lambda ^{-\alpha} \nu(\cdot),\ \ \mbox{ for all }\lambda>0,
 $$
 with some exponent $\alpha>0$.
 
 Consider the {\em polar coordinates} $(\tau,\theta)(x):= (f(x), x/f(x))$, for $ x\in \rr^d\setminus\{f=0\}$.
By Fact \ref{fact:disintegration}, the measure $\nu$ satisfies the disintegration formula 
\eqref{e:fact:disintegration}, with spectral measure
$$
  \sigma_S(B):= \nu ((f,\theta) \in (1,\infty)\times B),
$$
on the (finite) unit sphere $S = \{ f = 1\}\cap \rr^d$.
 
 \begin{theorem}\label{thm:o-statistics} Let $f:\overline \rr^d\to[0,\infty]$ be a continuous homogeneous loss function, 
 such that $f(x) <\infty$ for all $x\in \rr^d$.  Suppose that $X\in RV_{\alpha}(a_n,\{f=0\},\nu)$ and 
 $X_i,\ i=1,\ldots,n$ are independent copies of $X$.\\
 
{\it (i)}  Consider the Point process ${\cal N}_n:= \{ a_n^{-1} X_i,\ i=1,\ldots,n\}$. Then, as $n\to\infty$
 \begin{equation}\label{e:thm:o-statistics-i}
  {\cal N}_n \cap \{f>0\} \Longrightarrow {\cal N},
 \end{equation}
 where ${\cal N}$ is a Poisson process on $\rr^d\setminus\{ f= 0\}$ with intensity $\nu$ and 
 `$\Rightarrow$' denotes weak convergence of probability distributions on the space of random 
 point measures equipped with the vague convergence topology.\\
 
{\it (ii)} Moreover, with $c:= \nu\{f>1\} = \sigma_S(S)$, we have
 \begin{equation}\label{e:thm:o-statistics-ii}
  {\cal N} \stackrel{d}{=} c^{1/\alpha} {\Big\{} \Gamma_k^{-1/\alpha} \Theta_k,\ k\in\N {\Big\}},
 \end{equation}
 where $1<\Gamma_1<\Gamma_2<\cdots$ is a standard Poisson process on $(0,\infty)$. 
 The $\Theta_k$'s are iid and independent of the $\Gamma_k$'s random variables taking values on 
the unit sphere $S$ and having distribution $\sigma_S(\cdot)/c.$\\

{\it (iii)} In particular, for all $m\in\N$, as $n\to\infty$,
\begin{equation}\label{e:thm:o-statistics-iii}
  \frac{1}{a_n} ( X_{k(i;n)},\ i=1,\ldots,m ) \Longrightarrow c^{1/\alpha} (\Gamma_i^{-1/\alpha} \Theta_i,\ i=1,\ldots,m).
\end{equation}
 \end{theorem}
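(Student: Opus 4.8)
The plan is to derive all three parts from a single point-process convergence: (i) is the classical equivalence between regular variation and Poisson convergence of the empirical measure, (ii) a change of variables applied to the limiting intensity, and (iii) the continuous mapping theorem for a ``top-$m$ atoms'' functional. \emph{Part (i).} Put $D:=\{f=0\}$ and work on the punctured space $\overline\rr^d_D=\overline\rr^d\setminus D$, which is locally compact, second countable and Hausdorff, with relative compactness described by Fact \ref{fact:compact}; since $f$ is continuous with $\{f=0\}=D$, for every $\epsilon>0$ the closure of $\{f>\epsilon\}$ lies in $\{f\ge\epsilon\}$ and is hence compact in $\overline\rr^d_D$. The hypothesis $X\in\RV_\alpha(a_n,D,\nu)$ is precisely $nP(a_n^{-1}X\in\cdot)\stackrel{v}{\to}\nu$. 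By the classical correspondence between regular variation and Poisson convergence of the empirical point process of i.i.d.\ points (see \cite{resnick:2007}, and \cite{lindskog:resnick:roy:2013} for the version on punctured Polish spaces), $\mathcal N_n$ converges weakly on $\overline\rr^d_D$ to a Poisson random measure with mean $\nu$. Since $\nu$ charges no infinite points, this limit lives on $\rr^d\setminus D$; intersecting with $\{f>0\}$ to discard any atoms landing on $D$ yields \eqref{e:thm:o-statistics-i}.

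\emph{Part (ii).} The map $T(x):=(\tau(x),\theta(x))=(f(x),x/f(x))$ is a homeomorphism of $\rr^d\setminus D$ onto $(0,\infty)\times S$, $S=\{f=1\}\cap\rr^d$. By the mapping theorem for Poisson processes, $\mathcal N\circ T^{-1}$ is a Poisson random measure on $(0,\infty)\times S$ with mean $\nu\circ T^{-1}$, which by the disintegration formula \eqref{e:fact:disintegration} equals $(\alpha\tau^{-\alpha-1}\,d\tau)\otimes\sigma_S$. Applying the further bijection $(\tau,\theta)\mapsto(r,\theta):=(\tau^{-\alpha},\theta)$, under which $\alpha\tau^{-\alpha-1}\,d\tau$ pushes forward to Lebesgue measure $dr$ on $(0,\infty)$, we obtain a Poisson random measure with mean $dr\otimes\sigma_S=c\,(dr\otimes\bar\sigma)$, where $c:=\sigma_S(S)=\nu\{f>1\}$ and $\bar\sigma:=\sigma_S/c$. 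Hence its radial coordinates form a rate-$c$ homogeneous Poisson process on $(0,\infty)$ carrying i.i.d.\ $\bar\sigma$-distributed marks; writing those radial points as $\Gamma_k/c$ with $\Gamma_1<\Gamma_2<\cdots$ the arrival times of a unit-rate Poisson process and the marks as $\Theta_k$, and transporting back via $r\mapsto\tau=r^{-1/\alpha}$ and $T^{-1}$, the atoms become $\tau_k\Theta_k=(\Gamma_k/c)^{-1/\alpha}\Theta_k=c^{1/\alpha}\Gamma_k^{-1/\alpha}\Theta_k$, which is \eqref{e:thm:o-statistics-ii}; as $f(\tau\theta)=\tau$ for $\theta\in S$, the ordering $\Gamma_1<\Gamma_2<\cdots$ is exactly the decreasing order of the atoms' $f$-values.

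\emph{Part (iii).} Every Radon point measure $\mu$ on $\rr^d\setminus D$ has finitely many atoms in each $\{f>t\}$, $t>0$ (as $\{f>t\}$ is relatively compact in $\overline\rr^d_D$); on those $\mu$ that moreover have at least $m$ atoms with pairwise distinct $f$-values, set $\Phi_m(\mu):=(x_{(1)},\dots,x_{(m)})$, the atoms listed in strictly decreasing order of $f$. A small enough vague perturbation of such a $\mu$ leaves the identity of its $m$ largest atoms unchanged, so $\Phi_m$ is continuous there. By part (ii), $\mathcal N$ a.s.\ has infinitely many atoms, with $f$-values the a.s.\ distinct numbers $c^{1/\alpha}\Gamma_k^{-1/\alpha}$; since also $E\,\mathcal N\{f=t\}=\nu\{f=t\}=0$ for each fixed $t>0$ (cf.\ the proof of Theorem \ref{thm1}), the convergence $\mathcal N_n\cap\{f>0\}\Rightarrow\mathcal N$ upgrades to joint convergence of the top-$m$ configurations. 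With probability tending to one at least $m$ of the $X_i$ satisfy $f(X_i)>0$, and then $\Phi_m(\mathcal N_n\cap\{f>0\})=a_n^{-1}(X_{k(1;n)},\dots,X_{k(m;n)})$; combining \eqref{e:thm:o-statistics-i}, \eqref{e:thm:o-statistics-ii} and the continuous mapping theorem yields \eqref{e:thm:o-statistics-iii}.

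\emph{Main obstacle.} Parts (i) and (ii) are routine once the punctured space $\overline\rr^d\setminus\{f=0\}$ and the $f$-generated polar coordinates are in place; the only real care there is to stop mass from escaping to the infinite boundary, which is exactly what the standing assumption $\nu(\overline\rr^d\setminus\rr^d)=0$ guarantees. The delicate step is (iii): the order-statistics functional $\Phi_m$ is genuinely discontinuous at point measures with tied $f$-values, with an atom sitting exactly on a limiting level set, or with mass accumulating near the deleted cone, so the argument hinges on pinning down its continuity set precisely and checking that the Poisson limit $\mathcal N$ avoids this exceptional set almost surely---this, together with the bookkeeping that upgrades vague convergence to convergence of the top-$m$ configuration, is where the genuine work lies.
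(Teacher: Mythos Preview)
Your proof is correct and follows essentially the same architecture as the paper: (i) the equivalence between regular variation on the cone and Poisson convergence of the empirical process, (ii) identification of the limiting intensity via the disintegration formula, and (iii) the continuous mapping theorem applied to a top-$m$ order-statistics functional, with continuity justified by the a.s.\ absence of ties in the $\Gamma_k$'s. The only cosmetic difference is in (ii): the paper verifies equality of intensities by computing void probabilities $P(\widetilde{\mathcal N}\cap A_{r,B}=\emptyset)$ on cylinder sets and matching them to $\exp\{-\nu(A_{r,B})\}$, whereas you invoke the Poisson mapping theorem under $T$ and the further change of variables $\tau\mapsto\tau^{-\alpha}$; both are standard and yield the same conclusion.
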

  
  \begin{proof} By Theorem 5.3 (i) on p.\ 138 in \cite{resnick:2007}, the fact that
  $X \in RV_\alpha(\{a_n\},D:=\{f=0\},\nu)$ is equivalent to \eqref{e:thm:o-statistics-i}.  This completes the proof
  of part {\it (i)}.
  
  Part {\it (ii)} follows readily from the disintegration formula \eqref{e:fact:disintegration}.  Indeed, let ${\cal N}$ denote
  the Poisson process on the right--hand side of \eqref{e:thm:o-statistics-ii} and let $\widetilde \nu$ be its 
  intensity. To prove \eqref{e:thm:o-statistics-ii}, it is enough to show that $\nu=\widetilde \nu$. 
    
    Let $T(x):= (f(x),\theta(x))$ and consider the rectangle sets $A_{r,B} =T^{-1}((r,\infty)\times B)$ for
   $r>0$ and measurable $B\subset S$. Since the class of such rectangle sets forms a $\pi$-system that
   generates the $\sigma$-algebra on $\rr^d\setminus D$, it is enough to show that 
   $\nu(A_{r,B})=\widetilde \nu(A_{r,B})$, for all $r>0$, and measurable $B\subset S$.
   
  Since $f$ is $1$-homogeneous and $f(\Theta_i) = 1$, we have that
  $$
   T(c^{1/\alpha} \Gamma_i^{-1/\alpha}\Theta_i) = \Big(c^{1/\alpha} \Gamma_i^{-1/\alpha},\Theta_i\Big).
  $$
  Therefore, for all $r>0$ and measurable $B\subset S$, for $A_{r,B} = T^{-1}((r,\infty)\times B)$, we have
  \begin{eqnarray}\label{e:thm:o-statistics-1}
   P ( \widetilde {\cal N} \cap A_{r,B} = \emptyset ) &=& 
   P\Big( \Big\{ i\in\N\, :\, c^{1/\alpha}\Gamma_i^{-1/\alpha} \in (r,\infty),\ 
   \Theta_i\in B\Big\} = \emptyset\Big)\nonumber\\ 
   &= &
    \sum_{n=0}^\infty P(\Theta_1\in B^c)^n 
    P(|\Pi \cap ( r,\infty)| = n),
    \end{eqnarray}
    where $\Pi$ denotes the Poisson process $\{c^{1/\alpha} \Gamma_i^{-1/\alpha},\ i\in \N\}$. 
    The latter equals
  \begin{eqnarray}\label{e:thm:o-statistics-2}
  \sum_{n=0}^\infty P(\Theta_1\in B^c)^n  \frac{(cr^{-\alpha})^n}{n!} e^{-c r^{-\alpha}}
  &=&e^{-cr^{-\alpha}} e^{cr^{-\alpha} P(\Theta_1\in B^c)} \nonumber\\
   e^{ - cr^{-\alpha} P(\Theta_1\in B)} &=& 
   \exp{\Big\{ } - c\int_r^\infty \frac{\alpha d\tau}{\tau^{\alpha+1}} \frac{1}{c}\sigma_S(B) {\Big\}}\nonumber\\
   & =& \exp{\Big\{ } -  \int_0^\infty\int_S 1_{A_{r,B}}(\tau \theta) \frac{\alpha d\tau}{\tau^{\alpha+1}}  
   \sigma_S(d\theta) {\Big\}}.
   \end{eqnarray}
   Since $P( \widetilde {\cal N} \cap A_{r,B} = \emptyset ) = \exp\{ -\widetilde\nu(A_{r,B})\}$, Relations 
   \eqref{e:thm:o-statistics-1} and \eqref{e:thm:o-statistics-2} imply that
   $$
   \widetilde \nu(A_{r,B}) = \int_0^\infty\int_S 1_{A_{r,B}}(\tau \theta) \frac{\alpha d\tau}{\tau^{\alpha+1}}  
   \sigma_S(d\theta).
   $$
   This, in view of the disintegration formula \eqref{e:fact:disintegration}, yields $\nu(A_{r,B})
   =\widetilde \nu(A_{r,B})$ and hence $\nu=\widetilde \nu$.
   
   We now prove part {\it (iii)}. Observe that the map $T \equiv (f,\theta) :\rr^d\setminus\{f=0\} \to (0,\infty)\times S$ is
   a homeomorphism.  Therefore, we can equivalently view the convergence in \eqref{e:thm:o-statistics-i} in 
   polar coordinates.  More precisely, by letting $F_{n,i} := f(a_n^{-1} X_i)$ and $\Theta_{n,i}:= \theta(a_n^{-1}X_i)$,
   the continuous mapping theorem applied to \eqref{e:thm:o-statistics-i}, yields
   \begin{equation}\label{e:thm:o-statistics-3}
    T({\cal N}_n\cap \{f>0\}) \equiv  \{ (F_{n,i},\Theta_{n,i}),\ i = 1,\ldots,n \} \cap (0,\infty)\times S \Longrightarrow
    \Big\{ \Big(c^{1/\alpha} \Gamma_i^{-1/\alpha}, \Theta_i\Big),\ i\in\N\Big\},
   \end{equation}
   as $n\to\infty$.  Note that $T(c^{1/\alpha}\Gamma_i^{-1/\alpha}\Theta_i) =  
   (c^{1/\alpha} \Gamma_i^{-1/\alpha}, \Theta_i)$.
   
   Now, given a point measure $\Pi_n:=\{(f_i,\theta_i),\ i=1,\ldots,n\}$ in $\rr^d\setminus\{f=0\}$,
   introduce the {\em order statistics}  map:
   $$
    G_m(\Pi_n):= \Big((f_{k(1;n)},\theta_{k(1;n)}),\cdots, (f_{k(m;n)},\theta_{k(m;n)}) \Big),
   $$
   where $f_{k(1;n)} \ge f_{k(2;n)} \ge \cdots \ge f_{k(m;n)}$ are the top order statistics of
   the sample $f_i,\ i=1,\ldots,n$ with ties resolved by taking the indices in an increasing order.
   If $n<m$, we formally let $G_m(\Pi_n) = ((1,\theta_0),\cdots,(1,\theta_0))$ for some fixed $\theta_0\in S$.
   
   Let $M_p(\rr^d\setminus\{f=0\})$ denote the space of locally finite point measures equipped with the
   vague convergence topology.  It is easy to show that the so-defined map 
   $G_m: M_p(\rr^d\setminus\{f=0\}) \to \Big( (0,\infty)\times S \Big)^m$ 
   is continuous on the range of the Poisson point process 
   $T({\cal N}) = \{(c^{1/\alpha}\Gamma_i^{-1/\alpha},\Theta_i),\ i\in\N\}$. This is because there are no ties
   among the $\Gamma_i$'s (with probability one) and moreover
   $$
   G_m({\cal N}) = \{ (c^{1/\alpha} \Gamma_i^{-1/\alpha}, \Theta_i),\ i=1,\ldots,m\}.
   $$
   The continuous mapping theorem applied to \eqref{e:thm:o-statistics-3} then implies 
   $G_m({\cal N}_n) \Rightarrow G_m({\cal N})$, as $n\to\infty$.  Since, as $n\to\infty$, with probability 
   converging to one, at least $m$ of the losses $f(X_i),\ i=1,\ldots,n$ are positive, we have
   $$
   P \Big( G_m ({\cal N}_n)  = \Big\{ \Big(f(a_{n}^{-1} X_{k(i;n)}), \theta( X_{k(i;n)})\Big),\ i=1,\ldots,m\Big\} \Big) \longrightarrow 
   1,\ \ \mbox{ as }n\to\infty.
   $$
   This implies
   \begin{eqnarray}
  & &    \Big\{ \Big(f(a_{n}^{-1} X_{k(i;n)}), \theta( X_{k(i;n)})\Big),\ i=1,\ldots,m\Big\} \nonumber\\
  & &  \quad \quad
   \Longrightarrow \{ (c^{1/\alpha} \Gamma_i^{-1/\alpha}, \Theta_i),\ i=1,\ldots,m\},\label{e:thm:o-statistics-4}
   \end{eqnarray}
   as $n\to\infty$, where the last convergence is in the sense of weak convergence of probability 
   distributions on $((0,\infty)\times S)^m$.  Another application of the continuous mapping theorem to 
   \eqref{e:thm:o-statistics-4} with the map $T^{-1}$ applied component-wise yields 
   \eqref{e:thm:o-statistics-iii} and the proof is complete.
   \end{proof}
 
\section{Examples}\label{sec:examples}

Let $f:\rr^d \to [0,\infty)$ be a continuous $1$-homogeneous function.  Suppose also that $f$ {\em extends}
to a continuous function $f:\overline \rr^d \to [0,\infty]$. This is a non--trivial requirement as shown in Remark 
 \ref{rem:continuity-at-infinity}. Letting $D:= \{f=0\}$, we then obtain that $\tau:=f$ and $\theta_f(x):=x/f(x)$ can 
 serve as {\em polar coordinates} in $\overline\rr^d\setminus D$. This, since $f(\theta) = 1$,
 simplifies the stochastic representation in Proposition \ref{p:thm1-structure} to
 $$
  Y \stackrel{d}{=} C^{1/\alpha} Z \Theta,\ \ \mbox{ with }C=\sigma_S(S),
 $$  
 where $\Theta \sim \sigma_S(\cdot)/\sigma_S(S)$. In particular, if 
 $C = \nu\{f>1\} = \sigma_S(S) =1,$
 we obtain 
 $$
  Y \stackrel{d}{=} Z \Theta.
 $$
 These laws will be referred to as {\em standard} $f$-implicit max-stable. They are obtained by simply rescaling an $f$-implicit max-stable vector with the constant $\nu\{f>1\}^{1/\alpha}$.

 \begin{example}[Pareto--Dirichlet implicit max--stable laws] \label{ex:Pareto-Dirichlet-continued} Consider Example \ref{ex:Pareto-Dirichlet} where
 $X = (1/U_i^{1/\alpha_i})_{i=1}^d$ is a vector of independent standard $\alpha_i-$Pareto components. As shown therein,
 we have $X \in RV_\alpha(\{n^{1/\alpha}\},D,\nu)$, where $\alpha= \sum_{i=1}^d$.  Let, as in that example, 
 $$
  f(x_1,\cdots,x_d) = {\Big(}\sum_{i=1}^d \frac{1}{x_i} {\Big)}^{-1}.
 $$
 Proposition \ref{p:thm1-structure} and the representation of the spectral measure imply that the {\em standard} $(f,\nu)$-implicit max-stable vector 
 $W$ has the following stochastic representation:
 \begin{equation}\label{e:PD-implicit-max-stable}
  W = Z \Theta \stackrel{d}{=} {Z/\xi}, 
 \end{equation}
 where $\xi = (\xi_1,\cdots,\xi_d)$ has the Dirichlet$(\alpha_1,\cdots,\alpha_d)$ distribution.  We shall refer to $W$ in \eqref{e:PD-implicit-max-stable} 
 as to a {\em Pareto--Dirichlet implicit max--stable} distribution.
 
 This discussion  suggests that that any other $(f,\nu)$-implicit max-stable law, with different homogeneous 
 function $f$ can be represented by {\em tilting} $W$ in \eqref{e:PD-implicit-max-stable}.
 
 \begin{prop} \label{p:Pareto-Dirichlet-tilting} Let $\nu$ be as in \eqref{e:Pareto-alpha-nu} 
 and let $Y$ be $(f,\nu)$-implicit max--stable. Then, for all bounded measaurable function $h$, we have
 \begin{equation}\label{e:p:Pareto-Dirichlet-tilting}
 \E h(Y) = c^{-\alpha} \E {\Big[} h( cZ  \Theta/f(\Theta) ) f^\alpha(\Theta) {\Big]},  
 \end{equation}
 where $c^\alpha =  \E f^{\alpha}(\Theta)$,  and where $Z$ and $\xi:=1/\Theta$ are independent standard $\alpha$-Fr\'eceht and Dirichlet$(\alpha_1,\cdots,\alpha_d)$,
 respectively.
 \end{prop}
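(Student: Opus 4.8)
The plan is to start from the stochastic representation of $Y$ supplied by Proposition \ref{p:thm1-structure} and then re-express the $\sigma_S$-expectations appearing there as Dirichlet expectations using the explicit form of the spectral measure computed in Example \ref{ex:Pareto-Dirichlet}. Recall that for the measure $\nu$ in \eqref{e:Pareto-alpha-nu} and the polar coordinates $\tau(x) = \|1/x_+\|_{\ell_1}^{-1}$, $\theta(x) = x/\tau(x)$ of \eqref{e:tau-Pareto}, Example \ref{ex:Pareto-Dirichlet} shows $\sigma_S(B) = c_{\{\alpha_i\}} P(\mathcal I(\xi) \in B)$ with $\xi \sim \mathrm{Dirichlet}(\alpha_1,\dots,\alpha_d)$ and $\mathcal I(x) = 1/x$; equivalently, for any nonnegative measurable $\varphi$ on $S$,
\begin{equation}\label{e:pd-tilt-aux}
 \int_S \varphi(\theta)\,\sigma_S(d\theta) = c_{\{\alpha_i\}}\,\E\bigl[\varphi(1/\xi)\bigr].
\end{equation}

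First I would invoke Proposition \ref{p:thm1-structure}: $Y \eqd Z\,\Theta'/g(\Theta')$, where $Z$ is standard $\alpha$-Fréchet, $g(\theta) = C^{-1/\alpha} f(\theta)$ with $C = \int_S f(\theta)^\alpha\,\sigma_S(d\theta)$ as in \eqref{eq6-C}, and $\Theta'$ has law $\sigma_g(d\theta) = g(\theta)^\alpha \sigma_S(d\theta) = C^{-1} f(\theta)^\alpha \sigma_S(d\theta)$ (a probability measure on $S$). Hence for bounded measurable $h$,
\begin{equation}\label{e:pd-tilt-step}
 \E h(Y) = \E\,h\!\left(Z\,\frac{\Theta'}{g(\Theta')}\right) = \int_S \E\,h\!\left(Z\,\frac{C^{1/\alpha}\theta}{f(\theta)}\right) \frac{f(\theta)^\alpha}{C}\,\sigma_S(d\theta),
\end{equation}
where the remaining expectation is over $Z$ only. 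Now apply \eqref{e:pd-tilt-aux} with $\varphi(\theta) = \E\,h\bigl(Z C^{1/\alpha}\theta/f(\theta)\bigr)\,f(\theta)^\alpha$ to turn the $\sigma_S$-integral into $c_{\{\alpha_i\}}\,\E$ over $\xi$, and set $\Theta := 1/\xi = \mathcal I(\xi)$ so that $\xi = 1/\Theta$. This yields $\E h(Y) = (c_{\{\alpha_i\}}/C)\,\E\bigl[h(c Z\,\Theta/f(\Theta))\,f(\Theta)^\alpha\bigr]$ with $c := C^{1/\alpha}$. It remains to identify the prefactor: again by \eqref{e:pd-tilt-aux}, $C = \int_S f(\theta)^\alpha\sigma_S(d\theta) = c_{\{\alpha_i\}}\,\E[f(1/\xi)^\alpha] = c_{\{\alpha_i\}}\,\E[f(\Theta)^\alpha]$, so $c_{\{\alpha_i\}}/C = 1/\E[f(\Theta)^\alpha]$, and since $c^\alpha = C = c_{\{\alpha_i\}}\E[f(\Theta)^\alpha]$ we also have $c^{-\alpha} = (c_{\{\alpha_i\}}\E[f(\Theta)^\alpha])^{-1}$...

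I should be careful here: the claimed identity is $\E h(Y) = c^{-\alpha}\E[h(cZ\Theta/f(\Theta))f^\alpha(\Theta)]$ with $c^\alpha = \E f^\alpha(\Theta)$, not with an extra $c_{\{\alpha_i\}}$. The resolution is simply that the proposition's $\Theta$ is the \emph{normalized} directional vector whose law is $\sigma_0 = \sigma_S/\sigma_S(S)$ rather than the lifted-Dirichlet $\mathcal I(\xi)$ — equivalently, one absorbs $c_{\{\alpha_i\}} = \sigma_S(S)$ into the normalization. Thus the main (and only real) obstacle is bookkeeping the constants: I must decide at the outset whether $\Theta$ is taken with law $\sigma_0$ (probability) or with the Dirichlet-lift law (total mass $c_{\{\alpha_i\}}$), and verify that with $\Theta \eqd \mathcal I(\xi)$, $\xi \sim \mathrm{Dirichlet}$, normalized appropriately, the substitution \eqref{e:pd-tilt-step}–\eqref{e:pd-tilt-aux} collapses to exactly $c^{-\alpha}\E[h(cZ\Theta/f(\Theta))f^\alpha(\Theta)]$ with $c^\alpha = \E f^\alpha(\Theta)$. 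Once the constant $C = \nu\{f>1\}$ from \eqref{eq6-C} is matched against $\E f^\alpha(\Theta)$ via \eqref{e:pd-tilt-aux} (giving $C = \sigma_S(S)\,\E f^\alpha(\Theta)$ when $\Theta\sim\sigma_0$, hence $c^\alpha = C/\sigma_S(S)$ in the standardized scaling, or the stated $c^\alpha = \E f^\alpha(\Theta)$ after absorbing $\sigma_S(S)=1$), the identity \eqref{e:p:Pareto-Dirichlet-tilting} follows, and a final density/monotone-class argument extends it from indicators of rectangles to all bounded measurable $h$.
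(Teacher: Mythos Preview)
Your approach is exactly the one the paper has in mind: the paper's entire proof reads ``The proof is an immediate consequence of Relation \eqref{e:PD-implicit-max-stable} and Proposition \ref{p:thm1-structure},'' i.e., precisely the two ingredients you invoke---the stochastic representation $Y\eqd Z\Theta'/g(\Theta')$ with $g=C^{-1/\alpha}f$ and $\Theta'\sim C^{-1}f^\alpha\sigma_S$, together with the Dirichlet identification $\sigma_S = c_{\{\alpha_i\}}\,P_{1/\xi}$ from Example \ref{ex:Pareto-Dirichlet}. Your derivation \eqref{e:pd-tilt-step}--\eqref{e:pd-tilt-aux} is the right way to spell this out.

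Your hesitation about the constants is well-placed and not a defect of your argument: tracing the calculation, the scaling factor appearing \emph{inside} $h$ is $C^{1/\alpha}$ with $C=\nu\{f>1\}=c_{\{\alpha_i\}}\,\E f^\alpha(\Theta)$, while the prefactor is $c_{\{\alpha_i\}}/C = 1/\E f^\alpha(\Theta)$. These two match the stated $c^{-\alpha}$ and $c$ (with $c^\alpha=\E f^\alpha(\Theta)$) only after the implicit normalization $\sigma_S(S)=c_{\{\alpha_i\}}=1$ alluded to in the remark following Corollary \ref{c:thm1}. The paper's one-line proof does not address this point either; it is a harmless scaling ambiguity in the statement rather than a gap in your reasoning. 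So your proof is complete once you simply note that the formula is stated under the convention $\sigma_S(S)=1$ (equivalently, after replacing $\nu$ by $c_{\{\alpha_i\}}^{-1}\nu$).
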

 
 The proof is an immediate consequence of Relation \eqref{e:PD-implicit-max-stable} and Proposition \ref{p:thm1-structure}. This result shows a type of change 
 of measure representation for general implicit max--stable laws that are regularly varying with exponent measure $\nu$. Unlike the classical case, where the
  spectral measure of a multivariate (explicit) max--stable law completely determines the distribution up to a scaling factor. The implicit max--stable laws depend 
  in a non--trivial way on the underlying function $f$.
 
 \begin{remark}
 Proposition \ref{p:Pareto-Dirichlet-tilting} can be used to efficiently simulate functionals of
 $(f,\nu)$-implicit max-stable distributions using {\em importance sampling}. 
 \end{remark}
 \end{example}
 
 %\begin{example}[Breiman-type] \fbox{ Maybe, maybe not...} \end{example}
 
 \begin{example}[Classic regular variation] Let $ X = (X^{(i)})_{i=1}^d \in RV_\alpha(\{a_n\},\{0\},\nu)$,
 i.e., we have regularly variation in the usual cone $\rr^d\setminus \{0\}$.  In this case, we also have
 $X \in RV_\alpha(\{a_n\}, D, \nu\vert_{\rr^d_D})$, for any cone $\rr^d_D$ such that $\nu(\rr^d_D)>0$.
 Thus, for any continuous homogeneous loss function $f$ such that $\nu(\{f>0\})>0$, 
 by Theorem \ref{thm1}, the implicit maxima of independent copies of $X$ converge to 
 a non-trivial $(f,\nu)$-implicit max-stable law.  The structure of these distributions depends on the loss and
 the spectral measure $\sigma$ of $\nu$. They can be readily expressed as shown in Proposition 
 \ref{p:thm1-structure}. Ultimately, a variety of implicit max--stable models, tailored to specific losses and
 applications can be developed. This is beyond the scope of the present work.
 
 In this example, we discuss the case of elliptical losses.  Since $X \in RV_\alpha(\{a_n\},\{0\},\nu)$,
 in this case, {\em any} norm $\rr^d$ leads to valid polar coordinates. Let for example $\tau(x):= \|x\|_2$ 
 be the Euclidean norm. Then, by standard $(\|\cdot\|_2,\nu)$-implicit max--stable law has the 
 following representation
 $$
  W = Z \Theta,
 $$
 where $\Theta$ has distribution $\sigma_0(\cdot) := \nu(\theta^{-1}(\cdot) \cap \{\tau>1\})/\nu(\{\tau>1\})$ on the unit
 Euclidean sphere $S$.
 
 By complete analogy with Proposition \ref{p:Pareto-Dirichlet-tilting}, Relation \eqref{e:p:Pareto-Dirichlet-tilting} holds,
 for any $(f,\nu)$-implicit max-stable vector $Y$. This allows us to simulate $Y$ through tilting. For example, one
 can determine the structure of all such laws where $f$ has {\em elliptical} contours. That is, suppose 
 $$
  f (x) = \psi( x^\top \Sigma x) =: \psi (\| x\|_{\Sigma}^2 ) ,
 $$
 where $\Sigma$ is a symmetric positive definite matrix and $\psi:[0,\infty)\to \rr$ is strictly monotone. 
 By Remark \ref{rem:composition}, Theorem \ref{thm1} applies to the continuous and $1-$homogeneous 
 function $(\psi^{-1}\circ f)^{1/2}(x) =  \|x\|_{\Sigma}^{1/2}$. Therefore the implicit extreme value laws $Y$ 
 corresponding to $f$ (equivalently, the $(f,\nu)$-implicit max-stable ones) have the representation
 $$
  \E  h(Y) = c^{-\alpha} \E {\Big[} h( cZ \Theta/\|\Theta\|_{\Sigma}) \|\Theta\|_{\Sigma}^{\alpha} {\Big]},
 $$
 where $c^{\alpha} = \E \|\Theta\|_\Sigma^\alpha$ and where $\Theta\sim \sigma_0$.

 Note that the distribution of $Y$, as expected, does not depend on $\psi$. Suppose for example 
 that $1/\psi(x) \, \propto\  \phi_{\Sigma}(x)$ is the density of centered multivariate Normal distribution 
 with covariance matrix $2\Sigma^{-1}$.  Suppose also that $X_i$ are as in Theorem \ref{thm1} and let 
  $$
   k(n):=  \mathop{\rm Argmax}_{i=1,\cdots,n} f(X_i) = \mathop{\rm Argmin}_{i=1,\cdots,n} \phi_{\Sigma}(x).
  $$
  In this case, the limit distribution of $a_n^{-1} X_{k(n)}$ describes the large--sample behavior of {\em novelties} relative
  to the Gaussian model $\phi_{\Sigma}(x)$ in the sense of Clifton\, {\em et al} \cite{clifton:hugueny:tarassenko:2011}. 
  
 \end{example}

\begin{example}[Gaussian copula] \label{ex:Gaussian} Let $Z = (Z_1, Z_2)^\top$ be bivariate Normal random vector
having standard Normal margins and correlation $\rho = \E (Z_1 Z_2) \in (-1,1)$.  
Let $\overline \Phi(z) = P(Z_1 >z),\ z\in \rr$ denote the complementary cdf of $Z_1$.  Consider the random 
vector
$$
 X = \Big(\frac{1}{\overline \Phi(Z_1)}, \frac{1}{\overline\Phi(Z_2)} \Big)^\top.
$$
Observe that $X$ has standard unit Pareto marginals and its dependence is determined by the 
Gaussian copula.  It is well known that the components of $X$ are asymptotically independent, or equivalently
that $X \in RV_1(\{0\},\nu)$, where the measure $\nu$ concentrates on the two positive axes. If one excises the axes and
considers regular variation in $(0,\infty)^2$, however, a finer {\em hidden regular variation} emerges.  
More precisely, letting $\rr^2\setminus D = (0,\infty)^2$, by Example 2.1 (p.\ 255) in Draisma {\em et al}
\cite{draisma:drees:ferreira:dehaan:2004}, we have that 
$$
 X \in RV_\alpha ( \{a_n\}, D, \nu),\ \ \mbox{ where } \alpha = \frac{2}{1+\rho}
 $$
 and for all $(x_1,x_2)\in (0,\infty)^2$, 
$$
 \nu((x_1,\infty)\times(x_2,\infty)) = (x_1x_2)^{-1/(1+\rho)}. 
$$
This shows that the random vector $X$ has the same regular variation behavior as in Example 
\ref{ex:Pareto-Dirichlet-continued} with $d=2$ and $\alpha_1=\alpha_2 = 1/(1+\rho)$. Therefore, 
with a homogeneous function $f(x_1,x_2) = (1/x_1 + 1/x_2)^{-1}$, for example, 
the $(f,\nu)$-implicit max-stable distribution attracting $X$ is of the form \eqref{e:PD-implicit-max-stable}. 
All results for Pareto--Dirichlet laws above apply in this particular setting.

\begin{remark} The derivation of the regular variation behavior on $(0,\infty)^d$ with $d=2$ 
for Gaussian copula with Pareto margins is rather technical.  To the best of our knowledge, the 
$d$-dimensional case $d>3$ remains open.
\end{remark}

\begin{remark}
The study of finer behavior of asymptotically independent variables was initiated with the seminal work of 
Ledford and Tawn \cite{ledford:tawn:1996} (see also 
\cite{heffernan:2000, heffernan:resnick:2007,fougeres:soulier:2010,dehaan:zhou:2011} 
among others).
\end{remark}

\end{example}
\section{Acknowledgements} The authors are grateful for inspiring discussions with Mark Meerschaert, 
Laurens de Haan, Cl\'ement Dombry and Gennady Samorodnitsky.  Section \ref{sec:o-stat} was motivated 
by a conversation with Gennady Samorodnitsky and Cl\'ement Dombry.  Example \ref{ex:Gaussian} 
was finished with the help of Laurens de Haan.

\appendix
\section{Some proofs and auxiliary Lemmas} \label{sec:appendix}

\begin{proof}[Proof of Fact \ref{fact:compact}]
 All open sets in  $\overline\rr^d_D$ are precisely of the type $V\setminus D$, where $V$ is open in $\overline \rr^d$.

$(\Leftarrow)$ Let $U_n := V_n \setminus D,\ n\in \N$ be an open cover of $F$ in $\overline \rr^d_D$, where the $V_n$-s are open
in $\overline \rr^d$. Observe that since $F$ is closed in $\overline \rr^d_D$, then $F = K\setminus D$, for some $K$ that is closed in 
$\overline\rr^d$. Since $D$ is also closed, $F\cup D = K\cup D$ is closed and hence compact in $\overline\rr^d$. Now, the fact that the open set $U$ covers
$D$, implies that $\{U,\ V_n,\ n\in \N\}$ is an open cover of the compact $K\cup D$ in $\overline \rr^d$. Thus, there exists a finite $N$, such that
$$
 F\cup D\equiv K\cup D  \subset U \cup \bigcup_{n=1}^N  V_n.
$$ 
This, since $F\cap U = \emptyset $ implies that $F\subset \cup_{n=1}^N V_n\setminus D \equiv \cup_{n=1}^N U_n$, which is 
a finite sub-cover of $F$ in $\overline \rr^d_D$, showing that $F$ is compact in $\overline \rr^d_D$. 

\smallskip
$(\Rightarrow)$ For all $\epsilon>0$, let $D_\epsilon := \{ x\in \overline \rr^d\, :\, \rho(x,D)\le \epsilon\}$, where 
$\rho(x,D)= \min_{y\in D} \rho(x,y)$ 
is the distance from $x$ to the compact $D$ in $\overline \rr^d$ with $\rho$ as in \eqref{e:rho}. Note that the sets $D_\epsilon\ (\epsilon>0)$ are closed and 
$D = \cap_{\epsilon>0} D_\epsilon$. Let $U_\epsilon:= \overline \rr^d\setminus D_\epsilon$ and observe that $\{U_\epsilon,\ \epsilon>0\}$ is
an open cover of $F$ in $\overline \rr^d_D$. Since $F$ is compact, it is also covered by a finite subset of $U_\epsilon$-s. Since the latter are nested, it 
follows that $F\subset U_{\epsilon_0}$, for some $\epsilon_0$. By taking $U:= \{ x\in \overline \rr^d\, :\, \rho(x,D)< \epsilon_0\}$, we obtain that  
$F \cap U$ and $D\subset U$, which shows that $F$ is bounded away from $D$.
\end{proof}

\begin{proof}[Proof of Proposition \ref{p:RV-polar}]

($\Rightarrow$) The continuity and homogeneity of $\tau$ imply 
that $\partial \{ \tau>t\}  = \{\tau = t\} = t\{\tau =1\},\ t>0$. Since $\nu\{\tau>\epsilon\} <\infty$ for any
$\epsilon>0$ and since the set $\{\tau>\epsilon\}$ equals the disjoint union 
$\cup_{t>\epsilon} \{\tau = t\} = \cup_{t>\epsilon} t\{\tau>1\} $, we obtain that $\nu(\{\tau =t\})=0,\ \forall t>0$, 
i.e.,  $\{\tau>t\}$ is a $\nu$-continuity set for all $t>0$. Thus, in view of the homogeneity of $\tau$ and the scaling
property of $\nu$, Relation \eqref{e:d:RV-D} implies that, for all $x>0$, as $n\to\infty$,
\begin{equation}\label{e:p:RV-polar-1}
  n P(a_n^{-1} \tau (X) >x ) = n P( a_n^{-1} X \in \{ \tau>x\} ) \longrightarrow \nu( \{\tau>x\}) \equiv \nu(\{\tau>1\}) x^{-\alpha}.
\end{equation}
We have moreover that the function $u\mapsto P(\tau(X)>u)$ varies regularly, 
with exponent $(-\alpha)$.  This follows from its 
monotonicity and Theorem 1.10.3 on p.\ 55 in \cite{bingham:goldie:teugels:1987}.

Introduce now the probability measures $Q_u(\cdot) := P(\theta(X) \in \cdot | \tau(X)>u),\ u>0$ on 
$(\overline S,{\cal B}(\overline S))$, where the measure is extended as zero on the infinite points in 
$\overline S \setminus S$, with $\overline S:= \{\tau = 1\}$. We will first show that $Q_{a_n} \stackrel{w}{\to} \sigma_0$, 
for some probability measure $\sigma_0$. Indeed, by \eqref{e:p:RV-polar-1}, as $n\to\infty$
\begin{eqnarray*}
 Q_{a_n} (B) &=& \frac{P( \theta(X) \in B, \tau(X)> a_n)}{P(\tau(X)>a_n)} \\
 &\sim& \frac{n P( a_n^{-1} X \in {T}^{-1} ( (1,\infty]\times B)  )}{\nu\{\tau>1\}} :=\mu_n({T}^{-1} ( (1,\infty]\times B)),
\end{eqnarray*}
where ${T}(x):=(\tau(x),\theta(x))$. Recall that ${T} : \overline \rr^d_D \to (0,\infty) \times \overline S$ is a 
homeomorphism (recall \eqref{e:T-def} and the discussion thereafter). Therefore,
for every open set $B \subset \overline S$ (in the relative topology), the set $A:= {T}^{-1}( (1,\infty)\times B) $ is open. Further, since $A\subset\{\tau\ge 1\}$, where
$\{\tau\ge 1\}$ is compact (in $\overline \rr^d_D$), 
the vague convergence in \eqref{e:d:RV-D} coincides with the weak convergence of finite measures 
restricted to the set $\{\tau\ge 1\}$.  Hence, by the Portmanteau characterization of weak convergence (see e.g.\ Theorem A 2.3.II on p.\ 391 in 
\cite{daley:vere-jones:2003}), Relation \eqref{e:d:RV-D} implies that, for all open sets $B \subset \overline S$, 
$$
 \liminf_{n\to\infty} Q_{a_n}(B) = \liminf_{n\to\infty} \mu_n( A ) \ge \frac{\nu(A)}{\nu\{\tau>1\}}  =:\sigma_0(B),
$$
Since the last relation is valid for all open sets $B \subset \overline S$, the Portmanteu theorem applied to the probability measures $Q_{a_n}$,
shows that $Q_{a_n}\stackrel{w}{\to} \sigma_0,$ as $n\to\infty$.

Let now $u_n\to\infty$ be arbitrary. We will show that $Q_{u_n}\stackrel{w}{\to} \sigma_0$. Let $a_n^*:= \inf_{m\ge n} a_m$ and
note that by Theorem 1.5.3 on p.\ 23 of \cite{bingham:goldie:teugels:1987}, we have $a_n^* \sim a_n,\ n\to\infty$. Further, the
fact that $u\mapsto P(\tau(X)>u)$ is regularly varying at infinity, implies that $P(\tau(X)>a_n^*) \sim P(\tau(X)> a_n)$ since the convergence
$nP(\tau(X)>a_n x)\to C x^{-\alpha}$ is uniform in $x$ on each fixed interval $[c,\infty)$, $c>0$ (cf Theorem 1.5.2 in \cite{bingham:goldie:teugels:1987}).
Since $a_n^*\uparrow \infty$, there exists an integer sequence $k_n\to\infty$, such that for all sufficiently large $n$,
$$
 a_{k_n}^* \le u_n < a_{k_{n}+1}^*.
$$
Hence, for any measurable $B\subset S$, we have
\begin{equation}\label{e:p:RV-polar-2}
 \frac{P(\theta(X) \in B, \tau (X) \ge a_{k_n+1}^* )}{ P(\tau(X)> a_{k_n}^*)}  < Q_{u_n}(B) \le  \frac{P(\theta(X) \in B, \tau (X) \ge a_{k_n}^* )}{ P(\tau(X)> a_{k_n+1}^*)}.
\end{equation}
By the fact that $m P(\tau(X)>a_m^*)\to C>0$, we get $P(\tau(X)> a_{k_n+1}^*) \sim  P(\tau(X)> a_{k_n}^*)$. This, since $Q_{a_n^*}\stackrel{w}{\to}\sigma_0$,
shows that the upper and lower bounds of $Q_{u_n}(B)$ in \eqref{e:p:RV-polar-2} converge to $\sigma_0(B)$, for all continuity sets $B$, which 
completes the proof of the `only if' part.  The fact that $\nu$ does not put charge on the infinite points implies
that $\sigma_0$ concentrates on $S\equiv \overline S\cap \rr^d$.

\medskip
($\Leftarrow$) Suppose now that \eqref{e:p:RV-polar-1} holds. Consider the semiring of subsets of $\overline \rr_D^d$,
$$
 {\cal R} := \{ {T}^{-1}((x,y] \times B)\, :\, 0<x<y< \infty,\ B \in {\cal B}(S)\},
$$
where ${T}(x) = (\tau,\theta)$ and ${\cal B}(S)$ is the class of Borel measurable sets in $S$. Define 
the $\sigma$-finite measure 
$$
 \nu( {T}^{-1}( (x,\infty)\times B) ) := C x^{-\alpha}\sigma_0(B),\ \ (x,\infty]\times B \in {\cal R}. 
$$
Since ${\cal R}$ is a $\pi$-system generating the Borel $\sigma$-algebra ${\cal B}(\overline \rr_D^d)$, the mapping
$\nu$ uniquely entends to a $\sigma$-finite measure on $(\rr_D^d,{\cal B}(\rr_D^d))$. We further extend $\nu$ 
to $(\overline \rr_D^d, {\cal B}(\rr_D^d))$ by defining as zero at infinity.

By \eqref{e:p-RV-polar}, we readily obtain that for all $A = {T}^{-1}((x,\infty)\times B)$, 
where $x>0$ and where $B\in {\cal B}(S)$ is a continuity set of $\sigma_0$, that 
$$
 nP( a_n^{-1} X \in A ) = P( \theta(X) \in B | a_n^{-1} \tau(X)>x) P(a_n^{-1} \tau(X)>x) \longrightarrow \nu( A),
$$
as $n\to\infty$. Thus, we also have that $n P(a_n^{-1} X \in A) \to \nu(A),\ n\to\infty$, for all sets in the semi ring 
${\cal R}$ such that $\theta(A)$ is a $\sigma_0$-continuity set.

To prove \eqref{e:d:RV-D}, since $\nu$ is supported on $\rr_D^d$, it is enough to show weak convergence of
the measures restricted to $\{\tau\in(\epsilon,\infty)\}$, for each $\epsilon>0$. Note however that the restriction of 
${\cal R}$ to $\{\tau\in (\epsilon,\infty)\}$ is a covering semi ring for 
the separable metric space $\{\tau\in (\epsilon,\infty)\}$. Therefore, ${\cal R}$ is a convergence determining class 
(cf Proposition A 2.3.IV on p.\ 393 in \cite{daley:vere-jones:2003}) and the already established weak convergence for 
${\cal R}$ implies the result.
\end{proof}

\medskip
The following result was used in the proof of Theorem \ref{thm1}. Consider a locally compact metric space $(E,\rho)$ with countable base equipped 
with its Borel $\sigma$-algebra. More precisely, we shall assume that all closed and $\rho$-bounded sets in $E$ are compact. Recall that a set $A\subset E$ is
$\rho$-bounded, if $A$ is contained in a ball $B(x,r) = \{y\in E\, :\, \rho(x,y) < r\}$, for some $x\in E$ and $r>0$. Borel measures that
are finite on all compacts are referred to as Radon.

\begin{lemma}\label{l:weak-convergence} Let $\nu_n$ and $\nu$ be Radon measures on $E$. Suppose that $h_n$ and $h$ are locally bounded
and non--negative measurable functions defined on $E$. Introduce the Radon measures
$$
 \mu_n(A):= \int_A h_n(x) \nu_n(dx)\ \ \mbox{ and }\ \ \mu(A):= \int_A h(x) \nu(dx),\ A\in {\mathcal B}(E).
$$
Suppose that $\nu(\overline{{\rm Disc}(h)}) = 0$ and that for every compact $K\subset E$, we have that
$\sup_{x\in K} |h_n(x) -h(x)| \to 0,\ n\to\infty,$
that is, $h_n$ converges to $h$ locally uniformly. 

Then, the convergence $\nu_{n}\stackrel{v}{\to} \nu,$  as $n\to\infty$ implies $\mu_n \stackrel{v}{\to} \mu$, as $n\to\infty.$
Furthermore, if $\mu_n$ are probability measures, then $\mu(E)\le 1$.
\end{lemma}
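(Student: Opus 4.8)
The goal is to verify the vague convergence $\mu_n\stackrel{v}{\to}\mu$, i.e.\ $\int g\,d\mu_n\to\int g\,d\mu$ for every continuous $g:E\to\rr$ with compact support; note first that $\mu_n$ and $\mu$ are indeed Radon, since $h_n,h$ are locally bounded and $\nu_n,\nu$ are Radon. Fix such a $g$, put $K:=\supp(g)$, and choose $\chi\in C_c(E)$ with $1_K\le\chi\le 1$. The plan is to split
\[
\int g\,d\mu_n-\int g\,d\mu=\int g\,(h_n-h)\,d\nu_n+\Bigl(\int gh\,d\nu_n-\int gh\,d\nu\Bigr)=:I_n+II_n
\]
and drive each term to $0$.

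For $I_n$: the vague convergence $\nu_n\stackrel{v}{\to}\nu$ gives $\nu_n(K)\le\int\chi\,d\nu_n\to\int\chi\,d\nu<\infty$, so $\sup_n\nu_n(K)<\infty$; since $h_n\to h$ uniformly on the compact $K$, this yields $|I_n|\le\|g\|_\infty\bigl(\sup_{x\in K}|h_n(x)-h(x)|\bigr)\nu_n(K)\to 0$. For $II_n$: since $g$ vanishes off $K$ and $\chi\equiv 1$ on $K$, we have $\int gh\,d\nu_n=\int gh\,d(\chi\nu_n)$, where $\chi\nu_n$ denotes the finite measure $A\mapsto\int_A\chi\,d\nu_n$ (finite because $\chi$ has compact support and $\nu_n$ is Radon), and likewise for $\nu$. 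These measures converge weakly, $\chi\nu_n\to\chi\nu$: for any bounded continuous $f$ on $E$ the product $f\chi$ lies in $C_c(E)$, hence $\int f\,d(\chi\nu_n)=\int f\chi\,d\nu_n\to\int f\chi\,d\nu=\int f\,d(\chi\nu)$, the case $f\equiv 1$ also giving convergence of total masses. Now $gh$ is bounded (by $\|g\|_\infty\sup_K h<\infty$), measurable, and its discontinuity set satisfies $\Disc(gh)\subseteq\Disc(h)$ because $g$ is continuous everywhere; hence $(\chi\nu)(\Disc(gh))\le\nu(\Disc(h))\le\nu(\overline{\Disc(h)})=0$. The extension of the Portmanteau theorem to bounded measurable functions whose discontinuity set is null, applied to the weakly convergent finite measures $\chi\nu_n\to\chi\nu$, then gives $\int gh\,d(\chi\nu_n)\to\int gh\,d(\chi\nu)$, i.e.\ $II_n\to 0$. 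Combining the two estimates yields $\int g\,d\mu_n\to\int g\,d\mu$, which is the asserted vague convergence.

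For the final assertion, assume each $\mu_n$ is a probability measure. For any compact $K\subset E$ choose $g\in C_c(E)$ with $1_K\le g\le 1$; then $\mu(K)\le\int g\,d\mu=\lim_n\int g\,d\mu_n\le\limsup_n\mu_n(E)=1$. Since $(E,\rho)$ is locally compact with countable base it is $\sigma$-compact --- concretely $E=\bigcup_{j\ge 1}\overline{B(x_0,j)}$ for any fixed $x_0\in E$, each $\overline{B(x_0,j)}$ being closed and bounded, hence compact --- so taking $K_j\uparrow E$ through compacts and using continuity of $\mu$ from below gives $\mu(E)=\lim_j\mu(K_j)\le 1$. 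The only genuinely delicate point is the term $II_n$: because $gh$ need not be continuous, one cannot invoke the definition of vague convergence directly but must pass to the auxiliary finite measures $\chi\nu_n$ and use the discontinuity-set form of the Portmanteau theorem, the key input being that $\Disc(gh)$ is $\nu$-null; the rest is routine.
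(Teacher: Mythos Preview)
Your proof is correct. The overall decomposition $I_n+II_n$ and the treatment of $I_n$ match the paper's argument (the paper bounds $\nu_n$ on a compact $\nu$-continuity neighborhood $K^\delta$, whereas you dominate by $\int\chi\,d\nu_n$; both yield $\sup_n\nu_n(K)<\infty$). The difference lies in the handling of $II_n$. The paper argues directly: it regularizes $h$ by multiplying with a continuous cutoff $\tau_\delta$ that vanishes on a $\delta$-neighborhood of $F:=\overline{\Disc(h)}\cap K$, so that $gh_\delta\in C_c(E)$ and the vague convergence applies immediately, while the remainder terms are controlled by $\nu_n(F^\delta)+\nu(F^\delta)\to 0$ as $\delta\downarrow 0$. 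You instead localize to the finite measures $\chi\nu_n$, establish their weak convergence (including total masses), and invoke the Portmanteau--continuous mapping theorem for bounded measurable integrands with $\nu$-null discontinuity set. Your route is shorter and cleaner, at the cost of importing a slightly nonstandard tool (Portmanteau for finite, not probability, measures---justified here since the masses converge). The paper's route is more self-contained and makes explicit how the hypothesis $\nu(\overline{\Disc(h)})=0$ is consumed via the shrinking neighborhoods $F^\delta$. Finally, you supply a proof of the ``Furthermore'' clause ($\mu(E)\le 1$) via $\sigma$-compactness, which the paper leaves implicit.
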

\begin{proof} We need to show that for all continuous functions $g$ with compact support, we have
$\int_E g d\mu_n \to \int_E g d\mu,$ as $n\to\infty,$ or equivalently,
$\int_E gh_n d\nu_n \to \int_E gh d\nu,$ as $n\to\infty.$
By the triangle inequality, we have that
\begin{equation}\label{e:l:weak-convergence-0}
 {\Big|} \int_E  gh_n d\nu_n - \int_E gh d\nu {\Big|}\le \int |g(h_n - h)| d \nu_n + {\Big|}\int g h d\nu_n - \int g hd\nu{\Big|} =: I_n +J_n.
\end{equation}
For the term $I_n$, for all $\delta>0$, we have
\begin{equation}\label{e:l:weak-convergence}
 I_n \le C \sup_{x\in K}|h_n(x) -h(x)| \nu_n(K) \le C \sup_{x\in K}|h_n(x) -h(x)| \nu_n(K^\delta),
\end{equation}
where $C = \sup_{x\in E} |g(x)|$, the compact support of $g$ is denoted by $K$, and $K^\delta := \{ y\in E\, :\, \rho(x,y)\le \delta\}$ is its closed $\delta$-neighborhood. Note that
the closed and bounded set $K^\delta$ is compact and hence $\nu(K^\delta)<\infty$.  Also, $K^{\delta_1}$ is contained in the interior of $K^{\delta_2}$, for all
$0<\delta_1<\delta_2$ and hence $\partial ( K^\delta)$ are disjoint for all $\delta>0$. This implies that $\nu(\partial K^\delta) = 0$, for all but countably many $\delta>0$,
since the Radon measure $\nu$ is $\sigma$-finite and has at most countably many atoms. Thus, $K^\delta$ is a continuity set of $\nu$, 
for some $\delta>0$. This yields  $\lim_{n\to\infty} \nu_n(K^\delta) = \nu(K^\delta)<\infty,$ and the right-hand side of \eqref{e:l:weak-convergence} vanishes, 
as $n\to\infty$.

{\em Now, we focus on $J_n$ in \eqref{e:l:weak-convergence-0}.} Let $\delta>0$ and define
\begin{equation}\label{e:tau-delta}
 h_\delta(x):= h(x) \tau_\delta(x),\ \ \mbox{ where } \tau_\delta(x):= \exp {\Big\{}  \frac{1}{\delta} - \frac{1}{\delta \wedge\rho(x,F) }{\Big\}},
\end{equation}
with $F := \overline{ {\rm Disc}(h)} \cap K$. The function $\tau_\delta$ is continuous, $|\tau_\delta|\le 1$ and it vanishes on the set $F$. Further, 
$\tau_\delta(x) = 1,$ if $\rho (x,F)\ge \delta$.  By the triangle inequality, we have
$$
J_n\le \int_E |g(h - h_\delta)| d\nu_n  + {\Big|} \int_E g h_\delta (d\nu_n - d\nu) {\Big|} +  \int_E |g(h - h_\delta)| d\nu =: J_{n,1} + J_{n,2} + J_{n,3}.
$$
Note that for each fixed $\delta>0$, the function $gh_\delta$ is continuous and has compact support. Therefore, 
the vague convergence $\nu_n\stackrel{v}{\to}\nu,\ n\to\infty$ implies $J_{n,2}\to 0,\ n\to\infty$. 
Now, by the local boundedness of $h$ and the fact that $\{h\not=h_\delta\}\subset F^\delta,$ we have
$$
J_{n,1} + J_{n,3} \le C \nu_n( F^\delta )  + C \nu(F^\delta),
$$ 
where $C = \sup_{x\in K} |g(x) h(x)|<\infty$ and $F^\delta = \overline{\{ \tau_\delta < 1 \}}$. As argued above, for all
$0<\delta_1<\delta_2,$ the set $F^{\delta_1}$ is contained in the interior of $F^{\delta_2}$. Thus, 
the $\sigma$-finiteness of $\nu$ implies that the $F^{\delta}$s are $\nu$-continuity sets, for all but 
countably many $\delta>0$.  Further, we have $\nu(F^{\delta}) \downarrow \nu(F) = 0,$ as $\delta\downarrow 0$ 
since $\nu(F^\delta)<\infty$ and $F^{\delta} \downarrow F, \delta\downarrow 0$. Thus,  for 
every $\epsilon>0$, we can pick $\delta>0$ such that $F^\delta$ is a $\nu$-continuity set and $\nu(F^{\delta}) < \epsilon/(3C)$. This ensures that 
$J_{n,3} < \epsilon/3$ and $\nu_n (F^{\delta} ) \to \nu(F^{\delta}),\ n\to\infty$, so that $J_{n,1}< \epsilon/2$, for all sufficiently large $n$.
This, since $\epsilon>0$ was arbitrary yields $J_{n,1}+ J_{n,2} \to 0,\ n\to\infty$, which completes the proof.
\end{proof}

%\alert{Formulate and prove the above lemma for boundedly finite measures and $w^{\#}$ convergence.}

\begin{lemma}\label{l:f(X)-rv} Let $X$ and $f$ be as in Assumptions RV$_\alpha(D,\nu)$ and H. If in addition,  $f$ and $\nu$ satisfy 
Assumptions F and C, then

{\it (i)} The sets $A_u:=\{ f>u\} \equiv u \{f>1\},\ u>0$ are $\nu$-continuity sets for all but countably many $u$-s.

{\it (ii)} With $a_n$ as in \eqref{e:d:RV-D}, for all $y>0$, we have
\begin{equation}\label{e:l:f(X)-rv-ii}
n P( f(X)> a_n y) \longrightarrow \nu(\{f>1\}) y^{-\alpha},\ \ \mbox{ as }n\to\infty.
\end{equation}

{\it (iii)} The function $y\mapsto P(f(X) > y)$ is regularly varying of exponent $-\alpha$ and hence the convergence in 
\eqref{e:l:f(X)-rv-ii} is uniform in $y$ on $[c,\infty)$, for any fixed $c>0$.
\end{lemma}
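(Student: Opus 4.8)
The plan is to treat the three parts in turn, in each case reducing everything to the vague convergence \eqref{e:d:RV-D} and the scaling property \eqref{e:nu-scaling} of $\nu$. For part (i), since $f$ is $1$-homogeneous we have $A_u = uA_1$, so it is enough to identify the ``bad'' levels $u$. I would first establish the inclusion (boundaries taken in $\overline\rr^d_D$)
\[
 \partial A_u \subseteq \{f=u\}\cup\overline{\operatorname{Disc}(f)},
\]
by noting that if $x\in\partial\{f>u\}$ and $f$ is continuous at $x$, then choosing points of $\{f>u\}$ and of its complement converging to $x$ forces $f(x)=u$. Assumption C gives $\nu(\overline{\operatorname{Disc}(f)})=0$, so it remains to control $\{f=u\}$. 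The sets $\{f=u\}$, $u>0$, are pairwise disjoint, and for $u>\epsilon$ each is contained in $\{f>\epsilon\}$, which has finite $\nu$-mass by Assumption F; hence at most countably many levels $u>\epsilon$ satisfy $\nu(\{f=u\})>0$. Letting $\epsilon\downarrow 0$ along a sequence, the union of all bad levels is countable, and for every other $u$ we get $\nu(\partial A_u)=0$.

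For part (ii), the homogeneity $f(a_n^{-1}x)=a_n^{-1}f(x)$ gives $nP(f(X)>a_ny)=nP(a_n^{-1}X\in A_y)=\nu_n(A_y)$ with $\nu_n:=nP_{a_n^{-1}X}$. By Assumption F the set $A_y$ is bounded away from $D$, hence relatively compact in $\overline\rr^d_D$ by Fact \ref{fact:compact}. For every $y$ outside the countable set of part (i), $A_y$ is moreover a $\nu$-continuity set, and squeezing $1_{A_y}$ between continuous, compactly supported functions (exactly as in the proof of Proposition \ref{p:RV-polar}) turns $\nu_n\stackrel{v}{\to}\nu$ into $\nu_n(A_y)\to\nu(A_y)$; the scaling property then gives $\nu(A_y)=\nu(yA_1)=y^{-\alpha}\nu(A_1)$. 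To remove the exceptional levels I would use monotonicity of $y\mapsto\nu_n(A_y)$: for arbitrary $y>0$ pick good levels $y_1<y<y_2$, so that $\nu_n(A_{y_2})\le\nu_n(A_y)\le\nu_n(A_{y_1})$; letting $n\to\infty$ and then $y_1,y_2\to y$, the continuity of $t\mapsto t^{-\alpha}$ yields $\nu_n(A_y)\to y^{-\alpha}\nu(A_1)$ for all $y>0$, which is \eqref{e:l:f(X)-rv-ii} with $\nu(\{f>1\})=\nu(A_1)$.

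For part (iii), write $\overline G(t):=P(f(X)>t)$, non-increasing, and put $C:=\nu(A_1)$; note $C>0$ because $\nu$ is non-trivial, $\{f=0\}=D$ by Remark \ref{rem:f-and-D}, and $\nu$ scales. From part (ii) together with the regular variation of $\{a_n\}$ with positive index $1/\alpha$ (so that $a_{n+1}/a_n\to 1$), choosing for each large $t$ the index $n=n(t)$ with $a_n\le t<a_{n+1}$ and using monotonicity of $\overline G$, a standard two-sided squeeze gives $\overline G(ty)/\overline G(t)\to y^{-\alpha}$ for every $y>0$, i.e. $\overline G\in\RV_{-\alpha}$ (this is the argument already used for $P(\tau(X)>u)$ in the proof of Proposition \ref{p:RV-polar}, via \cite{bingham:goldie:teugels:1987}). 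Finally, the uniform convergence theorem for monotone regularly varying functions (Theorem 1.5.2 in \cite{bingham:goldie:teugels:1987}) upgrades $\overline G(a_ny)/\overline G(a_n)\to y^{-\alpha}$ to uniform convergence in $y$ on $[c,\infty)$; combined with $n\overline G(a_n)\to C$ from part (ii), this yields the claimed uniformity in \eqref{e:l:f(X)-rv-ii}. I expect the fiddliest point to be this transfer in part (iii) from the sequential statement of part (ii) to genuine regular variation of the tail $\overline G$ — making the squeeze precise and confirming $a_{n+1}/a_n\to1$ — though it is classical; the topological bookkeeping for $\partial A_u$ in part (i) also needs a little care.
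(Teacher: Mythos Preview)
Your argument is correct, and for parts {\it (ii)} and {\it (iii)} it is essentially identical to the paper's: vague convergence on $\nu$-continuity sets plus scaling, then monotonicity to fill in the exceptional levels, and finally Theorems 1.10.3 and 1.5.2 of \cite{bingham:goldie:teugels:1987} to pass from the sequential statement to genuine regular variation and local uniformity.

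The genuine difference is in part {\it (i)}. The paper does not use your direct inclusion
\[
 \partial A_u \subseteq \{f=u\}\cup \overline{{\rm Disc}(f)},
\]
but instead builds a continuous regularization $f_\delta(x)=f(x)\tau_\delta(x)$, where $\tau_\delta$ kills $f$ on a $\delta$-neighborhood $F_\delta$ of the (localized) discontinuity set $F$, obtains $\partial\{f>u\}\subset \partial\{f_\delta>u\}\cup \overline{F_\delta}$, uses continuity of $f_\delta$ on the relevant region to conclude $\partial\{f_\delta>u\}=\{f_\delta=u\}$, and then lets $\delta\downarrow 0$ along a sequence so that $\nu(\overline{F_\delta})\to 0$. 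Your route is considerably shorter: the observation that continuity of $f$ at a boundary point forces $f(x)=u$ gives the inclusion in one line, and then the disjointness of the level sets $\{f=u\}$ inside $\{f>\epsilon\}$ (finite $\nu$-measure by Assumption~F) plus Assumption~C finish the job. What the paper's regularization buys is perhaps a clearer separation of the ``continuous'' and ``discontinuous'' contributions to the boundary, and it avoids having to think about boundaries in $\overline\rr^d_D$ directly; but your elementary pointwise argument is entirely sound and arguably preferable here.
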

\begin{proof}  {\it (i):} We need to show that $\nu(\partial A_u) = \nu(\overline A_u \setminus \langle A_u \rangle) = 0$, for all but countably many $u$-s.
If $f$ is continuous, then $\partial A_u= \{f =u\}$ and these sets are disjoint for all $u>0$. Hence, for all $\epsilon>0$,
$\nu(A_{\epsilon}) = \cup_{u>\epsilon} \partial A_u$, which is finite by Assumption F. This shows that $A_u$-s are $\nu$-continuity sets 
for all but countably many $u>0$. When $f$ is discontinuous, however, $\partial\{f>u\}\not = \{f=u\}$ and this argument fails. The intuition 
is that jumps in the angular component in \eqref{e:f-polar} can lead to non-trivial overlaps between the boundaries $\partial A_u$ 
for entire ranges of $u$-s. The role of Assumption  C is to make such overlaps negligible in $\nu$-measure. We shall now make this precise.

Fix an arbitrary $\epsilon>0$. By Assumption F, there exists an open set $U \subset \overline\rr^d$, such that $D\subset U$ and $\{f>\epsilon\} \subset U^c
:= \overline\rr^d\setminus U$. Since $U^c$ is closed, we also have that $\overline{\{f>\epsilon\}}\subset U^c$. Note that the two disjoint 
sets $D$ and $U^c$ are compact in $\overline \rr^d$ and hence they can be separated. Namely,
$$
 \rho(D,U^c) := \inf_{x\in D, y\in U^c} \rho(x,y) =: \epsilon_0 >0,
$$
where $\rho$ is the metric in \eqref{e:rho}.  Consider the open $\epsilon_0/2$-neighborhood of $D$: 
$$
  D_{\epsilon_0/2} = \{x\in\overline \rr^d\, :\, \rho(x,D)<\epsilon_0/2\}
$$
and define the following compact in $\overline \rr^d\setminus D$
\begin{equation}\label{e:set-F}
F:=\overline{{\rm Disc}(f)} \cap D_{\epsilon_0/2}^c.
\end{equation}
Observe that since $\{f>\epsilon\}\subset U^c\subset D_{\epsilon_0/2}^c$, we have
\begin{equation}\label{e:set-inclusion-star}
 {\rm Disc}(f) \cap \{f>\epsilon\} \subset \overline{{\rm Disc}(f)} \cap U^c \subset F \subset \overline{{\rm Disc}(f)}.
\end{equation}
In particular, $\nu(F) = 0$, because of Assumption C.

The intuition behind the set $F$ is that it collects all discontinuity points of $f$ over the region $\overline{\{f>\epsilon\}}$, and 
therefore over the regions $\overline{\{f>u\}}$, for $u>\epsilon$. We shall regularize $f$ and 
replace it by a function that is continuous on $D_{\epsilon_0/2}^c \supset \overline{\{f>\epsilon\}}$ and coincides with $f$,
except for a small neighborhood of $F$. Letting the neighborhood shrink, we will arrive at the desired claim. Now, the details.
 
For each $\delta \in (0,\epsilon_0/2)$, define the function $\tau_\delta$ as in \eqref{e:tau-delta}.  Note that 
$$
 F_\delta := \{ x \in \rr^d\, :\, \rho(x,F) <\delta\} = \{\tau_\delta < 1\}
 $$ 
 is the open $\delta$-neighborhood of $F$.  Since $F$ is bounded away from $0$, so are $F_\delta$ for all sufficiently small $\delta>0$. 
 Thus, $\nu(F_\delta)<\infty,$ eventually, as $\delta\downarrow 0$, and hence $\nu(F_\delta) \downarrow  \nu(\cap_{\delta>0} F_\delta) \equiv \nu(F) =0$.

We are now ready to study $\nu(\partial \{ f>u\})$ for $u>\epsilon.$  Define the functions
$f_\delta(x):= f(x) \tau_\delta(x),\ \delta \in (0,\epsilon_0/2)$. By construction, $f(x) = f_\delta(x)$ for all $x\in F_\delta^c$ and thus,
$$
\{f>u\} = {\Big(}\{f_\delta>u\}\cap F_\delta^c {\Big)} \cup {\Big(}\{ f> u \} \cap F_\delta{\Big)},
$$
which implies
$$
 \partial\{f>u\}  \subset \partial {\Big(}\{f_\delta>u\}\cap F_\delta^c{\Big)} \cup \partial {\Big(}\{ f> u \} \cap F_\delta{\Big)}.
$$
Now, using the facts that $\partial(A\cap B) \subset \partial A \cup \partial B$ and $\partial ( F_\delta^c ) = \partial F_\delta \subset \overline {F_\delta}$, we obtain
\begin{equation}\label{e:l:f(X)-rv-1}
\partial  \{f>u\}  \subset \partial  \{f_\delta >u\}  \cup \overline {F_\delta}.
\end{equation}
We will show next that $\nu(\partial \{f_\delta>u\})=0$, for all but countably many $u>\epsilon$.
Since $\tau_\delta\le 1$, we have $f_\delta \le f$ and thus
$$
 \overline{\{ f_\delta > u\}} \subset \overline{\{ f >\epsilon\}},\ \mbox{ for all $u> \epsilon $.}
$$
This, in view of \eqref{e:set-inclusion-star}, implies that $\overline {\{ f_\delta > u\}} \subset D_{\epsilon_0/2}^c$, for all $u>\epsilon$.
By the construction of the set $F$ in \eqref{e:set-F}, however, the function $f_\delta$ is continuous 
on $D_{\epsilon_0/2}^c$, and hence $\partial \{f_\delta >u \} = \{f_\delta = u\},$ for all $u>\epsilon$.  Thus, as argued above, 
the fact that $\nu(\{f_\delta>\epsilon\})<\infty$ implies $\nu( \partial \{f_\delta>u\}) = 0$ for all but countably many $u>\epsilon$.
(The set of $u$-s may depend on the choice of $\delta$.)  

On the other hand, $\overline{F_\delta} \subset F_{2\delta}$ and as shown $\nu(F_{2\delta})\downarrow 0$ as 
$\delta\downarrow 0$. Thus, by taking a limit over a countable sequence $\delta_m\downarrow 0$, we see that 
the $\nu$-measure of the left-hand side in \eqref{e:l:f(X)-rv-1} vanishes  for all but countably many $u>\epsilon$. 
This completes the proof of part {\it (i)}.

\smallskip
{\em We now prove {\it (ii)}.} For all $y>0$, by the homogeneity of $f$ (Assumption H),
\begin{eqnarray*}
n P(f(X)> a_n y) &=& n P(X \in f^{-1}(a_ny,\infty)) \\
 &=& n P(X \in a_n\{ f> y\} ) =: n P(X \in a_n A_y). 
\end{eqnarray*}
Now, by the already established part {\it (i)}, all but countably many $A_y$-s are $\nu$-continuity sets. Thus, by \eqref{e:d:RV-D}, 
\begin{equation}\label{e:l:f(X)-rv-2}
 n P(a_n^{-1} X\in  A_y ) \to \nu(A_y ) \equiv  \nu(\{ f>1 \})  y^{-\alpha},\ \ \mbox{ as }n\to\infty,
\end{equation}
for all but countably many $y$-s. The monotonicity (in $y$) of the left-hand side in \eqref{e:l:f(X)-rv-2} and the continuity (in $y$)
of the limit, imply that Relation \eqref{e:l:f(X)-rv-ii} holds for all $y>0$. This completes the proof of part {\it (ii)}. 

\smallskip
{\it (iii):} Observe that the sequence $a_n$ in \eqref{e:d:RV-D} is regularly varying with exponent $1/\alpha$, 
\eqref{e:l:f(X)-rv-ii} holds for all $y>0$, and the function $u\mapsto P(f(X)>u)$ is monotone. Therefore, Theorem 1.10.3 on p.\ 55 in \cite{bingham:goldie:teugels:1987} applies 
and shows that $u\mapsto P(f(X)>u)$ is regularly varying, with index $-\alpha$.  By Theorem 1.5.2 on p.\ 22 in \cite{bingham:goldie:teugels:1987}
the convergence in \eqref{e:l:f(X)-rv-ii} is also uniform in $y$ on $[c,\infty)$, for all $c>0$.
 \end{proof}
 
 \medskip
 The following slight reformulation of Scheffe's Lemma is useful.
 
 \begin{lemma}[induced Scheffe's Lemma] \label{l:Scheffe} Let $(E,{\cal E},\mu)$ be a measure space and let $T:(E,{\cal E}) \to (F,{\cal F})$ 
 be an ${\cal E}|{\cal F}$-measurable mapping. Suppose that $p_n, p \in L^1(E,{\cal E},\mu)$ are probability 
 densities and define the probability measures $Q_n$ and $Q$ on $(F,{\cal F})$ as follows:
 $$
  Q_n(B) := \int_{E} 1_B(T(x)) p_n(x)\mu(dx)\ \mbox{ and }\ \ Q(B) := \int_{E} 1_B(T(x)) p(x)\mu(dx),\ \ B\in {\cal F}.
 $$
 If $p_n(x)\to p(x),\ n\to\infty$, $\mu$-a.e., then 
 $$
 \| Q_n - Q\|_{\rm tv} := \sup_{B\in {\cal F}} |Q_n(B) - Q(B)| \longrightarrow 0, \ \ \mbox{ as }n\to\infty.
 $$
 \end{lemma}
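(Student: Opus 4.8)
The plan is to reduce the claim to the classical Scheffe Lemma by observing that $Q_n$ and $Q$ are the images under $T$ of the probability measures $P_n(dx):=p_n(x)\,\mu(dx)$ and $P(dx):=p(x)\,\mu(dx)$ on $(E,\mathcal E)$, and that the total variation distance cannot increase under a measurable map.

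First I would record that for every $B\in\mathcal F$ one has $T^{-1}(B)\in\mathcal E$ and
\[
 Q_n(B)-Q(B)=\int_E 1_B(T(x))\,\bigl(p_n(x)-p(x)\bigr)\,\mu(dx)=P_n\bigl(T^{-1}B\bigr)-P\bigl(T^{-1}B\bigr).
\]
Taking absolute values and passing to the supremum over $B\in\mathcal F$ gives
\[
 \|Q_n-Q\|_{\rm tv}=\sup_{B\in\mathcal F}\bigl|P_n(T^{-1}B)-P(T^{-1}B)\bigr|\le\sup_{A\in\mathcal E}\bigl|P_n(A)-P(A)\bigr|=\|P_n-P\|_{\rm tv},
\]
because $\{T^{-1}B:B\in\mathcal F\}$ is a sub-collection of $\mathcal E$. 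Hence it is enough to show $\|P_n-P\|_{\rm tv}\to0$.

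Next I would invoke the standard identity $\|P_n-P\|_{\rm tv}=\tfrac12\int_E|p_n-p|\,d\mu$ (the supremum being attained on $A=\{p_n\ge p\}$) and run Scheffe's argument: the functions $(p_n-p)^-=\max(p-p_n,0)$ are dominated by the $\mu$-integrable function $p$ and tend to $0$ $\mu$-a.e., so dominated convergence yields $\int_E(p_n-p)^-\,d\mu\to0$; since $p_n$ and $p$ are probability densities, $\int_E(p_n-p)\,d\mu=1-1=0$ for every $n$, whence $\int_E(p_n-p)^+\,d\mu=\int_E(p_n-p)^-\,d\mu\to0$, and therefore $\int_E|p_n-p|\,d\mu\to0$. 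Combined with the reduction of the previous step, this proves the lemma.

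There is no genuine obstacle here, as every ingredient is classical; the only points requiring a little care are the direction of the inequality in the push-forward step (total variation is non-increasing under measurable maps) and the bookkeeping of masses — it is precisely the hypothesis that $p_n$ and $p$ are probability densities that makes the cancellation $\int_E(p_n-p)\,d\mu=0$ available, which is the crux of Scheffe's argument.
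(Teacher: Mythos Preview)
Your proof is correct and follows essentially the same route as the paper: bound $|Q_n(B)-Q(B)|$ by $\|p_n-p\|_{L^1(\mu)}$ and invoke the classical Scheff\'e lemma. The paper does this in one line via the inequality $|Q_n(B)-Q(B)|\le\int_E 1_B(T(x))\,|p_n-p|\,d\mu\le\|p_n-p\|_{L^1(\mu)}$ and then simply cites Scheff\'e, whereas you additionally recast the bound as a push-forward inequality and spell out the dominated-convergence argument behind Scheff\'e; this extra detail is fine but not needed.
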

 \begin{proof} Observe that that 
 $$
   |Q_n(B) - Q(B)| \le \int_{E} 1_{B}(T(x)) | p(x) - p_n(x) | \mu(dx) \le \|p_n - p\|_{L^1(\mu)}.
 $$
 The last bound vanishes by the classic version of Scheffe's lemma.
 \end{proof}

\bibliographystyle{plain}
\bibliography{../mst-bibfile}

\begin{thebibliography}{10}

\bibitem{bingham:goldie:teugels:1987}
N.~H. Bingham, C.~M. Goldie, and J.~L. Teugels.
\newblock {\em Regular Variation}.
\newblock Cambridge University Press, 1987.

\bibitem{clifton:hugueny:tarassenko:2011}
D.A.\ Clifton, S.\ Hugueny, and L.\ Tarassenko.
\newblock Novelty detection with multivariate extreme value statistics.
\newblock {\em Journal of Signal Processing Systems}, 65(3):371--389, 2011.

\bibitem{daley:vere-jones:2003}
D.~J. Daley and D.~Vere-Jones.
\newblock {\em An introduction to the theory of point processes. {V}ol. {I}}.
\newblock Probability and its Applications (New York). Springer-Verlag, New
  York, second edition, 2003.
\newblock Elementary theory and methods.

\bibitem{davis:mikosch:2008}
Richard~A. Davis and Thomas Mikosch.
\newblock Extreme value theory for space-time processes with heavy-tailed
  distributions.
\newblock {\em Stochastic Process. Appl.}, 118(4):560--584, 2008.

\bibitem{dehaan:zhou:2011}
Laurens de~Haan and Chen Zhou.
\newblock Extreme residual dependence for random vectors and processes.
\newblock {\em Adv. in Appl. Probab.}, 43(1):217--242, 2011.

\bibitem{dombry:ribatet:2014}
Cl\'ement Dombry and Mathieu Ribatet.
\newblock Functional regular variations, pareto processes and peaks over
  threshold.
\newblock Preprint., 2014.

\bibitem{draisma:drees:ferreira:dehaan:2004}
Gerrit Draisma, Holger Drees, Ana Ferreira, and Laurens de~Haan.
\newblock Bivariate tail estimation: dependence in asymptotic independence.
\newblock {\em Bernoulli}, 10(2):251--280, 2004.

\bibitem{fougeres:soulier:2010}
Anne-Laure Foug{\`e}res and Philippe Soulier.
\newblock Limit conditional distributions for bivariate vectors with polar
  representation.
\newblock {\em Stoch. Models}, 26(1):54--77, 2010.

\bibitem{heffernan:2000}
Janet~E. Heffernan.
\newblock A directory of coefficients of tail dependence.
\newblock {\em Extremes}, 3(3):279--290 (2001), 2000.

\bibitem{heffernan:resnick:2007}
Janet~E. Heffernan and Sidney~I. Resnick.
\newblock Limit laws for random vectors with an extreme component.
\newblock {\em Ann. Appl. Probab.}, 17(2):537--571, 2007.

\bibitem{floods:1959}
E.L. Hendricks.
\newblock Floods of january--february 1959 in {O}hio and adjacent states, 1959.
\newblock Geological Survey Water--Supply Paper 1750-A.

\bibitem{ledford:tawn:1996}
Anthony~W. Ledford and Jonathan~A. Tawn.
\newblock Statistics for near independence in multivariate extreme values.
\newblock {\em Biometrika}, 83(1):169--187, 1996.

\bibitem{lindskog:resnick:roy:2013}
F.\ Lindskog, S.I.\ Resnick, and J.\ Roy.
\newblock Regularly varying measures on metric spaces: {H}idden regular
  variation and hidden jumps.
\newblock Preprint, 2013.

\bibitem{resnick:maulik:2004}
Krishanu Maulik and Sidney Resnick.
\newblock Characterizations and examples of hidden regular variation.
\newblock {\em Extremes}, 7(1):31--67 (2005), 2004.

\bibitem{resnick:2007}
Sidney~I. Resnick.
\newblock {\em Heavy-tail phenomena}.
\newblock Springer Series in Operations Research and Financial Engineering.
  Springer, New York, 2007.
\newblock Probabilistic and statistical modeling.

\end{thebibliography}

\end{document}